\documentclass[11pt]{amsart}

 \usepackage{amsfonts,graphics,amsmath,amsthm,amsfonts,amscd,amssymb,amsmath,latexsym,multicol,
 mathrsfs}
\usepackage{epsfig,url}
\usepackage{flafter}
\usepackage{fancyhdr}
\usepackage{hyperref}
\hypersetup{colorlinks=true, linkcolor=black}
\usepackage{xcolor}
\usepackage{tikz}
\usetikzlibrary{graphs,graphs.standard}

 \usepackage[matrix, arrow]{xy}

\DeclareMathOperator{\Div}{Div}

\DeclareMathOperator{\mld}{mld}

\DeclareMathOperator{\Proj}{Proj}

\DeclareMathOperator{\Spec}{Spec}
\DeclareMathOperator{\Supp}{Supp}

 \numberwithin{equation}{subsection}
 \numberwithin{footnote}{subsection}

 \newtheorem{cor}[subsection]{Corollary}
 \newtheorem{lem}[subsection]{Lemma}
 \newtheorem{prop}[subsection]{Proposition}
 \newtheorem{thm}[subsection]{Theorem}

 \newtheorem{quest}[subsection]{Question}

{
\theoremstyle{upright}
\theoremstyle{definition}
 \newtheorem{defn}[subsection]{Definition}
 \newtheorem{exa}[subsection]{Example}
 
 \newtheorem{rem}[subsection]{Remark}

}

 \newcommand{\PP}{\mathbb P}
 \newcommand{\A}{\mathbb A}
 \newcommand{\Q}{\mathbb Q}
 \newcommand{\R}{\mathbb R}
 \newcommand{\Z}{\mathbb Z}
  \newcommand{\C}{\mathbb C}
 \newcommand{\bir}{\dashrightarrow}
\title{\large S\MakeLowercase{ingularities of rational maps: foundations and surfaces}}
\thanks{2020 MSC:
14E05, %— Rational and birational maps
14B05, %— Singularities in algebraic geometry
14J17, %— Singularities of surfaces or higher-dimensional varieties
14E30, %— Minimal model program (Mori theory, extremal rays)
14M25}  %— Toric varieties, Newton polyhedra, Okounkov bodies}

\author{\large C\MakeLowercase{aucher} B\MakeLowercase{irkar}}
\date{\today}
\begin{document}
\maketitle

\begin{abstract}
We develop a theory of singularities of rational maps, focusing on maps
$$
f\colon X\dashrightarrow \mathbb P^n,
$$
and using their polarised graphs and normalised polarised graphs even when the source $X$ is very singular. 
To measure singularities of the map at a point $x\in X$, i.e. how far it is from being regular, we introduce invariants including the normalised graph fibre degree $\delta_x(f)$, a generalised lc threshold $\lambda_x(f)$, and invariants measuring the singularities of the normalised graph itself. Numerous examples show that the resulting invariants measure genuinely different aspects of map singularities.

We investigate the surface case in detail. We relate the normalised graph fibre degree to multiplicity, prove a sharp threshold--degree inequality for klt surface germs, and develop a detailed theory of linear type maps on smooth and singular surfaces. In particular, we connect the existence of linear type maps to existence of smooth curves through the given point, and with local class groups and complement theory. 

We conclude with questions and future directions concerning higher dimensions, complements and boundedness, moduli, Cremona groups, commutative algebra, curve-counting theories, and positive characteristic.
\end{abstract}

\tableofcontents

%%%%%%%%%%%%%%%%%%%%%%%%
%%%%%%%%%%%%%%%%%%%%%%%%%%

\section{\bf Introduction}

 Unless stated otherwise, we work over an algebraically closed field $k$
of characteristic zero.

The purpose of this paper is to initiate a theory of singularities of rational
maps. Let
$$
f\colon X\bir Z
$$
be a rational map between algebraic varieties. We say that $f$ is
\emph{regular} at a point $x\in X$ if it is defined at $x$, and
\emph{singular} at $x$ otherwise. Thus the singular locus of $f$ is its
indeterminacy locus. Our aim is not only to determine where $f$ is singular,
but to measure how singular it is.

One aim of this paper is to illustrate that this leads to a genuinely
non-trivial and potentially deep theory. Already for maps from surfaces to the projective space, one
finds natural numerical and geometric invariants, thresholds, boundedness statements, classification of
the mildest singularities, and connections with multiplicity, local class
groups and complement theory.

A guiding principle throughout is that the theory should not be restricted
to maps from smooth varieties, or even from varieties with mild singularities. The singularities
of the source and the singularities of the map are different kinds of data.
A rational map from a smooth variety may be highly singular, while a map from
a badly singular variety may have very mild singularities. Accordingly, many
of the basic constructions and several of the main results below are
formulated for normal varieties or arbitrary normal surface germs. Stronger
hypotheses such as klt, rational, or smooth are imposed only where the
relevant birational tools require them.

{\textbf{\sffamily{Main objects.}}}
We mainly consider rational maps
$$
f\colon X\bir\mathbb P^n
$$
as this setting already exhibits a rich geometry.
Let
$$
Y=\Gamma_f\subset X\times\mathbb P^n
$$
be the graph of $f$, with its reduced structure, and let
$$
\pi\colon Y\to X,\qquad
g\colon Y\to\mathbb P^n
$$
be the two projections. The second projection determines the tautological
line bundle
$$
\mathcal O_Y(1):=g^*\mathcal O_{\mathbb P^n}(1).
$$
We call
$$
\xymatrix{
& Y,\mathcal O_Y(1) \ar[dl]_{\pi} \ar[dr]^{g} & \\
X && \mathbb P^n
}
$$
the \emph{polarised graph} of $f$. The polarisation is essential: the
birational morphism $Y\to X$ alone does not in general determine the
singularity of the rational map.

The graph is often non-normal, even when $X$ is smooth. Our main approach is
therefore to pass to its normalisation
$$
\nu\colon Y^\nu\to Y.
$$
Put
$$
p:=\pi\circ\nu,\qquad q:=g\circ\nu,
$$
and
$$
\mathcal O_{Y^\nu}(1)
:=
\nu^*\mathcal O_Y(1)
=
q^*\mathcal O_{\mathbb P^n}(1).
$$
This gives the \emph{normalised polarised graph}
$$
\xymatrix{
& Y^\nu,\mathcal O_{Y^\nu}(1) \ar[dl]_{p} \ar[dr]^{q} & \\
X && \mathbb P^n .
}
$$

Fix a closed point $x\in X$, and denote the scheme-theoretic fibre of $p$
over $x$ by
$$
Y^\nu_x.
$$
The basic local object attached to the singularity of $f$ at $x$ is the
polarised normalised graph fibre
$$
(Y^\nu_x,
\mathcal O_{Y^\nu}(1)|_{Y^\nu_x})\to \PP^n
$$
together with its natural morphism to $\mathbb P^n$. Thus one may study the
singularity of $f$ at $x$ at several levels: through the polarised graph, the
normalised polarised graph, or the polarised normalised graph fibre.

This point of view is closely related to the classical theory of ideals and
blowups. If $X$ is affine and $f=(f_0:\cdots:f_n)$ is represented by regular functions, 
 then the graph is the blowup of the corresponding ideal
$$
I=(f_0,\ldots,f_n).
$$
Thus the graph is governed by the Rees algebra of $I$. The advantage of the
polarised graph is that it is intrinsic to the rational map and remains
available at singular points where there may be no preferred base ideal.

{\textbf{\sffamily{Main invariants.}}}
We study three main kinds of invariants. The first comes directly from the
geometry of the polarised fibre. Its simplest numerical invariant is the
\emph{normalised graph fibre degree}
$$
\delta_x(f)
:=
\deg_{\mathcal O_{Y^\nu}(1)}Y^\nu_x.
$$
More refined information is contained in the Hilbert polynomial of
$$
\left(
Y^\nu_x,
\mathcal O_{Y^\nu}(1)|_{Y^\nu_x}
\right)
$$
and in the degree spectrum obtained from cycles supported on $Y^\nu_x$.

The second kind of invariant is given by thresholds. Assume that $X$ is klt
and $f^*H$ is $\mathbb Q$-Cartier near $x$, where
$H\subset\mathbb P^n$ is a hyperplane. Viewing $(X,tf^*H)$ as a
generalised pair whose nef part is induced by $tq^*H$, we define
$$
\lambda_x(f)
:=
\sup\{t\geq0\mid
(X,tf^*H)\text{ is generalised lc at }x\}.
$$
This is independent of the choice of the hyperplane $H$. At
a smooth point it agrees with the log canonical threshold of the primitive
local base ideal of $f$. We will also use generalised canonical thresholds, which are more closely related to classical birational geometry, in particular the Noether--Fano method.

A third measure comes from the singularities of the normalised graph itself.
When $K_{Y^\nu}$ is $\mathbb Q$-Cartier, we consider
$$
\theta_x(f)
:=
\inf\{a(E,Y^\nu,0)\mid
E\text{ is a prime divisor over }Y^\nu
\text{ whose centre on }X\text{ contains }x\}.
$$
Thus $\theta_x(f)\geq0$ precisely when $Y^\nu$ is lc over a neighbourhood of
$x$. For surfaces we also use numerical log discrepancies, so this approach
does not require us to assume in advance that $K_{Y^\nu}$ is
$\mathbb Q$-Cartier.

These invariants measure different aspects of the rational map.
The degree $\delta_x(f)$ measures the polarised fibre, $\lambda_x(f)$ measures
the interaction between the map and the singularities of $X$, while
$\theta_x(f)$ measures the intrinsic singularities created on the normalised
graph.

{\textbf{\sffamily{Main results.}}}
We now summarise some of the main results without giving full statements.

\medskip
\noindent
\emph{Fibre degree, multiplicity and thresholds.}
Let $X$ be a normal surface and let
$$
f\colon X\bir\mathbb P^n
$$
be singular at $x$. For a general hyperplane $H\subset\mathbb P^n$, we prove
$$
\delta_x(f)=\mu_xf^*H;
$$
see Theorem \ref{t-norm-graph-fibre-degree-mult-hyperplane-pullback}.
Thus the normalised graph fibre degree is exactly the Hilbert--Samuel
multiplicity of a general hyperplane pullback. When $f$ is represented by a
base ideal, this is closely related to its first polar multiplicity.

If $x\in X$ is klt, we prove the sharp inequality
$$
\delta_x(f)\lambda_x(f)\leq2;
$$
see Theorem \ref{t-lct-degree-inequality-for-rational-maps-on-surfaces}.
Hence a positive lower bound for the generalised lc threshold gives an upper
bound for the normalised graph fibre degree. The converse fails: there are
families with
$$
\delta_x(f)=2
$$
but $\lambda_x(f)$ tending to zero (Example \ref{exa-bnd-fib-degree-small-threshold}). The thresholds $\lambda_x(f)$ also
inherit the ACC from the ACC for generalised lc thresholds.

When $X$ is a smooth surface, there is a stronger equality. If
$\lambda_x^1(f)$ denotes the generalised $1$-lc threshold, then by Lemma \ref{l-rational-maps-mult=fib-degree=inverse-can-threshold},
$$
\mu_xf^*H
=
\delta_x(f)
=
\frac{1}{\lambda_x^1(f)}.
$$
Moreover, the degree spectrum records the corresponding thresholds appearing
after successive point blowups; see Theorem \ref{t-degree-spectrum-infinitely-near-thresholds}.

\medskip
\noindent
\emph{Singularities of the normalised graph.}
The numerical invariants above need not control the singularities of
$Y^\nu$. We construct maps from $\mathbb A^2$ for which
$$
\delta_o(f)=2,\qquad \lambda_o(f)=1,
$$
while $\theta_o(f)$ tends to zero where $o$ is the origin (Examples \ref{exa-map-degree-two-on-A2-Y-deep-klt} and \ref{exa-toric-map-degree-two-A2-Y-deep-klt}). We also give examples with smooth source
whose normalised graph is not lc (Example \ref{exa-map-degree-5-A2-non-lc-Y}).

In the opposite direction, the fibre degree imposes some boundedness on the
exceptional geometry. If $X$ is a smooth surface, $f$ is singular only at
$x$, and
$$
\delta_x(f)\leq d,
$$
then $Y^\nu$ is $(3d+1)$-bounded in the sense that, on its minimal
resolution, the self-intersections and the number of exceptional curves of
self-intersection at most $-3$ are uniformly controlled (Theorem \ref{t-bnd-singularity-negativity-normalised-graphs}).

Further examples show that a map can have the
mildest possible map singularity even when neither $X$ nor $Y^\nu$ is close
to being smooth (Example \ref{exa-toric-map-degree-one-X-Y-deep-sing}).

\medskip
\noindent
\emph{Linear type maps.}
The simplest map singularities are those with $\delta_x(f)=1$. 
We consider a slightly stronger type of map singularity, that is, those of
\emph{linear type}. We say that $f$ is linear type at $x$ if
$$
\left(
Y^\nu_x,
\mathcal O_{Y^\nu}(1)|_{Y^\nu_x}
\right)
\simeq
\left(
\mathbb P^l,
\mathcal O_{\mathbb P^l}(1)
\right)
$$
for some $l\geq0$. For a singular map from a surface, necessarily $l=1$.

For a map $f\colon X\bir \PP^n$ from a smooth surface, linear type admits a strikingly rigid
characterisation (Theorem \ref{t-linear-type-maps-smooth-surfaces}). In particular, assuming $f$ is singular at $x$
the following are equivalent:
\begin{itemize}
\item $f$ is linear type at $x$;
\item the graph $Y$ is normal with canonical singularities, $Y_x\simeq\PP^1$,
$\mathcal{O}_Y(1)|_{Y_x}\simeq\mathcal{O}_{\PP^1}(1)$;
\item the graph $Y$ is normal with canonical singularities, $-K_Y$ is ample over $X$, and
$
\deg\mathcal{O}_Y(1)|_{Y_x}=1;
$
\item $\lambda_x^1(f)=1$;
\item $f^*H$ is smooth at $x$ for a general hyperplane $H\subset\mathbb P^n$.
\end{itemize}

The theory becomes subtler when the source $x\in X$ is singular. If $x\in X$ is a
rational surface singularity and $f$ is singular at $x$, then
$$
f\text{ is linear type at }x
\quad\Longleftrightarrow\quad
\delta_x(f)=1;
$$
see Theorem \ref{t-degree-one-linear-type-rational-surfaces}.

For an arbitrary normal surface germ $x\in X$, with no rationality or lc assumption, we
prove that 
there exists $f\colon X\bir \PP^n$ singular at $x$ with $\delta_x(f)=1$
if and only if there exists a curve $D\subset X$ through $x$ which is smooth at
$x$ (Theorem \ref{t-degree-one-maps-smooth-curves}). Moreover, one can take the target to be $\mathbb P^1$.

On the other hand, every affine toric surface germ at its torus-fixed point admits a toric
singular linear type map to $\mathbb P^1$ (Theorem \ref{t-linear-type-maps-on-toric-surfaces} and Corollary \ref{c-toric-surfaces-admit-linear-type-map}). 

Linear type maps need not exist even on canonical surface singularities. Over
$k=\mathbb C$, for a singular canonical surface germ we prove
$$
x\in X\text{ admits a singular linear type map}
\quad\Longleftrightarrow\quad
\operatorname{Cl}(\mathcal O_{X,x})\neq0;
$$
see Corollary \ref{c-canonical-linear-type-local-class-group}.
Thus the existence of a linear type map can depend on the algebraic local
class group and need not be determined by the completed local singularity. 

We also further investigate linear type maps on $A$-type and $D$-type klt singularities and discuss connection with complement theory (see \ref{ss-A-type-klt-sing-lin-type} and \ref{ss-D-type-klt-sing-lin-type}).

{\textbf{\sffamily{Further developments.}}}
Taken together, these results illustrate that singularities
of rational maps support a substantial birational theory rather than merely
a reformulation of the theory of base loci. The theory has numerical,
valuation-theoretic and geometric invariants; mild classes admit classification; boundedness phenomena occur; and interactions with classical singularities,
ideals, class groups and complements appear. 

Many basic problems remain open, particularly in higher dimension. In the
final section we discuss boundedness and anti-canonical maps, restriction and
adjunction, composition and moduli, relations with ordinary singularities and
base ideals, and possible connections with Cremona groups, enumerative geometry, and other areas.

{\textbf{\sffamily{Structure of the paper.}}}
We finish by describing the organisation of the paper. Section 2 recalls
background on singularities of pairs, generalised pairs and rational
singularities. Section 3 develops polarised graphs, normalised graphs, base
ideals, Rees algebras and pullbacks of divisors. Section 4 discusses maps on
curves and multiplicity. Section 5 introduces the normalised graph fibre
degree and degree spectrum, proves the comparison with multiplicity, and
gives toric formulas and examples. Section 6 introduces the generalised lc
threshold and discusses ACC and explicit computations. Section 7 studies
the singularities of normalised graphs and presents several examples. Section 8 proves the generalised lc
threshold--degree inequality for klt surfaces. Section 9 develops the theory
for maps from smooth surfaces, including characterisation of linear type maps, and bounded negativity. Section 10 studies linear type maps
on singular surfaces treating toric, canonical and klt cases, and explores connections with other problems. Section 11 collects questions, future directions and
possible applications.

\section*{Acknowledgements}

This work was initiated in 2025 and developed over an extended period with support from a grant of Tsinghua University and a grant of the National Program of Overseas High Level Talent.
The main ideas, definitions, questions, and overall direction of this work were developed by the author. AI tools (ChatGPT and DeepSeek) were used during the preparation of the paper as technical assistants to help with checking calculations and arguments, developing details of proofs and examples using standard techniques, locating relevant references, and improving the exposition. All mathematical statements, proofs, and conclusions were verified by the author.

%%%%%%%%%%%%%%%%%%%%%%%%%%%%%%
%%%%%%%%%%%%%%%%%%%%%%%%%%%%%%
\section{\bf Preliminaries}

We work over an algebraically closed field $k$ of characteristic zero unless stated otherwise. 

\subsection{Singularities of pairs}

We recall some standard terminology from birational geometry; see for example
\cite{Kollar-Mori}. A pair $(X,B)$ consists of a normal variety $X$ and an
$\mathbb R$-divisor $B\geq 0$ such that $K_X+B$ is $\mathbb R$-Cartier.
Let
$$
\phi\colon W\to X
$$
be a log resolution and write
$$
K_W+B_W=\phi^*(K_X+B).
$$
If $E$ is a prime divisor on $W$, its log discrepancy with respect to $(X,B)$ is
$$
a(E,X,B):=1-\mu_E B_W.
$$
More generally, this defines $a(E,X,B)$ for every prime divisor $E$ over $X$,
independently of the choice of resolution. We denote by $c_X(E)$ the centre of
$E$ on $X$.

We say that $(X,B)$ is $\epsilon$-log canonical ($\epsilon$-lc) if
$$
a(E,X,B)\geq \epsilon
$$
for every prime divisor $E$ over $X$. We say that $(X,B)$ is log canonical (lc)
if it is $0$-lc, Kawamata log terminal (klt) if it is $\epsilon$-lc for some
$\epsilon>0$, and canonical if it is $1$-lc. When $B=0$, we simply say that
$X$ is lc, klt, or canonical.

For a closed point $x\in X$, the minimal log discrepancy is
$$
\operatorname{mld}_x(X,B)
:=
\inf\{a(E,X,B)\mid C_X(E)=x\}.
$$

If $D\geq 0$ is an $\mathbb R$-Cartier divisor, its log canonical threshold at
$x$ with respect to $(X,B)$ is
$$
\operatorname{lct}_x(X,B,D)
:=
\sup\{t\geq 0\mid (X,B+tD)\text{ is lc near }x\}.
$$

\subsection{Generalised pairs}
We also use generalised pairs; for an introduction see \cite{Birkar-generalised-pairs}.
A \emph{generalised pair} $(X,B+M)$ consists of 
\begin{itemize}
\item a projective morphism $X\to Z$ of normal varieties, 

\item an $\R$-divisor $B\ge 0$ on $X$, and 

\item a birational contraction $\phi\colon X'\to X$ and an $\R$-divisor $M'$ on $X'$ which is nef over $Z$ 
\end{itemize}
such that $K_{X}+B+M$ is $\R$-Cartier where $M:= \phi_*M'$. 

Actually we specify $X',M'$ only up to birational transformations, that is, 
if we replace $X'$ with a resolution and replace $M'$ with its pullback, then the pair would be the same.
In other words, we view $M'$ as a so-called b-divisor which is determined by its 
trace on the model $X'$. When $X\to Z$ is the identity (which is the case mostly in this paper), we usually drop $Z$.

Now we define generalised singularities for a generalised pair $(X,B+M)$.
Replacing $X'$ we can assume $\phi$ is a log resolution of $(X,B)$. We can write 
$$
K_{X'}+B'+M'=\phi^*(K_{X}+B+M)
$$
for some uniquely determined $B'$. 
For a prime divisor $E$ on $X'$, the generalised log discrepancy is
$$
a(E,X,B+M):=1-\mu_E B'.
$$

The notions of generalised $\epsilon$-lc, lc, klt and canonical are defined in
the same way as above using these generalised log discrepancies.

If $D$ is an effective $\R$-divisor on $X$ and $N'$ is an $\R$-divisor that is nef over $Z$ so that $D+N$ is $\R$-Cartier where $N=\phi_*N'$, the generalised lc threshold of $D+N$ with respect to $(X,B+M)$ is 
$$
\sup\{t\geq 0\mid (X,B+M+tD+tN)\text{ is generalised lc}\}
$$
where the nef part of the pair in the deifinition is $M'+tN'$.

\subsection{Rational singularities}

A normal variety $X$ has rational singularities if for a resolution
$$
\phi\colon W\to X
$$
we have
$$
R^i\phi_*\mathcal{O}_W=0
$$
for every $i>0$. This condition is independent of the
choice of resolution (as we work in characteristic zero). We will use the standard fact that klt singularities are
rational; see \cite{Kollar-Mori}.

\subsection{Finite morphism fibrewise closed immersion}

\begin{lem}\label{lem-finite-morphism-fibrewise-closed-immersion}
Let
$$
\xymatrix{
Y \ar[r]^{\pi} \ar[dr] & Z \ar[d] \\
& T
}
$$
be a finite morphism of schemes over $T$. Assume that for every point
$t\in T$, the induced morphism on fibres
$$
\pi_t\colon Y_t\to Z_t
$$
is a closed immersion. Then $\pi\colon Y\to Z$ is a closed immersion.
\end{lem}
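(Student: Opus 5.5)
The plan is to reduce to commutative algebra and apply Nakayama's lemma to the cokernel of the map on structure sheaves. Being a closed immersion is local on the target $Z$, and a finite morphism is affine. So we may assume $Z=\Spec A$ and $Y=\Spec B$, with $\pi$ given by a ring map $\varphi\colon A\to B$ that makes $B$ a finitely generated $A$-module. It then suffices to show that $\varphi$ is surjective. The statement involves $T$ only through the fibres, so we do not need to make $T$ affine. For each point $t\in T$ with residue field $k(t)$, the fibres are $Z_t=Z\times_T\Spec k(t)$ and $Y_t=Y\times_T\Spec k(t)$. Over the open affine $\Spec A$ these are the spectra of $A_t:=A\otimes_{\mathcal O_{T,t}}k(t)$ and of $B\otimes_A A_t$, where we first restrict to an affine neighbourhood of $t$ if necessary.

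Let $C:=\operatorname{coker}(\varphi)$. Since $B$ is a finite $A$-module, $C$ is a finitely generated $A$-module. Tensor product is right exact, so
$$
C\otimes_A A_t=\operatorname{coker}(A_t\to B\otimes_A A_t).
$$
The hypothesis that $\pi_t$ is a closed immersion says exactly that $A_t\to B\otimes_AA_t$ is surjective. Hence $C\otimes_A A_t=0$ for every $t\in T$.

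Now take any prime $\mathfrak p\subset A$ and let $t\in T$ be its image. The residue field $k(\mathfrak p)$ is an $A_t$-algebra, because $\mathfrak p$ corresponds to a point of the fibre $Z_t$. Therefore
$$
C\otimes_A k(\mathfrak p)=(C\otimes_A A_t)\otimes_{A_t}k(\mathfrak p)=0.
$$
Since $C_{\mathfrak p}$ is a finitely generated $A_{\mathfrak p}$-module with $C_{\mathfrak p}/\mathfrak pC_{\mathfrak p}=0$, Nakayama's lemma gives $C_{\mathfrak p}=0$. As this holds for all primes $\mathfrak p$, we get $C=0$, so $\varphi$ is surjective and $\pi$ is a closed immersion.

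The only delicate point is bookkeeping: identifying the scheme-theoretic fibres with the base changes above and checking that every prime of $A$ lies in some fibre. No noetherian hypothesis is needed, because finiteness of $\pi$ is exactly what makes $C$ finitely generated, and that is what allows Nakayama's lemma to be applied.
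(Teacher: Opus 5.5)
Your proof is correct and is essentially the paper's argument: show that the cokernel of $\mathcal O_Z\to\pi_*\mathcal O_Y$ vanishes on each fibre $Z_t$ (your right-exactness step is the paper's ``formation of $\pi_*\mathcal O_Y$ commutes with base change''), then kill each stalk by Nakayama. Your observation that $k(\mathfrak p)$ is an $A_t$-algebra plays the role of the paper's inclusion $m_t\mathcal O_{Z,z}\subseteq m_z$.

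Two small points. First, $A\otimes_{\mathcal O_{T,t}}k(t)$ should be read as $A\otimes_R k(t)$, after shrinking so that $\Spec A$ maps into an affine open $\Spec R\subseteq T$. Second, your remark that Nakayama only needs finite generation of $C$, not coherence, slightly sharpens the paper's wording.
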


\begin{proof}
Since $\pi$ is finite, it is affine. Therefore $\pi$ is a closed immersion
if and only if the natural morphism
$$
\mathcal{O}_Z\longrightarrow \pi_*\mathcal{O}_Y
$$
is surjective.

Let
$$
\mathcal{C}
=
\operatorname{coker}
\left(
\mathcal{O}_Z\longrightarrow \pi_*\mathcal{O}_Y
\right).
$$
This is a coherent sheaf on $Z$. We claim that $\mathcal{C}=0$.

Since $\pi$ is finite, formation of $\pi_*\mathcal{O}_Y$ commutes with
arbitrary base change. Hence, for every $t\in T$, the restriction of
$$
\mathcal{O}_Z\longrightarrow \pi_*\mathcal{O}_Y
$$
to the fibre $Z_t$ is the natural morphism
$$
\mathcal{O}_{Z_t}
\longrightarrow
(\pi_t)_*\mathcal{O}_{Y_t}.
$$
By assumption, $\pi_t$ is a closed immersion, so this map is surjective.
Therefore
$$
\mathcal{C}|_{Z_t}=0
$$
for every $t\in T$.

Let $z\in Z$, and let $t\in T$ be its image. Then
$$
\mathcal{C}_z\otimes_{\mathcal{O}_{T,t}} k(t)=0.
$$
Equivalently,
$$
\mathcal{C}_z/{m}_t\mathcal{C}_z=0.
$$
Since
$$
{m}_t\mathcal{O}_{Z,z}\subseteq {m}_z,
$$
Nakayama's lemma implies
$$
\mathcal{C}_z=0.
$$
Thus $\mathcal{C}=0$, and so
$$
\mathcal{O}_Z\longrightarrow \pi_*\mathcal{O}_Y
$$
is surjective. Hence $\pi$ is a closed immersion.
\end{proof}

%%%%%%%%%%%%%%%%%%%%%%%%%%%%%%
%%%%%%%%%%%%%%%%%%%%%%%%%%%%%%
\section{\bf Graphs of rational maps and related topics}

In this section we collect background on rational maps, their graphs, linear systems, base ideals, Rees algebras and blowups, together with some basic examples used later. Much of this material is well known, although we occasionally work in settings less commonly discussed, for instance with singular source varieties where base ideals and divisors require additional care; see \cite{Hartshorne} and \cite{Lazarsfeld-Positivity-I} for general background. 

We introduce the polarised graph and normalised polarised graph as our main geometric objects for studying singularities of rational maps. In particular, we define maps of linear type, which will be a main focus in later sections.

%%%%%%%%%%%%%%%%%%%%%%%%%%%%%%
\subsection{The polarised graph}
Let
$$
f\colon X\bir \PP^n
$$
be a rational map from a reduced scheme, and let $U$ be the maximal open subset where $f$ is regular: we are assuming that $U$ is dense in $X$. Denote by
$$
\Gamma_U\subset U\times \PP^n
$$
the graph of the morphism $f|_U$. Then $\Gamma_U$ is a closed subset and $\Gamma_U\to U$ is an isomorphism. We define
$$
Y=\Gamma_f\subset X\times \PP^n
$$
to be the closure of $\Gamma_U$ with reduced structure. Let
$$
\pi\colon Y\to X,
\qquad
g\colon Y\to \PP^n
$$
be the induced projections. Then $\pi$ is projective and birational, and an isomorphism over $U$.
Moreover, $f$ is regular at a point $x\in X$ if and only if $\pi$ is an
isomorphism over some neighbourhood of $x$.

The morphism $g$ induces the line bundle
$$
\mathcal O_Y(1):=g^*\mathcal O_{\PP^n}(1).
$$
We package the data into a diagram
$$
\xymatrix{
& Y,\mathcal O_Y(1) \ar[dl]_{\pi} \ar[dr]^{g} & \\
X && \PP^n
}
$$
and call it the \emph{polarised graph} of $f$. 
The basic idea is that the complexity of this diagram and related diagrams below reflect the defect of $f$ to be a morphism. 

%%%%%%%%%%%%%%%%%%%%%%%%%%%%%%
\subsection{The normalised polarised graph}
The graph $Y$ is often non-normal, even when $X$ is smooth. It is helpful to pass to the normalisation
$$
\nu\colon Y^\nu\to Y.
$$
In our setting $\nu$ is a finite morphism as $Y$ is reduced and of finite type over the ground field $k$. Set
$$
p:=\pi\circ \nu\colon Y^\nu\to X,
\qquad
q:=g\circ \nu\colon Y^\nu\to \PP^n,
$$
and define
$$
\mathcal O_{Y^\nu}(1):=\nu^*\mathcal O_Y(1)=q^*\mathcal O_{\PP^n}(1).
$$
Then $p$ is projective and birational, and $q$ is a morphism. We package
this data into
$$
\xymatrix{
& Y^\nu,\mathcal O_{Y^\nu}(1) \ar[dl]_{p} \ar[dr]^{q} & \\
X && \PP^n
}
$$
and call it the \emph{normalised polarised graph} of $f$.

With $U$ as above, the restriction $p^{-1}U\to U$ is the
normalisation of $U$. In particular, if $X$ is normal, then $p$ is an
isomorphism over $U$; hence in the normal case the normalised graph differs
from the graph only over the indeterminacy locus of $f$. For non-normal
$X$, the normalised graph also contains the normalisation of the locus where
$f$ is already regular.

Note that from the two morphisms $p,q$ we can recover the polarised graph, so strictly speaking no information is lost by passing to $Y^\nu$. The advantage is that we can apply more geometric tools to the normalisation.

Unless stated otherwise, below we take $X$ to be a variety, hence reduced and irreducible.

%%%%%%%%%%%%%%%%%%%%%%%%%
\subsection{Linear type maps}
\label{ss-linear-type-map}
Let $f\colon X\bir\PP^n$ be a rational map, let $x\in X$ be a closed point, and let
$$
p\colon Y^\nu\to X
$$
be the normalised graph. We say that $f$ is \emph{linear type at $x$} if, for some $l\geq0$,
$$
\left(Y^\nu_x,\mathcal{O}_{Y^\nu}(1)|_{Y^\nu_x}\right)
\simeq
\left(\PP^l,\mathcal{O}_{\PP^l}(1)\right).
$$
Equivalently, the induced morphism
$$
Y^\nu_x\to\PP^n
$$
is a linear embedding.

For maps from surfaces, $l=0$ or $1$. The case $l=0$ is equivalent to $f$ being regular at $x$, while a singular linear type map has
$$
\left(Y^\nu_x,\mathcal{O}_{Y^\nu}(1)|_{Y^\nu_x}\right)
\simeq
\left(\PP^1,\mathcal{O}_{\PP^1}(1)\right).
$$

%%%%%%%%%%%%%%%%%%%%%%%%%%%%%%
\subsection{Base ideals}
Assume $X=\Spec A$ is an affine variety, and let $x\in X$ be a closed point.
We work locally near $x$. Let
$$
f\colon X\bir \PP^n
$$
be a rational map, written as
$$
f=(f_0:\cdots:f_n)
$$
with $f_i\in K(X)$. After shrinking $X$ around $x$ and multiplying all $f_i$
by a common denominator, we may assume that $f_i\in A$. These functions define
an ideal
$$
I=(f_0,\dots,f_n)\subset A.
$$
Then the following are equivalent: 
\begin{itemize}
\item the map $f$ is regular at $x$; 
\item the graph morphism $Y\to X$ is an isomorphism over a neighbourhood of $x$; 
\item the ideal $I$ is locally free near $x$, i.e. $I$ is principal in $\mathcal O_{X,x}$. 
\end{itemize}
We first observe that local freeness of $I$ is independent of the chosen
presentation. Suppose
$$
(f_0:\dots:f_n)
\qquad\text{and}\qquad
(g_0:\dots:g_n)
$$
are two presentations of the same rational map, with all $f_i,g_i$ regular on
$X$. Since $X$ is irreducible, there is a rational function $h\in K(X)$ such
that
$$
f_i=hg_i
$$
for every $i$. Write $h=c/e$ with $c,e\in A$. Then
$$
ef_i=cg_i
$$
for every $i$, and hence
$$
e(f_0,\dots,f_n)=c(g_0,\dots,g_n).
$$
After shrinking $X$ near $x$, we may assume $c$ and $e$ are non-zero. Since
$A$ is a domain, multiplication by a non-zero element identifies an ideal with
its multiple as an $A$-module. Therefore the two ideals
$$
(f_0,\dots,f_n),\qquad (g_0,\dots,g_n)
$$
are isomorphic as $A$-modules, after shrinking $X$ near $x$ if necessary. In
particular, local freeness is independent of the chosen presentation.

We now prove the equivalence with regularity. If $f$ is regular at $x$, then
near $x$ it admits a presentation
$$
f=(g_0:\dots:g_n)
$$
where $g_i\in \mathcal O_{X,x}$ generate the unit ideal. By the observation
above, the ideal $(f_0,\dots,f_n)$ is locally free at $x$ as
$g_0,\dots,g_n$ generate $\mathcal O_{X,x}$. Conversely, if
$I=(f_0,\dots,f_n)$ is locally free at $x$, then, since we are working locally,
we may write
$$
I=(h).
$$
Thus $f_i=hg_i$ for some $g_i\in \mathcal O_{X,x}$, and the $g_i$ generate the
unit ideal. Therefore
$$
(g_0:\dots:g_n)
$$
defines a morphism near $x$, and it agrees with $f$ as a rational map.
Hence $f$ is regular at $x$.

Thus the failure of $f$ to be regular at $x$ is measured by the failure of
the ideal $I$ to be locally free at $x$.

\begin{rem}[Smooth varieties]
Now assume $x$ is a smooth point of $X$, hence $\mathcal O_{X,x}$ is a UFD. In this case
we may divide the $f_i$ by their greatest common divisor. After doing so, the
presentation of $f$ at $x$ is unique up to multiplication by a unit in $\mathcal O_{X,x}$.
Thus the corresponding local base ideal is uniquely determined by the rational
map. Conversely, let $I\subset A$ be a non-zero ideal. Choosing generators
$f_0,\dots,f_n$ of $I$ defines a rational map
$$
X\bir \PP^n,
\qquad
x\mapsto (f_0(x):\cdots:f_n(x)).
$$
\end{rem}

\begin{rem}[Normal varieties]
A standard way to define a rational map from a normal variety to projective
space is to choose sections $s_0,\dots,s_n$ of a Weil divisor $L$. Equivalently,
one views these sections as rational functions $f_i\in K(X)$ satisfying
$$
\Div(f_i)+L\geq 0.
$$
If $x\in X$ is smooth, then $L$ is Cartier near $x$, and a local trivialisation
of $L$ identifies the sections $s_i$ with regular functions. These functions
define the base ideal discussed above, uniquely up to multiplication by a unit.
Thus, at a smooth point, the singularities of the rational map, the
singularities of the base ideal, and the base locus of the corresponding
linear system are the same object from different points of view.

At a singular point this identification is more delicate. A Weil divisor need
not be Cartier, so there is no canonical local trivialisation of $L$ and hence
no canonical base ideal defined by the sections. This is one reason for working
instead with the graph, or with the normalised polarised graph.
\end{rem}

%%%%%%%%%%%%%%%%%%%%%%%%%%%%%%
\subsection{Rees algebras and blowups}
Assume $X=\Spec A$ is an affine variety,
$$
f\colon X\bir \PP^n,\qquad f=(f_0:\cdots:f_n)
$$
is a rational map with $f_i\in A$, and consider the ideal
$$
I=(f_0,\dots,f_n)\subset A.
$$
The blowup of $X$ along $I$ is
$$
B_I X:=\Proj \bigoplus_{d\geq 0} I^d.
$$
The chosen generators give a surjection of graded $A$-algebras
$$
A[t_0,\dots,t_n]\longrightarrow \bigoplus_{d\geq 0} I^d
$$
sending $t_i$ to $f_i$, where $f_i$ is viewed as an element of $I$ in degree
one. Equivalently, one may regard the Rees algebra as
$\bigoplus_{d\geq 0} I^dT^d\subset A[T]$ and send $t_i$ to $f_iT$.
Hence we get a closed embedding
$$
B_I X=\Proj \bigoplus_{d\geq 0} I^d
\hookrightarrow
\Proj A[t_0,\dots,t_n]=X\times \PP^n.
$$
Under this embedding, $\mathcal O_{\PP^n_X}(1)$ restricts to the tautological
line bundle $\mathcal O_{B_I X}(1)$ of the blowup.

If $f$ is regular at a point $y$, then $I$ is locally free at $y$, hence
$B_I X\to X$ is an isomorphism over a neighbourhood of $y$. Therefore
$$
B_I X\to X
$$
is an isomorphism over the regular locus of $f$. Under the above embedding into
$X\times \PP^n$, the blowup agrees with the graph over this locus, and hence
it is the closure of the graph. Thus, fixing a closed point $x$ and shrinking $X$ around it if necessary, the diagram
$$
\xymatrix{
& B_I X,\mathcal O_{B_I X}(1) \ar[dl] \ar[dr] & \\
X && \PP^n
}
$$
coincides with the polarised graph of $f$.

%%%%%%%%%%%%%%%%%%%%%%%%%%%%%%%%%%%
\subsection{Free resolutions and blowups}
Let $X=\Spec A$, where $A$ is a regular domain of dimension two, and let
$I=(f,g)\subset A$. Assume that $f$ and $g$ have no common irreducible
component. Then we have the free resolution
$$
0\to A\to A^2\to I\to 0,
$$
where
$$
1\mapsto (-g,f),
\qquad
(a,b)\mapsto af+bg.
$$
Locally at a closed point where $I$ is not principal, this is the minimal free
resolution of $I$.

The chosen generators $f,g$ define a rational map
$$
X\bir \PP^1,
\qquad
y\mapsto (f(y):g(y)).
$$
They also give a closed embedding of the blowup:
$$
B_I X=\Proj \bigoplus_{d\geq 0} I^d
\hookrightarrow
X\times \PP^1.
$$
If $[u:v]$ are homogeneous coordinates on $\PP^1$, the image is cut out by
the equation
$$
gu-fv=0.
$$
Equivalently, $A\to A^2$ gives a section of
$\mathcal O_{X\times \PP^1}(1)$, and its zero divisor is precisely $B_I X$.

Thus, for a two-generated ideal on a smooth surface with coprime generators,
the minimal free resolution gives the equation of the blowup inside
$X\times \PP^1$. For ideals with more generators, a chosen surjection
$$
A^r\to I
$$
still gives an embedding
$$
B_I X\hookrightarrow X\times \PP^{r-1},
$$
but the equations of this embedding are no longer read off from a single
syzygy. They are governed by the Rees algebra of $I$, and the relation with a
minimal free resolution is more subtle.

%%%%%%%%%%%%%%%%%%%%%%%%%%%%%%
\subsection{Pullback of divisors via rational maps}

Let $X$ be a normal variety and let $f\colon X\bir Z$ be a rational map to a
projective variety. Let $D$ be a $\Q$-Cartier divisor on $Z$, and assume that
the generic image of $f$ is not contained in $\Supp D$. We define $f^*D$ as a
Weil $\Q$-divisor on $X$ by restricting $f$ to its regular locus, pulling back
$D$ there, and taking the closure in $X$. Equivalently, if
$\pi\colon W\to X$ is a normal model such that the induced map
$q\colon W\bir Z$ is a morphism, then
$$
f^*D=\pi_*q^*D.
$$
Even when $D$ is Cartier, the divisor $f^*D$ need not be Cartier or
$\Q$-Cartier.

Now let $f\colon X\bir \PP^n$ be given by rational functions
$f_0,\dots,f_n$, and let $H$ be the hyperplane 
$$
\sum_{i=0}^n a_i t_i=0
$$
not containing the image of $f$. Set
$$
L=\max_i\{-\Div(f_i)\},
$$
where the maximum is taken coefficient-wise. Thus $L$ is the smallest Weil
divisor such that
$$
L+\Div(f_i)\geq 0
$$
for all $i$. The functions $f_i$ define sections
$$
s_i\in H^0(X,\mathcal O_X(L)),
$$
where $\mathcal O_X(L)$ is understood as a reflexive sheaf, and
$$
\Div(s_i)=L+\Div(f_i).
$$
By the minimality of $L$, the effective divisors $L+\Div(f_i)$ have no common
component. Hence, on a big open set $U\subset X$, the sections $s_i$ are
base-point-free and define $f$. On $U$ we have
$$
(f|_U)^*\mathcal O_{\PP^n}(1)\simeq \mathcal O_U(L|_U),
$$
and therefore
$$
f^*H|_U
=
\Div (\sum_{i=0}^n a_i s_i)|_U
=
(L+\Div(\sum_{i=0}^n a_i f_i))|_U.
$$
Taking closures gives
$$
f^*H
=
L+\Div(\sum_{i=0}^n a_i f_i).
$$

Locally near a smooth, or locally factorial, point $x\in X$, one can multiply
the $f_i$ by a common rational function and then cancel their greatest common
divisor. For this primitive local presentation, $L=0$ near $x$, so the formula becomes
$$
f^*H=\Div(\sum_{i=0}^n a_i f_i).
$$

\begin{exa}
Let $X=\A^2$ with coordinates $u,v$, and let
$$
f\colon X\bir \PP^2, \ f=(u^3:u^2v:uv^2).
$$
For $H=\{t_0+t_1+t_2=0\}$, the naive expression
$u^3+u^2v+uv^2$ has the extra component $\{u=0\}$. Here
$$
L=\max\{-3\Div(u),-2\Div(u)-\Div(v),-\Div(u)-2\Div(v)\}
=-\Div(u).
$$
Thus
$$
f^*H
=
-\Div(u)+\Div(u^3+u^2v+uv^2)
=
\Div(u^2+uv+v^2).
$$
\end{exa}

%%%%%%%%%%%%%%%%%%%%%%%%%%%%%
\subsection{First examples}

We present a few basic examples of rational map singularities.

\begin{exa}[Normalisation]
Let $X$ be a variety, and let
$$
\nu\colon X^\nu\to X
$$
be its normalisation. Consider the birational inverse
$$
f\colon X\bir X^\nu.
$$
The singularities of $f$ measure the failure of $X$ to be normal, or
equivalently the complexity of the normalisation map. More generally, if
$$
X^\nu\to Z\to X
$$
is an intermediate finite birational model, then the birational inverse
$$
X\bir Z
$$
measures the corresponding partial normalisation of $X$. We will study this in more detail in the section on curves.
\end{exa}

\begin{exa}[Projection from a smooth point and the blowup]
Let $X$ be a smooth variety, let $x\in X$ be a closed point, and let
$t_1,\dots,t_n$ be a regular system of parameters at $x$. Locally near $x$,
these parameters define a rational map
$$
f\colon X\bir \PP^{n-1},
\qquad
f=(t_1:\dots:t_n).
$$
The base ideal of $f$ is the maximal ideal
$$
m_x=(t_1,\dots,t_n).
$$
Hence the graph of $f$ is the blowup
$$
\pi\colon Y=B_xX\to X.
$$
The morphism $\pi$ is an isomorphism away from $x$, so the singularity of $f$
is concentrated at $x$. Under the embedding
$$
Y\subset X\times \PP^{n-1},
$$
the polarisation induced from $\PP^{n-1}$ is the tautological line bundle of
the blowup:
$$
\mathcal O_Y(1)=\mathcal O_Y(-E),
$$
where $E$ is the exceptional divisor. This is the simplest map singularity on a smooth variety.
\end{exa}

\begin{exa}[A map with non-normal graph]
Consider
$$
f\colon \A^2\bir \PP^1,
\qquad
f=(x^2:y^n),
$$
with $n\geq 2$. Its graph is the hypersurface
$$
y^n u-x^2v=0
\quad\subset\quad
\A^2\times \PP^1,
$$
where $[u:v]$ are coordinates on $\PP^1$. This graph is non-normal
along the fibre over the origin $o$. It is more convenient to pass to the normalised graph $Y^\nu$ to measure the singularity of $f$.

For a general point $H\subset \PP^1$,
$$
f^*H=\Div(ax^2+by^n),
$$
so the multiplicity 
$$
\mu_of^*H=2.
$$
We will see later (Theorem \ref{t-norm-graph-fibre-degree-mult-hyperplane-pullback}) that 
$$
\deg\mathcal O_{Y^\nu}(1)|_{Y^\nu_o}=2 
$$
which in particular shows that the singularity of $f$ at the origin $o$ is not linear type.
\end{exa}

\begin{exa}[Maps with same normalised graph]
Consider the rational maps 
$$
f,h\colon \A^2\bir \PP^1, \ \ g\colon \A^2\bir \PP^2
$$ 
given by 
$$
f=(x:y), \ h=(x^2:y^2), \ g=(x^2:xy:y^2).
$$  
Denote the corresponding graphs by 
$$
Y_f, Y_h, Y_g,
$$ 
respectively. The maps are all {singular} at the origin $o$. Let $Y\to X$ be the blowup of $o$ with exceptional divisor $E$. Note that $E$ is the fibre of $Y$ over $o$. 

Then 
 $$
 Y_f=Y, \  \mathcal{O}_{Y_f}(1) = \mathcal{O}_Y(-E), \ \deg \mathcal{O}_{Y_f}(1)|_{E}=1
 $$
and  
 $$
 Y_g=Y, \ \mathcal{O}_{Y_g}(1) = \mathcal{O}_Y(-2E), \ \deg \mathcal{O}_{Y_g}(1)|_{E}=2,
 $$
 but $Y_h$ is not normal and  
 $$
 Y=Y_h^\nu, \ \mathcal{O}_{Y_h^\nu}(1) = \mathcal{O}_Y(-2E), \ \deg \mathcal{O}_{Y^\nu_h}(1)|_{E}=2.
$$
So the normalised graph can distinguish $f,g$ but not $g,h$. Note that the equality $Y=Y_h^\nu$ follows from the fact that $h$ is the composition of $g$ and the projection $\PP^2\bir \PP^1$ onto the first and last coordinates inducing a birational morphism $Y=Y_g\to Y_h$.
\end{exa}

\begin{exa}[Analogues of hypersurface singularities]\label{exa-along-hypersurface-sing}
The analogues of hypersurface singularities in the context of rational maps
are the singularities of maps
$$
f\colon X\bir \PP^1
$$
from smooth varieties. Indeed, after choosing a primitive local presentation
$f=(f_0:f_1)$, the graph is the blowup of the two-generated base ideal
$I=(f_0,f_1)$ and embeds into $X\times \PP^1$ as a divisor. 

In this sense, rational maps to $\PP^1$ play the role of hypersurfaces among
map singularities. It should therefore be useful to look for analogues of
classical invariants and methods from hypersurface singularity theory, such as
Milnor numbers, in this setting.
\end{exa}

\begin{exa}[Systematic construction of examples]
Let $X$ be an affine variety. One can construct examples of rational maps by choosing a closed
subvariety
$$
Y\subset X\times \PP^n
$$
such that the projection $Y\to X$ is birational. Then the second projection
induces a rational map
$$
f\colon X\bir \PP^n
$$
whose graph is $Y$. This gives a systematic way to produce examples of rational maps and their graphs.
\end{exa}

%%%%%%%%%%%%%%%%%%%%%%%%%%%%%%
%%%%%%%%%%%%%%%%%%%%%%%%%%%%%%
\section{\bf Maps on curves and multiplicity}

In this section we investigate singularities of rational maps on curves. We start with some basic examples.

%%%%%%%%%%%%%%%%%%%%%%%%%%%%%%
\subsection{Rational functions on curves}

Although this paper mainly concerns singularities of maps to projective varieties, we occasionally recall examples with non-projective targets.

\begin{exa}[Rational functions]
For a variety $X$, a rational function $f\in k(X)$ is the same as a rational map
$$
f\colon X \bir \A^1.
$$
The {singular} locus of $f$ can have any positive codimension. If $X$ is normal, then this locus is either empty or has pure codimension one. For instance, if $X=\A^n$ and
$$
f=\frac{g}{h},
$$
where $g,h$ have no common factor, then the {singular} locus is $V(h)$. The degrees of the irreducible factors of $h$, with multiplicities, give a natural measure of the {singularities} of $f$.
\end{exa}

\begin{exa}[Dedekind domains]
Let $A$ be a Dedekind domain with fraction field $K$. A rational map
$$
\Spec A \bir \Spec A[t]
$$
over $A$ is determined by an $A$-homomorphism $A[t]\to K$, equivalently by the image $t\mapsto f\in K$.

For each nonzero prime ideal $p\subset A$, let ${\rm ord}_{p}$ be the corresponding valuation. The singular locus of the map is
$$
\{p\mid {\rm ord}_{p}(f)<0\},
$$
that is, the support of the pole divisor
$$
\operatorname{Pole}(f)
=
\sum_{p}
\max\{-{\rm ord}_{p}(f),0\} p.
$$
Thus the pole divisor measures the singularities of the map.
\end{exa}

\begin{exa}[Integers]
Giving a rational map
$$
\Spec \Z \bir \Spec \Z[t]
$$
over $\Z$ is the same as giving a rational number $f\in \Q$. If $f=a/b$ with
$(a,b)=1$, then the {singular} locus is supported at the primes dividing
$b$. Moreover, the order of {singularity} at such a prime $p$ is measured
by ${\rm ord}_p(b)=-{\rm ord}_p(f)$: the larger the power of $p$ in the denominator, the
more singular $f$ is at $p$.
\end{exa}

%%%%%%%%%%%%%%%%%%%%%%%%%%%%%%
\subsection{Multiplicity of curves and normalisation}

We will now investigate singularities of the inverse of normalisation $X^\nu\to X$ of a curve.
We recall the Hilbert--Samuel multiplicity; see for example \cite[\S\S13--14]{Matsumura-CRT}.

\begin{defn}[Hilbert--Samuel multiplicity]
Let $(A,m)$ be a Noetherian local ring, let $I\subset A$ be an $m$-primary
ideal, and let $M$ be a finitely generated $A$-module of dimension $d$. For $r\gg 0$,
$$
H_{I,M}(r):=\operatorname{length}_A(M/I^{r+1}M)
$$
agrees with a polynomial $P_{I,M}(r)$ of degree $d$. The Hilbert--Samuel
multiplicity of $M$ with respect to $I$ is
$$
e_I(M):=d!\cdot \text{the leading coefficient of }P_{I,M}.
$$
In particular, if $A$ is one-dimensional,
then
$$
\operatorname{length}_A(M/m^{r+1}M)=e_m(M)r+O(1).
$$

When $X$ is a Noetherian scheme and $x\in X$ is a closed point, we write $\mu_x X:=e_{m_x}(A)$, where
$A=\mathcal O_{X,x}$ and $m_x$ is the maximal ideal, and call it the multiplicity of $X$ at $x$.
\end{defn}

The following fact identifies the multiplicity of a reduced curve singularity with the degree of the scheme-theoretic fibre of its normalisation; cf. \cite[\S1]{Orecchia-ordinary-curves}.

\begin{lem}[Multiplicity and the normalisation fibre for curves]\label{t-mult-normalisation-fibre-curves}
Let $X$ be a reduced (not necessarily irreducible) curve over the ground field $k$, and let
$$
\nu:X^\nu \longrightarrow X
$$
be its normalisation. Let $x\in X$ be a closed point, and let
$$
X^\nu_x=X^\nu\times_X \Spec k(x)
$$
be the scheme-theoretic fibre. Then the multiplicity 
$$
\mu_x X=\deg X^\nu_x.
$$
\end{lem}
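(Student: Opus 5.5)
We reduce to a statement about one-dimensional local rings. Let $A=\mathcal O_{X,x}$ with maximal ideal $m$, and let $B$ be the normalisation of $A$, i.e.\ the semi-local ring of $X^\nu$ over $x$. Since $X$ is reduced and of finite type over $k$, $\nu$ is finite, so $B$ is a finite $A$-module, and formation of $\nu_*$ commutes with base change. Hence
$$
\deg X^\nu_x=\dim_k B/mB=\operatorname{length}_A(B/mB),
$$
using that $k=k(x)$ because $k$ is algebraically closed. The claim is therefore $e_m(A)=\dim_k B/mB$.

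\textbf{Step 1: compare $A$ and $B$.} The cokernel $B/A$ is a finitely generated $A$-module supported at $m$, since $X$ is normal away from finitely many points. It therefore has finite length, and $\dim(B/A)=0<1$. Additivity of Hilbert--Samuel multiplicity on short exact sequences of modules of dimension $\le 1$ (only the dimension-one pieces contribute) then gives
$$
e_m(A)=e_m(B),
$$
where $B$ is viewed as an $A$-module.

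\textbf{Step 2: compute $e_m(B)$.} $B$ is a finite product of DVRs, or more precisely a semi-local regular ring of dimension one, so it is a torsion-free finitely generated $A$-module of rank one. Since $B$ is a semi-local PID, the ideal $mB$ is principal, say $mB=tB$ with $t$ a nonzerodivisor. Hence $m^rB=t^rB$, and multiplication by $t^j$ gives isomorphisms $t^jB/t^{j+1}B\simeq B/tB$. It follows that
$$
\operatorname{length}_A(B/m^{r+1}B)=(r+1)\operatorname{length}_A(B/mB).
$$
Thus $e_m(B)=\dim_k B/mB=\deg X^\nu_x$, and combining with Step 1 proves the lemma.

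\textbf{Main obstacle.} The only delicate point is Step 1: the additivity formula $e_m(M)=e_m(M')+e_m(M'')$ for $0\to M'\to M\to M''\to 0$ when all dimensions are at most $d$ and multiplicities are taken in dimension $d$. I would justify it via Artin--Rees, or more cheaply here. Since $B/A$ has finite length, $\operatorname{length}(B/m^{r+1}B)$ and $\operatorname{length}(A/m^{r+1}A)$ differ by a bounded amount. Indeed, $m^cB\subset A$ for some $c$ because the conductor is $m$-primary, which sandwiches $m^{r+1+c}B\subset m^{r+1}A\subset m^{r+1}B$. The linear coefficients in $r$ therefore agree.

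The reduction to $k(x)=k$ and the principal-ideal argument in semi-local Dedekind rings are routine.
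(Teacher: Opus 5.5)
Your proof is correct and follows essentially the same route as the paper. Both arguments first get $e_m(A)=e_m(B)$ from the finite length of $B/A$, where your conductor sandwich makes precise the paper's remark that the Hilbert--Samuel function changes only by a bounded amount. Both then compute $e_m(B)=\operatorname{length}(B/mB)$ from principality of $mB$; the paper does this point by point via $m_x\mathcal O_{X^\nu,x'}=m_{x'}^{p_{x'}}$, while you do it on the semi-local ring.

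One small correction: $B$ is in general a finite product of semi-local PIDs, one for each component of $X$ through $x$. It is not a product of DVRs, nor a PID when several components pass through $x$. It is still a principal ideal ring in which $mB$ is generated by a nonzerodivisor, so your Step 2 goes through unchanged.
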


\begin{proof}
Set
$$
A=\mathcal O_{X,x},
\qquad
B=(\nu_*\mathcal O_{X^\nu})_x.
$$
Since $\nu$ is finite and birational, $B/A$ has finite length. Hence
$$
e_{m_x}(A)=e_{m_x}(B),
$$
because adding a finite-length module changes the Hilbert--Samuel function only
by a constant.

For each $x'$ mapping to $x$ we can write
$$
m_x\mathcal O_{X^\nu,x'}=m_{x'}^{p_{x'}}
$$
for some number $p_{x'}$. 
We then obtain
$$
\operatorname{length}(B/m_x^{r+1}B)
=
(r+1)\sum_{x'\mapsto x}p_{x'}.
$$
 Therefore
$$
e_{m_x}(B)=\sum_{x'\mapsto x}p_{x'}.
$$
Moreover,
$$
\operatorname{length}(B/m_xB)
=
\sum_{x'\mapsto x}p_{x'}
=
e_{m_x}(B).
$$

On the other hand,
$$
B/m_xB
=
(\nu_*\mathcal O_{X^\nu})_x
\otimes_{\mathcal O_{X,x}}
k(x)
$$
is the coordinate ring of the scheme-theoretic fibre $X^\nu_x$. Hence
$$
\mu_xX=e_{m_x}(A)=e_{m_x}(B)=\operatorname{length}(B/m_xB)
=
\deg X^\nu_x.
$$
\end{proof}

This can also be proved in a more geometric way by assuming $X$ affine and interpreting $\mu_xX$ as the local intersection number $(X\cdot H)_x$ for a general hyperplane $H$ through $x$, and resolving $X$ through a sequence of point blowups.

%%%%%%%%%%%%%%%%%%%%%%%%%%%%%%
%%%%%%%%%%%%%%%%%%%%%%%%%%%%%%
\section{\bf The normalised graph and associated invariants}

In this section we study fibres of the normalised graph of a rational map and associated invariants.
Recall the {normalised polarised graph} of a rational map $f\colon X\bir \PP^n$ from a variety $X$:
$$
\xymatrix{
& Y^\nu,\mathcal O_{Y^\nu}(1) \ar[dl]_{p} \ar[dr]^{q} & \\
X && \PP^n .
}
$$
Fix a point \(x\in X\). The scheme-theoretic fibre of the normalised graph over \(x\) is denoted 
$$
Y^\nu_x.
$$
Restricting the polarisation gives
$$
\mathcal O_{Y^\nu_x}(1):=\mathcal O_{Y^\nu}(1)|_{Y^\nu_x},
$$
and restricting \(q\) gives a morphism
$$
q_x\colon Y^\nu_x\to \PP^n.
$$
Thus the local object attached to \(f\) at \(x\) is the \emph{polarised normalised graph fibre}  
packaged into the diagram
$$
\xymatrix{
& Y^\nu_x,\mathcal O_{Y^\nu_x}(1) \ar[dl] \ar[dr]^{q_x} & \\
\Spec k(x) && \PP^n .
}
$$

The geometric complexity of $Y^\nu_x\to \PP^n$ gives a direct way to measure the complexity of the rational map $f$ at $x$. For example, linear type maps will have the simplest kind of $Y^\nu_x\to \PP^n$, which is a closed embedding of degree one. 

In general, we attach algebraic and geometric structures to $Y^\nu_x,\mathcal O_{Y^\nu_x}(1)\to \PP^n$. The simplest invariant is the \emph{normalised graph fibre degree}  
$$
\delta_x(f):=\deg \mathcal O_{Y^\nu}(1)|_{Y^\nu_x}=\mathcal O_{Y^\nu_x}(1)^{\dim Y^\nu_x}
$$ 
where the right hand side denotes the top self-intersection number of the polarisation.

A more refined invariant is the Hilbert polynomial $\Phi_{x,f}$ given by 
$$
\Phi_{x,f}(m)=\chi(Y^\nu_x,\mathcal O_{Y^\nu_x}(m))=\frac{\delta_x(f)}{r!}m^r+O(m^{r-1})
$$
where $r=\dim Y^\nu_x$.

Another important invariant is the \emph{very ampleness index} defined to be the smallest natural number $m$ so that 
$\mathcal O_{Y^\nu}(m)$ is very ample over a neighbourhood of $x$.

%%%%%%%%%%%%%%%%%%%%%%%%%%%%%%
\subsection{Normalised fibre degree spectrum}
Let
$$
f:X\dashrightarrow \mathbb P^n
$$
be a rational map from a variety and $x\in X$ a closed point. 
The polarised normalised graph fibre gives a collection of numbers, namely, \emph{the degree spectrum of $f$ at $x$} by
$$
\operatorname{DSpec}_x(f)
:=
\{
\deg_{\mathcal O_{Y^\nu}(1)} A\}
$$
where $A\ge 0$ runs through the pure-dimensional cycles on $Y^\nu$ supported on irreducible components of $Y^\nu_x$, and 
$$
\deg_{\mathcal O_{Y^\nu}(1)} A:=\mathcal O_{Y^\nu}(1)^{\dim A}\cdot A.
$$
In particular, the degree 
$$
\delta_x(f)=\deg \mathcal O_{Y^\nu}(1)|_{Y^\nu_x}
$$
is one of the numbers in the degree spectrum.

In the case of smooth surfaces $X$ we will see later that the degree spectrum is closely related to canonical thresholds in a sequence of smooth blowups resolving $f$.

%%%%%%%%%%%%%%%%%%%%%%%%%%%%%%
\subsection{Multiplicity of general hyperplane pullback}

\begin{thm}[Normalised graph fibre degree and general hyperplane pullback]\label{t-norm-graph-fibre-degree-mult-hyperplane-pullback}
Let $X$ be a normal surface and let
$$
f\colon X\dashrightarrow \mathbb P^n
$$
be a rational map singular at a closed point $x\in X$. Let
$$
p\colon Y^\nu\longrightarrow X,
\qquad
q\colon Y^\nu\longrightarrow \mathbb P^n
$$
be the normalised graph of $f$.
 
Let $H\subset \mathbb P^n$ be a general hyperplane, and 
$$
V:=f^*H\subset X.
$$
Then
$$
\delta_x(f)=\mu_x V.
$$
\end{thm}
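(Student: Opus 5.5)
The plan is to reduce to the curve case, Lemma~\ref{t-mult-normalisation-fibre-curves}, by cutting $Y^\nu$ with the pullback of a general hyperplane. Put
$$
V':=q^*H\subset Y^\nu .
$$
Since $f$ is singular at $x$ and $X$ is normal, $p$ is an isomorphism over $X\setminus\{x\}$ near $x$, and the fibre $Y^\nu_x$ is a proper one-dimensional scheme. It may have finitely many embedded points. Because $Y\subset X\times\PP^n$, the polarisation $\mathcal O_{Y^\nu}(1)$ is ample over $X$, so $q$ is finite on $Y^\nu_x$. In particular $q$ contracts no component of $Y^\nu_x$.

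First step: collect what generality of $H$ buys. The linear system $|q^*\mathcal O_{\PP^n}(1)|$ is base point free. So for general $H$, Bertini gives that $V'$ is reduced and smooth away from $\operatorname{Sing}(Y^\nu)$. Since $Y^\nu$ is a normal surface, $\operatorname{Sing}(Y^\nu)$ is finite near $Y^\nu_x$, and general $V'$ avoids it. Also, $V'$ contains no component of $Y^\nu_x$, since each such component maps to a curve in $\PP^n$. Finally, $V'$ avoids the finitely many associated (including embedded) points of $Y^\nu_x$. Consequently $V'$ has no $p$-exceptional component and $p_*V'=f^*H=V$, using the description of $f^*H$ via a normal model. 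The morphism $V'\to V$ is then finite and birational near $x$ with $V'$ smooth there, hence it is the normalisation of the reduced curve $V$ near $x$.

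Second step: apply Lemma~\ref{t-mult-normalisation-fibre-curves} to $V'\to V$ at $x$. This gives
$$
\mu_xV=\deg V'_x=\operatorname{length}\bigl(\mathcal O_{V'}/m_x\mathcal O_{V'}\bigr),
$$
the length of the scheme-theoretic fibre of $V'$ over $x$. This fibre is exactly the scheme
$$
V'\times_{Y^\nu}Y^\nu_x=V'\cap Y^\nu_x .
$$

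Third step: compute $\delta_x(f)$ by the same intersection. On the proper one-dimensional scheme $Z:=Y^\nu_x$, the section defining $H$ restricts to a section of $\mathcal O_Z(1)$. This section is a nonzerodivisor, since $V'$ misses every associated point of $Z$. So there is an exact sequence
$$
0\to\mathcal O_Z\to\mathcal O_Z(1)\to\mathcal O_{V'\cap Z}\to0,
$$
and hence $\deg\mathcal O_Z(1)=\chi(\mathcal O_Z(1))-\chi(\mathcal O_Z)=\operatorname{length}(V'\cap Z)$. Comparing with the second step gives $\delta_x(f)=\mu_xV$.

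The main obstacle is the bookkeeping in the first step. One must check that the scheme-theoretic fibre $Y^\nu_x$, which may be non-reduced and carry embedded points, is handled correctly, so that the degree of $\mathcal O(1)$ really equals the length of a general hyperplane section. One must also check that $V'$ is smooth at every point over $x$, so that the curve lemma applies. I would handle both by the finiteness of $\operatorname{Sing}(Y^\nu)$ and of the associated points, together with base point freeness of $q^*|\mathcal O(1)|$.
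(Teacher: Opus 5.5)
Your proposal is correct and follows the paper's proof essentially step by step. In both, $q^*H$ is a smooth general divisor mapping to $V$ as its normalisation near $x$, Lemma~\ref{t-mult-normalisation-fibre-curves} gives $\mu_xV=\operatorname{length}(q^*H\cap Y^\nu_x)$, and this length is identified with $\deg\mathcal O_{Y^\nu}(1)|_{Y^\nu_x}$. Your extra care there, choosing $H$ to miss the embedded points so that the restricted section is a nonzerodivisor and $\chi(\mathcal O_Z(1))-\chi(\mathcal O_Z)$ equals that length, justifies a step the paper states without argument.
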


\begin{proof}
Since $H$ is a general hyperplane and $Y^\nu$ is normal, $D:=q^*(H)$ is a smooth smooth, possibly disconnected, divisor and $V=p_*D$ is reduced. Moreover, the induced morphism 
$$
p|_D\colon D\longrightarrow V
$$
is the normalisation of $V$. 

By definition, 
$$
\delta_x(f)=\deg \mathcal O_{Y^\nu}(1)|_{Y^\nu_x}=\deg D|_{Y^\nu_x}.
$$
This in turn is equal to the scheme-theoretic intersection length   
$$
\operatorname{length}(D\cap Y^\nu_x).
$$
However, the intersection $D\cap Y^\nu_x$ coincides with the fibre $D_x$ of $D\to V$ over $x$.

On the other hand, by Theorem \ref{t-mult-normalisation-fibre-curves}, 
$$
\mu_xV=\deg D_x,
$$
hence 
$$
\mu_xV=\deg D_x=\operatorname{length}(D\cap Y^\nu_x)=\deg D|_{Y^\nu_x}=\delta_x(f). 
$$
\end{proof}

When $f$ is represented locally by a base ideal $I$, the number $\delta_x(f)$ is the first polar multiplicity $m_1(I,X)$ at $x$. Thus the theorem is closely related to the description of the first polar multiplicity as the multiplicity of a general polar divisor; see \cite[\S2 and \S3]{Gaffney-Gassler}. The formulation above is intrinsic to the rational map and applies in particular when $X$ is normal singular and $f^*H$ need not be Cartier.

%%%%%%%%%%%%%%%%%%%%%%%%%%%%%%
\subsection{Toric surface maps}
We combinatorially describe the normalised graph fibre degree in the case of toric maps from surfaces to $\PP^1$.
For basics of toric geometry used in this paper, see \cite{Cox-Little-Schenck}.

\begin{thm}[Normalised graph fibre degree for toric surface maps]
\label{thm:toric-normalised-graph-fibre-degree}
Let $N=\Z^2$ and $N'=\Z$. Let
$$
\sigma=\langle v_0,v_1\rangle\subset N_{\R}
$$
be a two-dimensional strongly convex rational polyhedral cone, where $v_0,v_1$ are primitive, and let
$$
X=X_\sigma,
\qquad
x=x_\sigma
$$
be the corresponding affine toric surface and its closed torus-fixed point.

Let
$$
\alpha\colon N\longrightarrow N'
$$
be a nonzero homomorphism such that $\alpha(v_0)$ and $\alpha(v_1)$ lie on opposite sides of the origin. Then $\alpha$ induces a toric rational map
$$
f\colon X\bir\PP^1
$$
which is singular at $x$.

Let $w\in N$ be the primitive generator of $\ker\alpha$ lying in the interior of $\sigma$, and define
$$
\mu_\sigma(w)
:=
\min\left\{
\langle m,w\rangle>0
\mid
m\in\sigma^\vee\cap M
\right\}.
$$
Also set
$$
d(\alpha):=[N':\alpha(N)].
$$
Let
$$
p\colon Y^\nu\longrightarrow X,
\qquad
q\colon Y^\nu\longrightarrow\PP^1
$$
be the normalised graph of $f$. Then $Y^\nu$ is the toric surface obtained by subdividing $\sigma$ along the ray $\R_{\ge 0}w$. If
$$
E=D_w
$$
is the corresponding exceptional divisor, then  
$$
[Y^\nu_x]=\mu_\sigma(w)E,
$$
where the left hand side denotes the associated cycle of the scheme-theoretic fibre. Moreover,
$$
E\simeq\PP^1
\ \mbox{and} \
\deg\mathcal{O}_{Y^\nu}(1)|_E=d(\alpha).
$$
Consequently,
$$
\delta_x(f)
=
\mu_\sigma(w)d(\alpha).
$$
\end{thm}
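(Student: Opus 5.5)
We treat $f$ as the toric map attached to the lattice map $\alpha$, so everything reduces to fan combinatorics. First, $\alpha$ induces a morphism of tori $T_N\to T_{N'}=\mathbb G_m\subset\PP^1$. The fan of $\PP^1$ in $N'_\R$ consists of the two rays $\R_{\ge0}$ and $\R_{\le0}$ together with the origin. By the standard criterion, a toric morphism $X_\Sigma\to\PP^1$ extending it exists if and only if every cone of $\Sigma$ maps under $\alpha_\R$ into a cone of the fan of $\PP^1$. Since $\alpha(v_0)$ and $\alpha(v_1)$ have opposite signs, $\alpha_\R(\sigma)$ is all of $N'_\R$, so $f$ is not regular at $x$. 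The coarsest refinement of $\sigma$ compatible with $\alpha$ is the subdivision $\Sigma$ along $\ker\alpha_\R\cap\sigma=\R_{\ge0}w$. Such a $w$ exists, is unique and is interior because the two signs are opposite.

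Next we identify this toric surface $X_\Sigma$ with $Y^\nu$. The induced map $X_\Sigma\to X\times\PP^1$ is proper and birational onto the graph $Y$, and $X_\Sigma$ is normal, so it factors through $Y^\nu$. Both $X_\Sigma$ and $Y^\nu$ are normal, proper and birational over $Y$, so it suffices to see that $X_\Sigma\to Y$ is finite, i.e.\ contracts no curve. The only curve contracted by $X_\Sigma\to X$ is $E=D_w\simeq\PP^1$. Its image in $\PP^1$ is all of $\PP^1$, because the two cones $\langle v_0,w\rangle$ and $\langle w,v_1\rangle$ map to opposite rays, so the two torus-fixed points of $E$ go to $0$ and $\infty$. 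Hence $X_\Sigma\to Y$ is quasi-finite and proper, so finite and birational, and therefore $X_\Sigma=Y^\nu$.

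For the fibre cycle, $p^{-1}(x)=E$ set-theoretically. The multiplicity of $E$ in $[Y^\nu_x]$ is $\min_{h\in m_x}\operatorname{ord}_E(h)$. For characters $\chi^m$ with $m\in\sigma^\vee\cap M\setminus\{0\}$, which generate $m_x$, we have $\operatorname{ord}_E(\chi^m)=\langle m,w\rangle$. The order of a sum of characters is at least the minimum over them, so this minimum equals $\mu_\sigma(w)$ and $[Y^\nu_x]=\mu_\sigma(w)E$. One check is needed here: $m\neq0$ in $\sigma^\vee$ with $w$ interior gives $\langle m,w\rangle>0$, so the defining condition ``$>0$'' is harmless.

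The main obstacle is the degree of $\mathcal O_{Y^\nu}(1)|_E$. The map $q|_E\colon E\to\PP^1$ is the toric morphism of the $1$-dimensional orbit closure $D_w$, whose lattice is $N/\Z w$, and it is induced by $\bar\alpha\colon N/\Z w\to N'$. Since $w$ is primitive, $N/\Z w\simeq\Z$, and $\bar\alpha$ is injective because $\ker\alpha=\Z w$. A toric map $\PP^1\to\PP^1$ induced by $\Z\to\Z$ with image of index $d$ is $t\mapsto t^{\pm d}$, which has degree $d$. Here $d=[N':\bar\alpha(N/\Z w)]=[N':\alpha(N)]=d(\alpha)$, so $\deg\mathcal O(1)|_E=\deg q|_E=d(\alpha)$. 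To be careful, we will verify that the orbit-closure morphism is really given by $\bar\alpha$, by checking on the torus of $D_w$ that the character $\chi^{m'}$ of $\PP^1$ pulls back to $\chi^{\alpha^*m'}$, which lies in $w^\perp$. Finally, multiplying the degree by the cycle multiplicity gives $\delta_x(f)=\mu_\sigma(w)\,d(\alpha)$.
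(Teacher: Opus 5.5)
Your proposal is correct and follows essentially the same route as the paper. You subdivide $\sigma$ along $\R_{\ge0}w$, compute $\operatorname{ord}_E(\mathfrak m_x)=\mu_\sigma(w)$ from the characters generating $\mathfrak m_x$, and get $\deg q|_E=d(\alpha)$ from the injective map $\overline\alpha\colon N/\Z w\to N'$. Your extra argument that $X_\Sigma\to Y$ is finite and birational (because $E$ surjects onto $\PP^1$), hence $X_\Sigma=Y^\nu$, fills in a step the paper only asserts.
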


\begin{proof}
The fan of $\PP^1$ in $N'_{\R}$ consists of the origin and the two rays on opposite sides of the origin. Since $\alpha(v_0)$ and $\alpha(v_1)$ lie on opposite sides, the inverse images of these two rays subdivide $\sigma$ along
$$
\ker(\alpha)_{\R}\cap\sigma=\R_{\ge 0}w.
$$
Therefore the fan of the normalised graph is the star subdivision of $\sigma$ along the ray $\R_{\ge 0}w$. Its two maximal cones are
$$
\langle v_0,w\rangle
\qquad\mbox{and}\qquad
\langle w,v_1\rangle.
$$
The exceptional locus of
$$
p\colon Y^\nu\longrightarrow X
$$
is the torus-invariant prime divisor
$$
E=D_w.
$$

We first calculate the multiplicity of $E$ in the fibre over $x$. By \cite[Corollary 1.3.3]{Cox-Little-Schenck}, the maximal ideal of the torus-fixed point $x$ is
$$
\mathfrak m_x
=
\left(
\chi^m
\mid
0\neq m\in\sigma^\vee\cap M
\right).
$$
For every $m\in M$, 
$$
\operatorname{ord}_E(\chi^m)=\langle m,w\rangle.
$$
Since $w\in\operatorname{Int}(\sigma)$, we have
$$
\langle m,w\rangle>0
$$
for every nonzero $m\in\sigma^\vee\cap M$. It follows that
$$
\operatorname{ord}_E(\mathfrak m_x)
=
\min\left\{
\langle m,w\rangle>0
\mid
m\in\sigma^\vee\cap M
\right\}
=
\mu_\sigma(w).
$$

Let $\eta_E$ be the generic point of $E$. Since $Y^\nu$ is normal,
$$
\mathcal{O}_{Y^\nu,\eta_E}
$$
is a discrete valuation ring. If $t$ is a uniformising parameter, then
$$
\mathfrak m_x\mathcal{O}_{Y^\nu,\eta_E}
=
(t^{\mu_\sigma(w)}).
$$
Thus the generic length of the fibre along $E$ is $\mu_\sigma(w)$. Since the support of the fibre is $E$, its fundamental cycle is
$$
[Y^\nu_x]=\mu_\sigma(w)E.
$$

We next calculate the degree of the polarisation on $E$. The fan of $E$ is the star of the ray $\R_{\ge 0}w$ in the fan of $Y^\nu$. Equivalently, it is the fan in
$$
\overline N:=N/\Z w
$$
whose two rays are generated by the images of $v_0$ and $v_1$. This is the complete fan of a toric curve, and hence
$$
E\simeq\PP^1.
$$

Since $\ker\alpha=\Z w$, the homomorphism $\alpha$ factors as
$$
N\longrightarrow\overline N
\xrightarrow{\overline\alpha}
N',
$$
where $\overline\alpha$ is injective and
$$
\overline\alpha(\overline N)=\alpha(N).
$$
The restriction
$$
q|_E\colon E\longrightarrow\PP^1
$$
is the finite toric morphism induced by $\overline\alpha$. Its degree is therefore
$$
\deg(q|_E)
=
[N':\overline\alpha(\overline N)]
=
[N':\alpha(N)]
=
d(\alpha).
$$
Since
$$
\mathcal{O}_{Y^\nu}(1)=q^*\mathcal{O}_{\PP^1}(1),
$$
we obtain
$$
\deg\mathcal{O}_{Y^\nu}(1)|_E
=
\deg(q|_E)
=
d(\alpha).
$$

Finally,
$$
\delta_x(f)
=
\deg_{\mathcal{O}_{Y^\nu}(1)}Y^\nu_x
=
\mathcal{O}_{Y^\nu}(1)\cdot[Y^\nu_x].
$$
Using the preceding calculations gives
$$
\delta_x(f)
=
\mu_\sigma(w)
\deg\mathcal{O}_{Y^\nu}(1)|_E
=
\mu_\sigma(w)d(\alpha).
$$
\end{proof} 

%%%%%%%%%%%%%%%%%%%%%%%%%%%%%%
\subsection{Examples}

\begin{exa}[Degree of maximal ideal maps and multiplicity]\label{e-maximal ideal maps}
Let $x\in X$ be a normal singularity of dimension $d$ and let $m_x$ be the maximal ideal of $x$. Choose generators
$f_0,\ldots,f_n$ of $m_x$, and let
$$
f\colon X\dashrightarrow \PP^n,
\qquad f=(f_0:\cdots:f_n),
$$
be the corresponding rational map. Then the blowup
$$
p\colon Y=\operatorname{B}_{m_x}X\longrightarrow X
$$
is precisely the graph of $f$.

We have 
$$
Y=\operatorname{Proj}\bigoplus_{i\geq 0}m_x^i,
\qquad
m_x\mathcal O_Y=\mathcal O_Y(1).
$$
The scheme-theoretic fibre $Y_x$ over $x$ is the effective Cartier divisor
defined by $m_x\mathcal O_Y$. Moreover,
$$
Y_x
=
\operatorname{Proj}
\bigoplus_{i\geq 0}\frac{m_x^i}{m_x^{i+1}}.
$$
Recall that the Hilbert--Samuel multiplicity is the degree of the exceptional fibre of the blowup of the maximal ideal; see \cite[\S4.3]{Fulton-Intersection-Theory}.
For $i\gg 0$, 
$$
\dim_k \frac{m_x^i}{m_x^{i+1}}=\mathcal{X}(Y_x,O_Y(i)|_{Y_x}).
$$
The right hand side is the Hilbert polynomial of $Y_x$ with respect to $O_Y(1)|_{Y_x}$, which is a polynomial of degree $d-1$ with leading coefficient 
$$
\frac{\deg\mathcal O_Y(1)|_{Y_x}}{(d-1)!}.
$$

On the other hand, 
$$
\sum_{0\le j\le i}\dim \frac{m_x^j}{m_x^{j+1}}
$$ 
is the Hilbert--Samuel polynomial in $i$ of degree $d$ with leading coefficient $\frac{\mu_xX}{d!}$.
From these one deduces that 
$$
\mu_xX
=
\deg\mathcal O_Y(1)|_{Y_x}.
$$

Now let
$$
\nu\colon Y^\nu\longrightarrow Y
$$
be the normalization. The pullback of $m_x\mathcal O_Y$ is again an
invertible ideal sheaf. Hence if $m_x\mathcal O_Y$ defines the effective Cartier divisor $E$, then its pullback defines the effective exceptional Cartier divisor $\nu^*E$ so that 
$$
m_x\mathcal O_{Y^\nu}
=
\nu^*\mathcal O_Y(1)
=
\mathcal O_{Y^\nu}(1)
=
\mathcal O_{Y^\nu}(-\nu^*E).
$$
The scheme-theoretic fibre of $Y^\nu\to X$ over $x$ is precisely $\nu^*E$.

Moreover, 
$$
\deg\mathcal O_Y(1)|_{Y_x}=(-E|_E)^{d-1}=(-1)^{d-1}E^d
$$
and 
$$
\deg \mathcal O_{Y^\nu}(1)|_{Y^\nu_x}=(-\nu^*E|_{\nu^*E})^{d-1}=(-1)^{d-1}\nu^*E^d.
$$
Since $\nu$ is birational, it follows that the normalised graph fibre degree coincides with the multiplicity:
$$
\delta_x(f)=\deg \mathcal O_{Y^\nu}(1)|_{Y^\nu_x}=(-1)^{d-1}E^d=(-1)^{d-1}\nu^*E^d
=
\deg\mathcal O_Y(1)|_{Y_x}
=
\mu_xX.
$$
\end{exa}
\medskip

\begin{exa}[Rational maps from a cone over an elliptic curve]
Let
$$
X\subset\mathbb A^3
$$
be the affine cone defined by
$$
y^2z-x^3-xz^2=0,
$$
and let $o=(0,0,0)$ be its vertex. Consider the rational map
$$
f\colon X\dashrightarrow\mathbb P^1,
\qquad
f=(x:z)=(y^2:x^2+z^2).
$$
This map is singular at $o$. 

We can use Theorem \ref{t-norm-graph-fibre-degree-mult-hyperplane-pullback} to calculate $\delta_o(f)$. Take a general hyperplane $u-bv=0$ where $u,v$ are the coordinates on $\PP^1$. Then $f^*H$ can be derived from intersecting $X$ with $x-bz=0$, that is,  
$$
y^2z-b^3z^3-bz^3=z(y^2-b^3z^2-bz^2)=0.
$$
Since the factor $z$ is independent of $b$, $f^*H$ is given by the equation 
$$
y^2-b^3z^2-bz^2=0
$$
in $\A^2$ with coordinates $y,z$. This has multiplicity $2$ at $(0,0)$, so we deduce that $\delta_o(f)=2$.

We can also calculate $\delta_o(f)$ directly. Let

$$
E\subset \mathbb P^2 \qquad \mbox{defined by} \qquad {y^2z-x^3-xz^2=0}
$$
be the smooth plane cubic over which $X$ is the affine cone, and let
$$
\pi\colon W\to X
$$
be the blowup of the vertex $o$. Then $W$ is smooth, the exceptional divisor is $E$, and $E^2=-3$.

Let $I=(x,z)\subset k[X]$. The graph $Y$ of $f=(x:z)$ is the blowup of $I$.
Intersecting $X$ with $x=0$ in $\A^3$ gives $y^2z=0$. Thus on $X$,
$$
\Div(x)=2L+M,
$$
where $L$ is given by ${x=y=0}$ and $M$ by ${x=z=0}$. Similarly, intersecting $X$ with $z=0$ gives $x^3=0$, and hence
$$
\Div(z)=3M.
$$

Let $L'$ and $M'$ be the birational transforms of $L$ and $M$ on $W$. Since $x$ and $z$ vanish to order one at the vertex, on $W$ we have 
$$
\Div(\pi^*x)=E+2L'+M'
$$
and
$$
\Div(\pi^*z)=E+3M'.
$$
Their common divisorial part is $E+M'$. Since $L'$ and $M'$ are disjoint,
$$
I\mathcal O_W=\mathcal O_W(-E-M').
$$
Thus $I\mathcal O_W$ is invertible and we derive two of its sections from $\pi^*x,\pi^*z$ inducing a finite birational morphism $W\to Y$ over $X$. This morphism is an isomorphism away from $E$, and its restriction to $E$ is a degree $2$ morphism $E\to \PP^1$. In particular, since $W$ is smooth,
$$
Y^\nu=W.
$$

Moreover,
$$
\mathcal O_{Y^\nu}(1)=\mathcal O_W(-E-M').
$$
Since $M'\cdot E=1$, we have
$$
\deg\mathcal O_{Y^\nu}(1)|_E=(-E-M')\cdot E=3-1=2.
$$
Therefore,
$$
\delta_o(f)=2
$$
keeping in mind that $E=Y^\nu_o$ is the fibre of $W=Y^\nu\to X$ over $o$.
\end{exa}

%%%%%%%%%%%%%%%%%%%%%%%%%%%%%%
%%%%%%%%%%%%%%%%%%%%%%%%%%%%%%
\section{\bf Thresholds of rational maps on klt varieties}

In this section we study rational maps $f\colon X\bir \PP^n$ from klt varieties $X$. A useful invariant of $f$ in this context is given by the log canonical threshold of $f^*H$ (in a suitable sense) for a hyperlpane section $H\subset \PP^n$. A larger threshold indicates simpler singularities for $f$. However, since the threshold is also affected by the singularities of $X$ itself, a small threshold does not necessarily mean that map is more singular.  
In latter sections we will also consider canonical thresholds.

We will use the language of generalised pairs in this section. For a general introduction, see \cite{Birkar-generalised-pairs}.

%%%%%%%%%%%%%%%%%%%%%%%%%%%%%%
\subsection{The generalised lc threshold of a hyperplane pullback}

Let $x\in X$ be a klt variety and let
$$
f\colon X\dashrightarrow \mathbb P^n
$$
be a rational map. Let
$$
p\colon Y^\nu\to X,\qquad q\colon Y^\nu\to \mathbb P^n
$$
be the normalised graph of $f$ with its associated morphisms. Let $H\subset \mathbb P^n$ be a hyperplane not containing the image of $X$ (e.g. we can take $H$ to be general), and assume that the Weil divisor $f^*H$ is $\mathbb Q$-Cartier near $x$. This assumption is automatic when $X$ is a klt surface.

For any $t\ge 0$ consider $(X,t f^*H)$ as a \emph{generalised pair} with nef part $tq^*H$. We define the \emph{generalised lc threshold}
$$
\lambda_x(f):=\sup\{t\ge 0\mid (X,t f^*H)\text{ is generalised lc at }x\}.
$$
More precisely, on $Y^\nu$ we can write
$$
p^*f^*H=q^*H+E_f,
$$
where $E_f\ge 0$ is exceptional over $X$, by the negativity lemma.

For $t\geq 0$, define $B_t$ on $Y^\nu$ by
$$
K_{Y^\nu}+B_t=p^*K_X+tE_f.
$$
Equivalently,
$$
K_{Y^\nu}+B_t+tq^*H=p^*(K_X+t f^*H).
$$
Then
$$
\lambda_x(f):=\sup\{t\geq 0\mid (Y^\nu,B_t)\text{ is sub-lc over }x\}.
$$

The definition of $\lambda_x(f)$ actually depends only on the $\mathbb Q$-linear equivalence class of $H$, so in particular it is independent of the choice of hyperplane $H$.

The above threshold can also be defined using usual pairs (rather than generalised pairs) making use of movable lc thresholds. But the generalised language is more convenient and standard nowadays.

\begin{rem}
Now let
$$
\rho\colon X'\to X
$$
be the local index-one cover at $x$, and let
$$
f'\colon X'\dashrightarrow \mathbb P^n
$$
be the induced rational map. Since $\rho$ is finite and also étale in codimension one,
$$
K_{X'}=\rho^*K_X
$$
and
$$
f'^*H=\rho^*f^*H.
$$
Moreover, $\rho^{-1}(x)$ is a unique point $x'$  (where $\rho^{-1}$ means set-theoretic inverse image), and we have
$$
\lambda_x(f)=\lambda_{x'}(f').
$$
Hence one may pass to the index-one cover and work in the Gorenstein case. In particular, in dimension two the index-one cover is smooth or has a Du Val singularity. 
\end{rem}

\begin{rem}
It is worth noting that if $x\in X$ is smooth and $I\subset \mathcal{O}_{X,x}$ is the primitive local base ideal of $f$, then
$$
\lambda_x(f)=\operatorname{lct}_x(I).
$$
Indeed, if
$$
\phi\colon W\to X
$$
is a log resolution of $I$ on which $f$ is resolved, and
$$
I\mathcal{O}_W=\mathcal{O}_W(-F),
$$
then for a general hyperplane $H\subset \mathbb P^n$ we have
$$
\phi^*f^*H=h^*H+F,
$$
where $h\colon W\to\mathbb P^n$ is the induced morphism. See \cite{Lazarsfeld-Positivity-II} and \cite[Example 4.5(6)]{Birkar-generalised-pairs} for relevant discussions.
\end{rem}

%%%%%%%%%%%%%%%%%%%%%%%%%%%%%%
\subsection{The ascending chain condition}

One guiding principle in the study of singularities of rational maps is that
there should not exist an infinite sequence of maps
$$
f_i\colon X_i\dashrightarrow\PP^n
$$
of fixed dimension whose singularities strictly improve. The precise meaning
of this statement depends on the invariant used to measure the singularities.

For the normalised graph fibre degree, this is immediate: the numbers
$\delta_{x_i}(f_i)$ are nonnegative integers, so they cannot form an infinite
strictly decreasing sequence.

For thresholds the statement is much subtler. Let the $X_i$ be klt varieties
of fixed dimension, let $x_i\in X_i$ be closed points, and assume that
$f_i^*H$ is $\Q$-Cartier near $x_i$. 
By the ACC for generalised lc thresholds \cite{Birkar-Zhang,HMX-ACC}, the 
thresholds
$$
\lambda_{x_i}(f_i)
$$
satisfy the ascending chain condition (ACC). In particular, there is no infinite
sequence of such rational maps for which
$$
\lambda_{x_1}(f_1)
<
\lambda_{x_2}(f_2)
<
\lambda_{x_3}(f_3)
<
\cdots.
$$
Thus, when larger thresholds are interpreted as milder singularities, the
singularities of the maps cannot improve indefinitely.

%%%%%%%%%%%%%%%%%%%%%%%%%%%%%%
\subsection{The toric case}

We treat the toric case using the notation introduced in the previous subsection.

\begin{prop}[The toric case]\label{p-rational-map-lc-threshold-toric-case}
Let $X=X_\sigma$ be a klt affine toric variety, let $x=x_\sigma$ be its closed torus-fixed point, and let
$$
f\colon X\dashrightarrow \mathbb P^n
$$
be a toric rational map. Assume that $f^*H$ is $\mathbb Q$-Cartier near $x$, where $H\subset\mathbb P^n$ is a hyperplane not containing the image of $X$.

Let $\Sigma$ be the fan of $Y^\nu$. Write
$$
E_f=\sum_{\rho\in\Sigma(1)}e_\rho D_\rho,
$$
where $D_\rho$ is the torus-invariant prime divisor corresponding to $\rho$. Then
$$
\lambda_x(f)=\min\{\frac{\psi_\sigma(v_\rho)}{e_\rho}\mid \rho\in\Sigma(1),\ e_\rho>0,\ x\in p(D_\rho)\}.
$$
Here $v_\rho$ is the primitive generator of $\rho$, and $\psi_\sigma$ is the support function of $K_X$ which calculates log discrepancies:
$$
a(D_v,X,0)=\psi_\sigma(v).
$$
If there is no ray with $e_\rho>0$ and $x\in p(D_\rho)$, then $\lambda_x(f)=+\infty$.
\end{prop}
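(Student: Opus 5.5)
The plan is to compute the generalised log discrepancies of $(X,tf^*H)$ with nef part $tq^*H$ on a toric model, and reduce the generalised lc condition to a check on torus-invariant divisors. Since $f$ is toric, we may take $H$ to be a torus-invariant hyperplane (a coordinate hyperplane $t_i=0$ not containing the image). This is allowed because $\lambda_x(f)$ depends only on the $\mathbb Q$-linear class of $H$, as noted above. Then $f^*H$, $q^*H$ and $E_f$ are all torus-invariant. Let $W\to Y^\nu$ be a toric resolution obtained by refining $\Sigma$ to a smooth fan $\Sigma'$. On $W$ we write
$$
K_W+B_t+tq_W^*H=\phi^*(K_X+tf^*H),\qquad \phi\colon W\to X.
$$
The boundary $B_t$ is torus-invariant, and the reduced toric boundary is snc on $W$. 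Therefore generalised lc over $x$ is equivalent to the coefficients of $B_t$ being $\le 1$ at every torus-invariant prime divisor $D_v$, $v\in\Sigma'(1)$, whose image contains $x$. This is the standard toric fact that toric sub-pairs supported on the boundary are computed by invariant divisors. It applies to $(W,B_t)$ because it is sub-snc, and any divisor over $W$ has log discrepancy at least that of a divisor already on the snc model, given the coefficients are $\le1$.

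Next compute the coefficients. Write $E_f$ on $W$ via the piecewise-linear function $\varphi$ on $|\sigma|$ with $\varphi(v)=\operatorname{mult}_{D_v}(\phi^*f^*H-q_W^*H)$. The pullback of the $\mathbb Q$-Cartier divisor $f^*H$ is linear on $\sigma$. The pullback of $q^*H$ is linear on each cone of $\Sigma$, because $q$ is a morphism from $Y^\nu$ and $q^*H$ is Cartier there. Hence $\varphi$ is linear on each cone of $\Sigma$ and takes the value $e_\rho$ at $v_\rho$. Similarly $K_X$ has support function $\psi_\sigma$, linear on $\sigma$, with $a(D_v,X,0)=\psi_\sigma(v)$. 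Thus
$$
a(D_v,X,tf^*H)=\psi_\sigma(v)-t\varphi(v).
$$
Note that $\psi_\sigma>0$ on $\sigma\setminus 0$ by klt. Also $\varphi\ge0$, since $E_f\ge0$ and $\varphi$ is linear on cones with nonnegative values on rays.

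The key step, and the main subtlety, is reducing from all $v\in\Sigma'(1)$ to the rays of $\Sigma$. For $v$ in a cone $\tau\in\Sigma$, write $v=\sum c_\rho v_\rho$ with $c_\rho\ge0$ over the rays of $\tau$. Then $\psi_\sigma(v)-t\varphi(v)=\sum c_\rho(\psi_\sigma(v_\rho)-te_\rho)$. This expression is $\ge0$ whenever each term is. Hence nonnegativity on the rays of $\Sigma$ implies it everywhere, and the converse is trivial.

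The locality condition "over $x$" also needs care: we must restrict to $v$ with $x\in\phi(D_v)$, i.e. $v$ in the interior of some face containing the relevant locus. We only use rays $\rho$ with $x\in p(D_\rho)$. Any $v$ with $x\in\phi(D_v)$ lies in the relative interior of a cone $\tau\in\Sigma$ all of whose rays satisfy the same condition. Rays with $e_\rho=0$ impose no constraint, since $\psi_\sigma>0$. Therefore generalised lc at $x$ holds iff $t\le\psi_\sigma(v_\rho)/e_\rho$ for all $\rho$ with $e_\rho>0$ and $x\in p(D_\rho)$. This gives the stated minimum, and $+\infty$ if there is no such ray.
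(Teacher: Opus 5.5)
Your proof is correct. It shares the paper's core computation: with a torus-invariant $H$, the coefficient of $D_v$ in the boundary is $1-\psi_\sigma(v)+t\varphi(v)$. Where you differ is in justifying sufficiency, i.e.\ why it is enough to check the rays of $\Sigma$.

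The paper stays on $Y^\nu$. Once $te_\rho\le\psi_\sigma(v_\rho)$ for every $\rho\in\Sigma(1)$, one has $B_t\le\Lambda$, the reduced toric boundary. Since $K_{Y^\nu}+\Lambda\sim 0$ is Cartier and $(Y^\nu,\Lambda)$ is lc, monotonicity of log discrepancies makes $(Y^\nu,B_t)$ sub-lc.

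You instead pass to a smooth toric refinement $W$ and use log smoothness of its toric boundary to reduce to torus-invariant divisors. You then use linearity of $\psi_\sigma-t\varphi$ on each cone of $\Sigma$ to propagate nonnegativity from the rays of $\Sigma$ to all rays of $\Sigma'$.

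The paper's route is shorter and needs no resolution. However, it leans on the lc property of toric pairs, which is itself usually proved by exactly your resolution argument. Your route is self-contained and gives the explicit formula $a(D_v,X,tf^*H)=\sum_\rho c_\rho\bigl(\psi_\sigma(v_\rho)-te_\rho\bigr)$ for every toric valuation $v$. This makes it transparent why the threshold is attained on $\Sigma(1)$.

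Your localisation at $x$ is also handled more carefully than in the paper. In fact the condition $x\in p(D_\rho)$ is vacuous: every nonempty torus-invariant closed subset of the affine toric variety $X_\sigma$ contains the fixed point $x_\sigma$. So generalised lc at $x$ is the same as generalised lc on all of $X$.

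Finally, replacing the given $H$ by a torus-invariant one does not change $E_f$. Replacing $H$ by $H+\operatorname{div}(g)$ changes both $p^*f^*H$ and $q^*H$ by $\operatorname{div}(q^*g)$, so $E_f$ is intrinsic to $f$. This is a point both you and the paper use implicitly.
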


\begin{proof}
Since $f$ is toric, its normalised graph $Y^\nu$ is toric and
$$
p\colon Y^\nu\to X
$$
is a toric birational morphism. choosing a toric hyperplane $H\subset \PP^n$, the divisor
$$
E_f=p^*f^*H-q^*H
$$
is the torus-invariant exceptional fixed part of $p^*f^*H$.

For $t\geq 0$, define the toric sub-pair $(Y^\nu,B_t)$ by
$$
K_{Y^\nu}+B_t=p^*K_X+tE_f.
$$
For every $\rho\in\Sigma(1)$, the coefficient of $D_\rho$ in $B_t$ is
$$
1-\psi_\sigma(v_\rho)+te_\rho,
$$
see \cite[\S11.4]{Cox-Little-Schenck}.
Thus the sub-lc condition along $D_\rho$ is
$$
te_\rho\leq\psi_\sigma(v_\rho).
$$

On the other hand, let $\Lambda$ be the toric boundary on $Y^\nu$, that is, the sum of the prime torus-invariant divisors on $Y^\nu$. If $B_t\le \Lambda$ for some $t$, then $K_{Y^\nu}+B_t$ is automatically sub-lc because $K_{Y^\nu}+\Lambda$ is lc. Therefore, $(X,tf^*H)$ is generalised lc iff $B_t\le \Lambda$. In particular, if $\lambda_x(f)<+\infty$, then $\lambda_x(f)$ is already calculated on $Y^\nu$ by the minimum in the statement of the proposition.  
\end{proof}

\begin{cor}[The toric surface case]\label{cor-rational-map-lc-threshold-toric-surface}
Let
$$
X=X_\sigma,
\qquad
\sigma=\langle u_1,u_2\rangle\subset N_{\mathbb R},
$$
be a klt affine toric surface, and let $x=x_\sigma$ be its torus-fixed
point. Let
$$
f\colon X\dashrightarrow\mathbb P^1
$$
be the nonconstant toric rational map induced by a homomorphism
$$
\alpha\colon N\longrightarrow\mathbb Z.
$$
Assume that $x$ is an indeterminacy point of $f$. 
Then
$$
\lambda_x(f)
=
\frac{1}{|\alpha(u_1)|}
+
\frac{1}{|\alpha(u_2)|}.
$$
\end{cor}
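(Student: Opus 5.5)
The plan is to apply Proposition \ref{p-rational-map-lc-threshold-toric-case} directly, using the explicit description of the normalised graph from Theorem \ref{thm:toric-normalised-graph-fibre-degree}. Since $x$ is an indeterminacy point, $\alpha(u_1)$ and $\alpha(u_2)$ lie on opposite sides of the origin. Say $\alpha(u_1)=a>0$ and $\alpha(u_2)=-b<0$; the formula is symmetric, so this loses nothing. By Theorem \ref{thm:toric-normalised-graph-fibre-degree}, the fan $\Sigma$ of $Y^\nu$ is the star subdivision of $\sigma$ along the primitive generator $w$ of $\ker\alpha\cap\operatorname{Int}(\sigma)$. Hence $\Sigma(1)=\{u_1,u_2,w\}$, and the only exceptional prime divisor is $D_w$, whose image is $x$. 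The rays $u_1,u_2$ give birational transforms of divisors on $X$, so $e_{u_i}=0$. The minimum in Proposition \ref{p-rational-map-lc-threshold-toric-case} therefore reduces to the single quotient $\psi_\sigma(w)/e_w$.

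First I compute $\psi_\sigma(w)$. The vector $bu_1+au_2$ lies in $\ker\alpha$ and in the interior of $\sigma$, so $w=(bu_1+au_2)/c$ for some rational $c>0$. On a simplicial cone, $\psi_\sigma$ is the linear function on $\sigma$ with $\psi_\sigma(u_1)=\psi_\sigma(u_2)=1$, since the toric boundary divisors have log discrepancy $1$. Hence
$$
\psi_\sigma(w)=\frac{a+b}{c}.
$$

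Next I compute $e_w$. Write $\alpha=\langle m_\alpha,\cdot\rangle$ with $m_\alpha\in M$, so that $f=(1:\chi^{m_\alpha})$. Take $H$ to be a torus-invariant point of $\PP^1$. Its pullback $q^*H$ is the torus-invariant Cartier divisor with support function $\min(0,\alpha)$ on $\Sigma$, up to a fixed sign convention. Its coefficients are therefore $0$ along $D_{u_1}$, $b$ along $D_{u_2}$, and $0$ along $D_w$, because $\alpha(w)=0$. Then $f^*H=p_*q^*H$ has coefficients $0$ and $b$ along $D_{u_1}$ and $D_{u_2}$. Since $\sigma$ is simplicial, $f^*H$ is $\mathbb Q$-Cartier, and $p^*f^*H$ is given by the linear function $\ell$ on $\sigma$ with $\ell(u_1)=0$ and $\ell(u_2)=b$. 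So the coefficient of $D_w$ in $p^*f^*H$ is
$$
\ell(w)=\frac{b\cdot 0+a\cdot b}{c}=\frac{ab}{c}.
$$
Subtracting the coefficient of $D_w$ in $q^*H$, which is $0$, gives $e_w=ab/c>0$. By the negativity lemma $E_f\ge0$, so the signs are consistent.

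Finally,
$$
\lambda_x(f)=\frac{\psi_\sigma(w)}{e_w}=\frac{(a+b)/c}{ab/c}=\frac1a+\frac1b,
$$
which is the claimed formula. The unknown scaling $c$ cancels, so the primitivity of $w$ never needs to be computed. The main point requiring care is the sign and normalisation convention for support functions and the identification of $E_f$ as the exceptional part of $p^*f^*H-q^*H$. I would check this on the smooth example $\sigma=\langle e_1,e_2\rangle$ with $\alpha=(1,-1)$: there $f=(x:y)$, whose base ideal is the maximal ideal, so $\lambda=\operatorname{lct}(m_x)=2=1+1$, as the formula predicts.
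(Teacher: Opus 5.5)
Your proposal is correct and follows the paper's proof essentially step for step. Both arguments use Proposition \ref{p-rational-map-lc-threshold-toric-case} to reduce to the single exceptional ray $w$, compute $a(D_w,X,0)$ and the coefficient of $D_w$ in $E_f$ by linear interpolation of the relevant support functions on $\sigma$, and take the ratio; the only cosmetic difference is that you write $w=(bu_1+au_2)/c$ explicitly, whereas the paper writes $w=c_1u_1+c_2u_2$ and uses $ac_1=bc_2$.
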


\begin{proof}
After interchanging $u_1$ and $u_2$, we have
$$
\alpha(u_1)>0>\alpha(u_2).
$$
Let $w\in N$ be the primitive generator of $\ker(\alpha)$ lying in the
interior of $\sigma$, and write
$$
a=\alpha(u_1)>0,
\qquad
b=-\alpha(u_2)>0.
$$
There are positive rational numbers $c_1,c_2$ such that
$$
w=c_1u_1+c_2u_2.
$$
Applying $\alpha$ gives
$$
ac_1=bc_2.
$$

The fan of $\mathbb P^1$ consists of the rays
$\mathbb R_{\geq 0}$ and $\mathbb R_{\leq 0}$. Its inverse image under
$\alpha$ subdivides $\sigma$ into the two cones
$$
\langle u_1,w\rangle
\qquad\text{and}\qquad
\langle w,u_2\rangle.
$$
Thus the fan of the normalised graph $Y^\nu$ is obtained by inserting the
ray $\mathbb R_{\geq 0}w$. Let $E_w$ be the corresponding exceptional
divisor.

Let $H\in\mathbb P^1$ be the point at infinity, i.e. the toric divisor corresponding to the ray $\mathbb R_{\leq 0}$.
On $Y^\nu$ we have
$$
q^*H=bD_{u_2},
$$
while on $X$ we have
$$
f^*H=bD^X_{u_2},
$$
where $D_{u_2},D^X_{u_2}$ denote the torus-invariant divisors corresponding to
$u_2$ on $Y^\nu$ and $X$, respectively.

The support function $\psi_{u_2}$ of $D^X_{u_2}$ satisfies
$$
\psi_{u_2}(u_1)=0,
\qquad
\psi_{u_2}(u_2)=-1.
$$
Since $w=c_1u_1+c_2u_2$, we obtain
$$
\psi_{u_2}(w)=-c_2.
$$
It follows that
$$
p^*f^*H=q^*H+bc_2E_w,
$$
and hence
$$
E_f=bc_2E_w.
$$

The toric log discrepancy function $\psi_\sigma$ satisfies
$$
\psi_\sigma(u_1)=\psi_\sigma(u_2)=1.
$$
Therefore
$$
a(E_w,X,0)=\psi_\sigma(w)=c_1+c_2.
$$

By the preceding proposition, the only ray contributing to the computation
of $\lambda_x(f)$ is the ray generated by $w$. Thus
$$
\lambda_x(f)=\frac{c_1+c_2}{bc_2}.
$$
Using $ac_1=bc_2$, we obtain
$$
\lambda_x(f)
=
\frac{1}{a}+\frac{1}{b}
=
\frac{1}{|\alpha(u_1)|}
+
\frac{1}{|\alpha(u_2)|}.
$$
\end{proof}

\begin{exa}[The map $(x^m:y^n)$ on $\A^2$]\label{exa-map-(xm:yn)-A2}
Let
$$
X=\A^2=\operatorname{Spec}k[x,y],
\qquad
o=(0,0),
$$
and consider the rational map
$$
f\colon X\dashrightarrow\PP^1,
\qquad
f=(x^m:y^n),
$$
where $m,n>0$.

The cone of $\A^2$ is
$$
\sigma=\langle u_1,u_2\rangle
=
\langle(1,0),(0,1)\rangle,
$$
and the homomorphism of lattices corresponding to $f$ is
$$
\alpha\colon N=\Z^2\longrightarrow\Z,
\qquad
\alpha(a,b)=ma-nb.
$$
Thus
$$
\alpha(u_1)=m,
\qquad
\alpha(u_2)=-n.
$$
In particular, $o$ is an indeterminacy point of $f$. By the toric surface
corollary,
$$
\lambda_o(f)
=
\frac{1}{|\alpha(u_1)|}
+
\frac{1}{|\alpha(u_2)|}
=
\frac{1}{m}+\frac{1}{n}.
$$

For example,
$$
\lambda_o(x:y^n)=1+\frac{1}{n},
\qquad
\lambda_o(x^2:y^2)=1,
$$
and
$$
\lambda_o(x^2:y^3)=\frac{5}{6},
\qquad
\lambda_o(x^n:y^n)=\frac{2}{n}.
$$
\end{exa}

\medskip

\begin{exa}[Map from cone over a rational curve of degree $n$]
Let $n\geq 2$ and let
$$
\sigma=\langle u_1,u_2\rangle
=
\langle(1,0),(-1,n)\rangle
\subset N_{\mathbb R}.
$$
Let $X=X_\sigma$ be the corresponding affine toric surface, and let
$x=x_\sigma$ be its torus-fixed point. Consider the toric rational map
$$
f\colon X\dashrightarrow\PP^1
$$
induced by the homomorphism
$$
\alpha\colon N=\Z^2\longrightarrow\Z,
\qquad
\alpha(a,b)=a.
$$
We have
$$
\alpha(u_1)=1,
\qquad
\alpha(u_2)=-1.
$$
Thus $x$ is an indeterminacy point of $f$. By the toric surface corollary,
$$
\lambda_x(f)
=
\frac{1}{|\alpha(u_1)|}
+
\frac{1}{|\alpha(u_2)|}
=
1+1
=
2.
$$

The kernel of $\alpha$ is generated by
$$
w=(0,1),
$$
which lies in the interior of $\sigma$. Hence the normalised graph is
obtained by subdividing $\sigma$ along the ray $\mathbb R_{\geq 0}w$.
The resulting cones
$$
\langle u_1,w\rangle
\qquad\text{and}\qquad
\langle w,u_2\rangle
$$
are smooth, since
$$
\det(u_1,w)=1,
\qquad
\det(w,u_2)=1.
$$
Thus the normalised graph is the minimal resolution of $X$.
\end{exa}

%%%%%%%%%%%%%%%%%%%%%%%%%%%%%%
\subsection{Non-toric computations}

We work out some non-toric examples.

\begin{exa}
First consider
$$
f\colon \mathbb A^2\dashrightarrow\mathbb P^1,
\qquad f=(x^7:g_5), \qquad
g_5=y(y-x)(y-2x)(y-3x)(y-4x)
$$
which is singular at the origin $o$.
The base ideal is
$$
I_f=(x^7,g_5).
$$
Blow up the origin, and let $E_0$ be the exceptional divisor. Since
$$
\operatorname{ord}_o(x^7)=7,
\qquad
\operatorname{ord}_o(g_5)=5,
$$
we have
$$
\operatorname{ord}_{E_0}(I_f)=5,
\qquad
a(E_0,\A^2,0)=2.
$$

On the chart $x=u$, $y=uv$, pullback of $x^7$ gives $u^7$ and pullback of $g_5$ gives
$$
u^5v(v-1)(v-2)(v-3)(v-4),
$$
so the pullback of $I_f$ is
$$
u^5\bigl(u^2,v(v-1)(v-2)(v-3)(v-4)\bigr).
$$
Thus there are five remaining base points on $E_0$, corresponding to
$$
v=0,1,2,3,4.
$$
Near any one of these points, with local parameter $t=v-a$, the ideal is,
up to multiplication by a unit,
$$
u^5(u^2,t).
$$

Blowing up this point produces an exceptional divisor $E_{a,1}$ satisfying
$$
\operatorname{ord}_{E_{a,1}}(I_f)=6,
\qquad
a(E_{a,1},\A^2,0)=3.
$$
One further blow-up principalises the ideal and produces an exceptional
divisor $E_{a,2}$ satisfying
$$
\operatorname{ord}_{E_{a,2}}(I_f)=7,
\qquad
a(E_{a,2},\A^2,0)=4.
$$
Consequently, the log discrepancy-to-order ratios appearing on this log
resolution are
$$
\frac{2}{5},
\qquad
\frac{3}{6}=\frac{1}{2},
\qquad
\frac{4}{7}.
$$
Their minimum is $2/5$. Therefore
$$
\lambda_0(f)=\frac{2}{5}.
$$
\end{exa}

\begin{exa}
Similarly, consider
$$
h\colon\mathbb A^2\dashrightarrow\mathbb P^1,
\qquad h=(x^n:y(y-x)), \qquad
n>2
$$
which is singular at the origin $o$. 
Its base ideal is
$$
I_h=(x^n,y(y-x)).
$$
The blow-up of the origin gives an exceptional divisor $E_0$ with
$$
\operatorname{ord}_{E_{0}}(I_h)=2,
\qquad
a(E_{0},\A^2,0)=2.
$$

On the chart $x=u$, $y=uv$, the pullback ideal is
$$
u^2(u^{n-2},v(v-1)).
$$
Hence there are two remaining base points on $E_0$, corresponding to
$v=0$ and $v=1$. Near either point, with local parameter $t$, the ideal
is, up to multiplication by a unit,
$$
u^2(u^{n-2},t).
$$

Principalising this ideal requires a sequence of $n-2$ point blow-ups.
If $E_{a,k}$ is the exceptional divisor produced by the $k$th blow-up in
one of these two sequences, where $1\leq k\leq n-2$, then
$$
\operatorname{ord}_{E_{a,k}}(I_h)=k+2,
\qquad
a(E_{a,k},\A^2,0)=k+2.
$$
Thus every exceptional divisor appearing after the first blow-up has log
discrepancy-to-order ratio
$$
\frac{a(E_{a,k},\A^2,0)}
{\operatorname{ord}_{E_{a,k}}(I_h)}
=1.
$$
The divisor $E_0$ also gives the ratio $2/2=1$. Therefore
$$
\lambda_0(h)=1.
$$
\end{exa}

%%%%%%%%%%%%%%%%%%%%%%%%%%%%%%
\subsection{Decomposition of maps via thresholds}

Let
$$
f\colon X\dashrightarrow Z
$$
be a birational map over a base $U$, where $X$ and $Z$ are projective over
$U$ and $X$ is $\Q$-factorial and klt. Choose a resolution
$$
p\colon X'\longrightarrow X,
\qquad
q\colon X'\longrightarrow Z
$$
of $f$, and let
$$
M':=q^*M_Z,
$$
where $M_Z$ is a divisor on $Z$ which is ample over $U$. The divisor $M'$
defines a nef b-divisor $M$ over $X$.

Fix $\epsilon\geq 0$ and assume that $(X,0)$ is $\epsilon$-lc. Define the
generalised $\epsilon$-lc threshold
$$
\lambda^\epsilon(M,X)
:=
\sup\{t\geq 0\mid (X,tM)\text{ is generalised $\epsilon$-lc}\}.
$$
The cases $\epsilon=0$ and $\epsilon=1$ give the generalised lc and
canonical thresholds, respectively.

If the threshold is finite, one takes a suitable crepant model extracting
certain divisors which compute it. Repeating the construction produces models
whose generalised $\epsilon$-lc thresholds form a strictly increasing
sequence
$$
\lambda_0<\lambda_1<\lambda_2<\cdots.
$$
Assuming the ACC for generalised $\epsilon$-lc thresholds, this process
terminates. At the final step the threshold is $+\infty$, which means that
the nef part descends to the resulting model. Since $M_Z$ is 
ample, the induced rational map to $Z$ is then a morphism. Thus the
threshold process resolves $f$.

The required ACC for generalised $\epsilon$-lc thresholds is conjectural in general, except in dimension $2$.
But for $\epsilon=0$ it is known in any dimension \cite{Birkar-Zhang, HMX-ACC}. A similar increasing-threshold termination argument is used
in \cite[3.10]{Birkar-boundedness-volume-generalised-pairs}.

\begin{rem}[The Sarkisov program and Cremona groups]
Let
$$
f\colon X\dashrightarrow Z
$$
be a birational map between Mori fibre spaces
$$
X\longrightarrow S
\qquad\text{and}\qquad
Z\longrightarrow T
$$
over a base $U$. The Sarkisov program decomposes $f$ into a finite sequence
of Sarkisov links. The threshold construction above is analogous to the Noether--Fano method, which, in modern language, uses generalised canonical thresholds in the study of birational maps of smooth surfaces and terminal threefolds (cf. \cite{Corti}).

A classical example is the quadratic Cremona transformation
$$
f\colon X=\PP^2\dashrightarrow Z=\PP^2,
\qquad
[x_0:x_1:x_2]
\longmapsto
[x_1x_2:x_0x_2:x_0x_1].
$$
It is resolved by blowing up the three coordinate points and then
contracting the strict transforms of the three coordinate lines. These
strict transforms are pairwise disjoint $(-1)$-curves.

Let $M_Z$ be a general hyperplane on $Z$. Then $M=f^*M_Z$ is a conic
passing through the three coordinate points. If
$$
p\colon Y\longrightarrow X
$$
is the blowup of these points, with exceptional divisors
$E_1,E_2,E_3$, and 
$$
q\colon Y\longrightarrow Z
$$ 
the induced morphism, then
$$
p^*M=q^*M_Z+E_1+E_2+E_3
$$
and
$$
K_Y=p^*K_X+E_1+E_2+E_3.
$$
Thus
$$
K_Y+q^*M_Z=p^*(K_X+M).
$$
It follows that the generalised canonical threshold is
$$
\lambda^1(M,X)=1,
$$
and its canonical centres are the three coordinate points, equivalently
the pairwise intersections of the coordinate lines. The blowup
$p\colon Y\to X$ is the corresponding crepant model. On $Y$ the map to
$Z$ is a morphism, so the nef part descends and the resulting threshold is
$+\infty$.
\end{rem}

%%%%%%%%%%%%%%%%%%%%%%%%%%%%%%
%%%%%%%%%%%%%%%%%%%%%%%%%%%%%%
\section{\bf Log discrepancies of normalised graphs}
\label{s-log-disc-normalised-graphs}

Given a rational map
$$
f\colon X\dashrightarrow \PP^n,
$$
one way to measure its singularities is by measuring the singularities of the normalised graph $Y^\nu$. However, when $X$ is singular, the singularities of $Y^\nu$ may reflect the singularities of $X$ rather than those of the map. Thus this approach is most useful when $X$ has very mild singularities, and especially when $X$ is smooth.

Even when $X$ is smooth and $f$ has mild singularities as measured by the normalised graph fibre degree and the generalised lc threshold, the normalised graph $Y^\nu$ can have deep singularities. Thus the singularities of $Y^\nu$ can detect phenomena which are invisible to the preceding invariants. In this sense, they give a finer and more sensitive measure of the singularities of the map.

In general, when $K_{Y^\nu}$ is $\Q$-Cartier we define the \emph{normalised graph mld} invariant 
$$
\theta_x(f)=\inf\{a(D,Y^\nu,0) \mid \mbox{$D$ prime divisor over $Y^\nu$ with centre $C_XD$ containing $x$}\}. 
$$
In particular,
$$
\theta_x(f)\ge 0
$$
if and only if $Y^\nu$ is lc over a neighbourhood of $x$. When this holds, we may also consider stronger relative properties, such as whether $Y^\nu$ is of Fano type over a neighbourhood of $x$. 

\begin{rem}
For surfaces we can avoid the $\mathbb Q$-Cartier assumption on $K_{Y^\nu}$ by using numerical log discrepancies. Let $\pi\colon W\to V$ be the minimal resolution of a normal surface, with exceptional curves $E_i$, and let $\Delta=\sum d_iE_i$ be the unique exceptional $\mathbb Q$-divisor determined by
$$
(K_W+\Delta)\cdot E_i=0
$$
for every $i$. We say that $V$ is \emph{numerically klt}, respectively \emph{numerically lc}, if $d_i<1$, respectively $d_i\leq1$, for every $i$; see \cite[Notation 4.1]{Kollar-Mori}. These notions make sense without assuming that $K_V$ is $\mathbb Q$-Cartier. Moreover, for normal surfaces numerical klt and numerical lc singularities are klt and lc, respectively; in particular the required $\mathbb Q$-Cartier property is then automatic. Thus in the surface examples below we may detect klt, lc, or non-lc singularities directly from the divisor $\Delta$.
\end{rem}

In the rest of this section, we give several examples illustrating the behaviour of this invariant.  
See \cite{Ambro-toric-mld} for discussions on toric log discrepancies that will be used below. 

%%%%%%%%%%%%%%%%%%% 
\subsection{A toric map of degree one with $X,Y^\nu$ having deep singularities} 

\begin{exa}\label{exa-toric-map-degree-one-X-Y-deep-sing}
Let $n\ge 2$, let $N=\mathbb Z^2$, and consider the cone
$$
\sigma=\langle u_1,u_2\rangle\subset N_{\mathbb R},
$$
where
$$
u_1=(-1,n)
\qquad\mbox{and}\qquad
u_2=(1,0).
$$
Let
$
X=X_\sigma
$
be the corresponding affine toric surface, and denote its closed torus-fixed point by $x=x_\sigma$.

Consider the toric rational map
$$
f\colon X\dashrightarrow\PP^1
$$
induced by the homomorphism
$$
\alpha\colon N\longrightarrow\mathbb Z,
\qquad
\alpha(a,b)=a-b.
$$
We have
$$
\alpha(u_1)=-(n+1)
\ 
\mbox{and} \
\alpha(u_2)=1.
$$
Thus $\alpha$ takes opposite signs on the two rays of $\sigma$, so $f$ is singular at $x$.

The kernel of $\alpha$ is generated by the primitive vector
$$
w=(1,1).
$$
Since $w$ is in the interior of $\sigma$, the normalised graph
$$
p\colon Y^\nu\longrightarrow X
$$
is the toric modification obtained by subdividing $\sigma$ along the ray $\mathbb R_{\ge 0}w$. Its two maximal cones are
$$
\sigma_1=\langle u_1,w\rangle
\ 
\mbox{and} \ 
\sigma_2=\langle w,u_2\rangle.
$$
As we have seen before, the minimal log discrepancy of $X$ at $x$ is 
$
\frac{2}{n}.
$

We next calculate the singularities of $Y^\nu$ over $x$. The cone $\sigma_2$ is smooth because
$$
\left|\det(w,u_2)\right|=1.
$$
Thus the only singular torus-fixed point of $Y^\nu$ over $x$ corresponds to the cone $\sigma_1$.
The lattice point $(0,1)$ belongs to the interior of $\sigma_1$ and its corresponding divisor $D_{(0,1)}$ satisfies
$$
a(D_{(0,1)},Y^\nu,0)=\frac{2}{n+1}
$$
as 
$$
(0,1)=\frac{1}{n+1}u_1+\frac{1}{n+1}w.
$$

Now let $(a,b)\in N$ be in the interior of $\sigma_1$. Then  
$$
(a,b)=su_1+tw
$$
where we can calculate
$$
s=\frac{b-a}{n+1}>0, \qquad t=a+s>0.
$$
Therefore
$$
a(D_{(a,b)},Y^\nu,0)=s+t\ge\frac{2}{n+1}.
$$
It follows that  
$$
\theta_x(f)=\frac{2}{n+1},
$$
that is, $Y^\nu$ is exactly $\frac{2}{n+1}$-lc over $x$.

We calculate the normalised graph fibre degree using Theorem~\ref{thm:toric-normalised-graph-fibre-degree}. 
We first calculate $\mu_\sigma(w)$ as in the theorem. Let $M$ be the dual lattice and consider
$$
m=(0,1)\in M.
$$
Then
$$
\langle m,u_1\rangle=n
\qquad\mbox{and}\qquad
\langle m,u_2\rangle=0,
$$
so
$$
m\in\sigma^\vee\cap M.
$$
Moreover,
$
\langle m,w\rangle=1.
$
Therefore
$
\mu_\sigma(w)=1.
$
More precisely, if $E=D_w$ is the exceptional divisor of the normalised graph
$$
p\colon Y^\nu\longrightarrow X,
$$
then the fundamental cycle 
$
[Y^\nu_x]=E,
$
and the restriction
$
q|_E\colon E\longrightarrow\PP^1
$
has degree one. Thus
$$
E\simeq\PP^1
\qquad\mbox{and}\qquad
\mathcal{O}_{Y^\nu}(1)|_E
\simeq
\mathcal{O}_{\PP^1}(1).
$$
In particular, $\delta_x(f)=1$. In fact, this implies $f$ is linear type at $x$, by Theorem \ref{t-degree-one-linear-type-rational-surfaces} below.

Finally we calculate $\lambda_x(f)$. We have
$$
|\alpha(u_1)|=n+1
\
\mbox{and} \
|\alpha(u_2)|=1.
$$
Therefore, by Corollary \ref{cor-rational-map-lc-threshold-toric-surface}, 
$$
\lambda_x(f)=1+\frac{1}{n+1}=\frac{n+2}{n+1}.
$$

We have therefore shown that
$$
\mld_x(X)=\frac{2}{n},
\qquad
\theta_x(f)=\frac{2}{n+1},
\qquad
\delta_x(f)=1,
\qquad
\lambda_x(f)=\frac{n+2}{n+1}.
$$
As $n$ tends to infinity, both $X$ and $Y^\nu$ have arbitrarily deep klt singularities, although $f$ remains to have the simplest kind of map singularities, that is, linear type.
\end{exa}

%%%%%%%%%%%%%%%%%%%
\subsection{A toric map of degree one with canonical $Y$}

\begin{exa}
\label{exa-rational-map-(x : y^n)}
Consider the rational map
$$
f\colon\A^2\bir\PP^1,
\qquad
f=(x:y^n),
\qquad
n\ge 1.
$$
It is {singular} only at the origin $o=(0,0)$. If $(u:v)$ are homogeneous coordinates on $\PP^1$, then the graph
$$
Y\subset\A^2\times\PP^1
$$
is defined by
$$
uy^n=vx.
$$
The projection
$$
\pi\colon Y\longrightarrow\A^2
$$
is the blowup of the ideal
$$
I=(x,y^n)\subset k[x,y].
$$

The surface $Y$ is normal. Indeed, on the chart $v=1$, it is given by
$$
x=uy^n,
$$
so this chart is smooth. On the chart $u=1$, it is given by
$$
y^n=vx,
$$
which has an $A_{n-1}$ singularity at the origin. In particular, $Y$ has canonical singularities and
$$
Y^\nu=Y.
$$
Torically, $\pi$ is the weighted blowup of $\A^2$ with weights $(n,1)$ where the $x=0$ line corresponds to the vector $(1,0)$ in the standard cone defining $\A^2$.

The scheme-theoretic fibre over $o$ is
$$
Y_o={o}\times\PP^1\simeq\PP^1.
$$
Moreover, the morphism
$$
Y_o\longrightarrow\PP^1
$$
induced by the second projection is an isomorphism. Therefore
$$
\mathcal{O}_Y(1)|_{Y_o}\simeq\mathcal{O}_{\PP^1}(1),
$$
so $f$ is linear type at $o$.

Let $E$ be the exceptional Cartier divisor determined by
$$
I\mathcal{O}_Y=\mathcal{O}_Y(1)=\mathcal{O}_Y(-E),
$$
and let $C=Y_o$ be the reduced exceptional curve. On the chart $v=1$, we have
$$
x=uy^n,
$$
and hence
$$
I\mathcal{O}_Y=(uy^n,y^n)=(y^n).
$$
Since $C$ is defined generically by $y=0$, it follows that
$$
E=nC.
$$
Thus
$$
I\mathcal{O}_Y=
\mathcal{O}_Y(1)=
\mathcal{O}_Y(-nC).
$$

Resolving $f$ requires $n$ successive point blowups. The exceptional locus is a chain
$$
E_1\mathbin{-}E_2\mathbin{-}\cdots\mathbin{-}E_{n-1}\mathbin{-}E_n,
$$
where
$$
E_i^2=-2
$$
for $1\le i\le n-1$, while
$$
E_n^2=-1.
$$
The induced morphism to $Y$ contracts $E_1,\ldots,E_{n-1}$ and maps $E_n$ isomorphically onto $C$. The contracted chain is the minimal resolution of the $A_{n-1}$ singularity of $Y$.
\end{exa}

%%%%%%%%%%%%%%%%%%% 
\subsection{A map of degree two on $\A^2$ with $Y^\nu$ having deep klt singularities}

\begin{exa}\label{exa-map-degree-two-on-A2-Y-deep-klt}
Consider the rational map
$$
f\colon\A^2\bir\PP^1,
\qquad
f=(x^n:y(y-x)),
\qquad
n>2.
$$
This is {singular} only at the origin $o=(0,0)$. Let
$$
\phi_1\colon X_1\longrightarrow X=\A^2
$$
be the blowup of $o$, with exceptional divisor $E_1$, and let
$$
f_1\colon X_1\bir\PP^1
$$
be the induced map. In the chart $x=u$, $y=uv$, we have
$$
f_1=(u^n:u^2v(v-1))=(u^{n-2}:v(v-1)).
$$
Thus $f_1$ is undefined at the two points
$$
P_1=(0,0)
\qquad\mbox{and}\qquad
P_2=(0,1)
$$
on $E_1$. In the other chart $x=wz$, $y=z$, the induced map is
$$
f_1=(w^nz^{n-2}:1-w),
$$
which is undefined at $(1,0)$, corresponding to $P_2$.

Near each $P_i$, there are local coordinates $\alpha,\beta$ in which $f_1$ is given by
$$
(\alpha^{n-2}:\beta).
$$
Hence each $P_i$ is resolved by a sequence of $n-2$ blowups, as in Example~\ref{exa-rational-map-(x : y^n)}.

Let
$$
W\longrightarrow X
$$
be the resulting resolution. Its exceptional locus consists of a central $(-3)$-curve, with two identical chains attached to it. Each chain consists of $n-3$ curves of self-intersection $-2$ followed by a terminal $(-1)$-curve. When $n=3$, each chain consists only of the terminal $(-1)$-curve.
\begin{center}
\begin{tikzpicture}[
every node/.style={circle,draw,inner sep=2pt,minimum size=20pt},
scale=1.1
]
\node (Llast) at (0,0) {$-1$};
\node (Lm) at (1.5,0) {$-2$};
\node (Ldots) at (3,0) {$\cdots$};
\node (L1) at (4.5,0) {$-2$};
\node (E1) at (6,0) {$-3$};
\node (R1) at (7.5,0) {$-2$};
\node (Rdots) at (9,0) {$\cdots$};
\node (Rm) at (10.5,0) {$-2$};
\node (Rlast) at (12,0) {$-1$};

\draw
(Llast) -- (Lm) -- (Ldots) -- (L1) -- (E1)
-- (R1) -- (Rdots) -- (Rm) -- (Rlast);
\end{tikzpicture}
\end{center}

Let
$$
h\colon W\longrightarrow Y^\nu
$$
be the induced morphism to the normalised graph. The morphism $W\to\PP^1$ is constant on the central $(-3)$-curve and on all the $(-2)$-curves, but is nonconstant on the two terminal $(-1)$-curves. Therefore $h$ contracts precisely the central $(-3)$-curve and the two attached chains of $(-2)$-curves. 

Denote the central $(-3)$-curve by $C_0$, and denote the $(-2)$-curves at distance $i$ from $C_0$ by
$$
C_i^+
\qquad\mbox{and}\qquad
C_i^-,
\qquad
1\le i\le n-3.
$$
Write
$$
K_W=
h^*K_{Y^\nu}
+
a_0C_0
+
\sum_{i=1}^{n-3}a_i(C_i^++C_i^-).
$$
Intersecting with the contracted curves gives
$$
a_0=-\frac{n-2}{n}, \qquad
a_i=-\frac{n-2-i}{n}.
$$
Thus
$$
a(C_0,Y^\nu,0)=\frac{2}{n},
$$
while
$$
a(C_i^\pm,Y^\nu,0)=\frac{i+2}{n}.
$$
Therefore
$$
\theta_o(f)=\mld(Y^\nu)\ \mbox{over }o=\frac{2}{n}.
$$

On the other hand, for a general hyperplane $H\subset\PP^1$, the divisor $f^*H$ is locally defined at $o$ by
$$
ax^n+by(y-x)=0,
$$
where $b\neq 0$. Hence
$$
\mu_of^*H=2.
$$
By Theorem~\ref{t-norm-graph-fibre-degree-mult-hyperplane-pullback},
$$
\delta_o(f)=2.
$$
The same resolution shows that every log discrepancy-to-order ratio for the base ideal
$$
I_f=(x^n,y(y-x))
$$
is equal to $1$. Hence
$$
\lambda_o(f)=1.
$$

Thus
$$
\delta_o(f)=2,
\qquad
\lambda_o(f)=1,
\qquad
\theta_o(f)=\frac{2}{n}.
$$
In particular, the normalised graph fibre degree and the generalised lc threshold remain fixed, while the singularities of $Y^\nu$ become arbitrarily deep as $n$ increases.
\end{exa}

%%%%%%%%%%%%%%%%%%% 
\subsection{A toric map of degree two on $\A^2$ with $Y^\nu$ having deep singularities}

\begin{exa}\label{exa-toric-map-degree-two-A2-Y-deep-klt}
Let $n\ge 2$ and consider the rational map
$$
f\colon\A^2\bir\PP^2,
\qquad
f=(xy:x^{n+1}:y^{n+1}),
$$
which is {singular} only at the origin $o=(0,0)$.

Let $H\subset\PP^2$ be a general hyperplane. Then $f^*H$ is defined near $x$ by
$$
axy+bx^{n+1}+cy^{n+1}=0,
$$
where $a\neq 0$. Hence
$$
\mu_of^*H=2.
$$
By Theorem \ref{t-norm-graph-fibre-degree-mult-hyperplane-pullback}, the normalised graph fibre degree 
$$
\delta_x(f)=2.
$$

We next calculate the generalised lc threshold. The map $f$ is induced by the homomorphism
$$
\alpha\colon\mathbb Z^2\longrightarrow\mathbb Z^2,
\qquad
\alpha(a,b)=(na-b,-a+nb).
$$
The fan of the normalised graph $Y^\nu$ is obtained from the first quadrant by inserting the rays generated by
$$
w_1=(n,1)
\qquad\mbox{and}\qquad
w_2=(1,n)
$$
which are mapped by $\alpha$ to the rays generated by $(1,0)$ and $(0,1)$, respectively. 

This time take $H\subset \PP^2$ to be the divisor corresponding to the vector $(-1,-1)$ in the fan of $\PP^2$.  
We need the coefficients of the exceptional fixed part 
$$
E_f=p^*f^*H-q^*H. 
$$ 
Using the support function of $H$ we can see that 
$$q
^*H=D_{(1,0)}+D_{(0,1)}.
$$ 
Next using the support function of $f^*H=p_*q^*H$ we can see that 
$$
E_f=(n+1)D_{w_1}+(n+1)D_{w_2}.
$$

Since the toric log discrepancy function of $\A^2$ is
$$
\psi(a,b)=a+b,
$$
we have
$$
\psi(w_1)=\psi(w_2)=n+1.
$$
The toric threshold formula of Proposition \ref{p-rational-map-lc-threshold-toric-case} therefore gives
$$
\lambda_o(f)=1.
$$

It remains to calculate the singularities of $Y^\nu$. Its maximal cones are
$$
\langle(1,0),w_1\rangle,
\qquad
\sigma_n:=\langle w_1,w_2\rangle,
\qquad
\langle w_2,(0,1)\rangle.
$$
The first and third cones are smooth, so the only singular point of $Y^\nu$ corresponds to $\sigma_n$.

The toric log discrepancy function on $\sigma_n$ is the linear function $\ell_n$ satisfying
$$
\ell_n(w_1)=\ell_n(w_2)=1.
$$
Thus
$$
\ell_n(a,b)=\frac{a+b}{n+1}.
$$
The lattice point $(1,1)$ lies in $\operatorname{Int}(\sigma_n)$ and
$$
\ell_n(1,1)=\frac{2}{n+1}.
$$
On the other hand, every lattice point $(a,b)\in\operatorname{Int}(\sigma_n)$ satisfies $a,b>0$, and hence $a+b\ge 2$. Therefore
$$
\theta_o(f)=\mld(Y^\nu)\ \mbox{over }o=\frac{2}{n+1}.
$$
Thus
$$
\delta_o(f)=2,
\qquad
\lambda_o(f)=1,
\qquad
\theta_o(f)=\frac{2}{n+1}.
$$
In particular, the normalised graph fibre degree and the generalised lc threshold remain fixed, while the singularities of $Y^\nu$ become arbitrarily deep klt as $n$ tends to infinity.
\end{exa}

%%%%%%%%%%%%%%%%%%% 
\subsection{A map of degree 5 on $\A^2$ with non-lc $Y^\nu$}

\begin{exa}\label{exa-map-degree-5-A2-non-lc-Y}
We give an example whose normalised graph is not lc. Consider
$$
f\colon \A^2\bir\PP^1,
\qquad
f=\bigl(x^7:y(y-x)(y-2x)(y-3x)(y-4x)\bigr),
$$
which is {singular} only at the origin $o$. For a general point $H\in\PP^1$, the curve $f^*H$ has multiplicity $5$ at $o$. Thus, by Theorem \ref{t-norm-graph-fibre-degree-mult-hyperplane-pullback},
$$
\delta_o(f)=5.
$$

Blow up $o$, and denote the exceptional curve by $E_1$. On the chart
$$
x=u,\qquad y=uv,
$$
the induced map is
$$
\bigl(u^2:v(v-1)(v-2)(v-3)(v-4)\bigr).
$$
Hence it has five singular points $P_1,\dots,P_5$ on $E_1$, corresponding to
$$
v=0,1,2,3,4.
$$
Near each of them, the map has the local form
$$
(\alpha^2:\beta).
$$

On the chart $x=wz$ and $y=z$, the induced map is 
$$
\bigl(w^7z^2 : (1-w)(1-2w)(1-3w)(1-4w)\bigr).
$$
The singular points are given by $z=0$ and $w=1,\frac12,\frac13,\frac14$ corresponding to $P_2,\dots,P_5$.

Resolving each point $P_i$ requires two blowups: the first produces a $(-2)$-curve and the second a $(-1)$-curve. Thus the exceptional locus of the resulting resolution
$$
W\longrightarrow\A^2
$$
consists of a central curve $E_1$ with
$$
E_1^2=-6,
$$
together with five arms of the form
\begin{center}
\begin{tikzpicture}[baseline=-0.5ex]
\node[circle,draw,inner sep=2pt,minimum size=18pt] (E) at (0,0) {$-2$};
\node[circle,draw,inner sep=2pt,minimum size=18pt] (F) at (1.2,0) {$-1$};
\draw (E)--(F);
\end{tikzpicture}
\end{center}

Let
$$
h\colon W\longrightarrow Y^\nu
$$
be the induced morphism to the normalised graph. The morphism $h$ contracts the central $(-6)$-curve and the five $(-2)$-curves, while each final $(-1)$-curve maps nontrivially to $\PP^1$ and is not contracted.

Write the contracted $(-2)$-curves as $E_2,\ldots,E_6$. Consider 
$$
K_W+e_1E_1+\sum_{i=2}^6e_iE_i\equiv 0/Y^\nu.
$$
By symmetry, $e_2=\cdots=e_6=:e$. Intersecting with $E_1$ and $E_2$ gives
$$
4-6e_1+5e=0,
\ \
e_1-2e=0.
$$
Therefore
$$
e_1=\frac{8}{7}
\qquad\mbox{and}\qquad
e_2=\cdots=e_6=\frac{4}{7}.
$$
Since the coefficient of $E_1$ is greater than one, $Y^\nu$ is not lc.

For comparison, the normalised graph of
$$
\bigl(x^6:y(y-x)(y-2x)(y-3x)\bigr)
$$
is lc but not klt. Indeed, the analogous contracted configuration consists of a central $(-5)$-curve and four $(-2)$-curves, and the corresponding coefficients are
$$
e_1=1
\qquad\mbox{and}\qquad
e_2=\cdots=e_4=\frac12.
$$
\end{exa}

%%%%%%%%%%%%%%%%%%%%%%%%%%%%%%
%%%%%%%%%%%%%%%%%%%%%%%%%%%%%%
\section{\bf The generalised lc threshold--degree inequality on klt surfaces}

The invariants $\delta_x(f)$, $\lambda_x(f)$, and $\theta_x(f)$ introduced above measure different aspects of the singularity of a rational map. The examples in Section \ref{s-log-disc-normalised-graphs} show that $\theta_x(f)$ can vary substantially even when $\delta_x(f)$ and $\lambda_x(f)$ remain fixed. In this section we establish a direct relation between the normalised graph fibre degree $\delta_x(f)$ and the generalised lc threshold $\lambda_x(f)$.

\subsection{The inequality}

\begin{thm}[Generalised lc threshold--degree inequality]
\label{t-lct-degree-inequality-for-rational-maps-on-surfaces}
Let $x\in X$ be a closed point on a klt surface, and let
$$
f\colon X\dashrightarrow \mathbb P^n
$$
be a rational map which is singular at $x$. Then
$$
\delta_x(f)\lambda_x(f)\leq 2.
$$
\end{thm}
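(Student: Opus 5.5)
The plan is to compare the threshold with a weighted average of log discrepancies over the exceptional curves of a suitable model, where the weights are the degrees of the polarisation. Two identities will drive this. The first is the multiplicity formula of Theorem~\ref{t-norm-graph-fibre-degree-mult-hyperplane-pullback}, which says $\delta_x(f)=\mu_xV$ for $V=f^*H$ with $H$ general. The second is an adjunction-type bound on a dlt-type model of the exceptional locus.

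\textbf{Step 1: set up the numerical data.} Work locally at $x$. Take a resolution $\phi\colon W\to X$ that factors through $Y^\nu$ via $h\colon W\to Y^\nu$; passing to a higher model does not change $\lambda_x(f)$. Let $E_1,\dots,E_r$ be the exceptional curves over $x$, and record:
\begin{itemize}
\item $a_i=a(E_i,X,0)>0$, positive because $X$ is klt;
\item $e_i$, the coefficient of $E_i$ in $h^*E_f$, where $\phi^*f^*H=h^*q^*H+h^*E_f$;
\item $d_i=\deg (q\circ h)^*\mathcal O(1)|_{E_i}=-h^*E_f\cdot E_i$.
\end{itemize}
The identity for $d_i$ holds because $\phi^*f^*H\cdot E_i=0$. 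By definition of $\lambda_x(f)$ via $B_t$ we get
$$\lambda_x(f)\le a_i/e_i\quad\text{for every } i \text{ with } e_i>0.$$
Hence, for any nonnegative weights $w_i$,
$$\lambda_x(f)\sum_i w_ie_i\le\sum_i w_ia_i.$$
I will take $w_i=d_i\ge 0$, since $(q\circ h)^*\mathcal O(1)$ is nef. This gives
$$\lambda_x(f)\,\bigl(-(h^*E_f)^2\bigr)\le -h^*E_f\cdot \textstyle\sum_i a_iE_i.$$

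\textbf{Step 2: the left-hand side is at least $\delta_x(f)$.} Write $D$ for the strict transform of $V$ on $W$, which is $(q\circ h)^*H$ near the exceptional locus. Then $-(h^*E_f)^2=D\cdot h^*E_f=\sum_ie_i(D\cdot E_i)$. On the other hand, as in the proof of Theorem~\ref{t-norm-graph-fibre-degree-mult-hyperplane-pullback}, $\delta_x(f)=\mu_xV=\operatorname{length}(D\cap \phi^{-1}(x))=\sum_i \operatorname{ord}_{E_i}(m_x)\,(D\cdot E_i)$. So it suffices to show $e_i\ge\operatorname{ord}_{E_i}(m_x)$ whenever $D\cdot E_i>0$. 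Such an $E_i$ is not contracted by $h$, so $e_i=\operatorname{ord}_{E_i}\phi^*V$, because $q^*H$ has no component along $E_i$ for general $H$. Since $V$ passes through $x$, $V$ is locally cut out by elements of $m_x$ after clearing to a Cartier multiple; on the index-one cover, or by writing $rV=\operatorname{div}(g)$ with $g\in m_x$, one gets $e_i\ge \operatorname{ord}_{E_i}(m_x)/r$. The factor $r$ is the delicate point. I would first try to avoid it by using $-(h^*E_f)^2=D\cdot\phi^*V$ directly and comparing with $\mu_xV$ as a local intersection number on $W$.

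\textbf{Step 3: the right-hand side is at most $2$, the main obstacle.} Write $K_W+\sum_i(1-a_i)E_i=\phi^*K_X$. Then
$$\sum_ia_iE_i\equiv_X K_W+\textstyle\sum_iE_i,$$
so
$$-h^*E_f\cdot\sum a_iE_i=\sum_i d_i\,(K_W+E_{\rm red})\cdot E_i=\sum_i d_i\bigl(-2+\#\{\text{neighbours of }E_i\}\bigr),$$
using that the exceptional graph of a klt surface singularity is a tree of smooth rational curves. This is not obviously bounded by $2$: it equals $-2\sum d_i+\sum_{\text{edges}}(d_i+d_j)$. The cleanest route I would try is to choose $W$ minimal for the purpose. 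Contract every $h$-exceptional curve by working on $Y^\nu$ itself with numerical log discrepancies, as in the Remark of Section~\ref{s-log-disc-normalised-graphs}. Then only the curves with $d_i>0$ remain, and each contributes $d_i(K_{Y^\nu}+E_{\rm red})\cdot E_i=d_i(-2+\deg\operatorname{Diff})$ with $\deg\operatorname{Diff}<\#\{\text{nodes}\}+\#\{\text{branch points}\}$. The tree structure of the dual graph, with at most one chain carrying $q$-nonconstant curves, together with $\deg \operatorname{Diff}\le 2$ on an end curve and klt-ness, should give total $\le 2$. I expect this step to be the hardest, and I would first test it on the toric examples. In Example~\ref{exa-toric-map-degree-one-X-Y-deep-sing}, equality $\delta\lambda=2$ fails but the bound holds. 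In the cone of degree $n$, $\delta=1$ and $\lambda=2$ give equality, which suggests the extremal configuration is a single curve with $K_{Y^\nu}+E\equiv 0$ on a smooth model.
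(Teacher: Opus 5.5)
Your approach has a genuine gap, and it already sits in the choice of weights in Step~1. Since $d_i=0$ for every curve contracted by $h$, the inequality you obtain involves only the ratios $a_i/e_i$ of curves that survive on $Y^\nu$. But $\lambda_x(f)$ is often computed by a contracted curve. Take $f=(x^7:g_5)$ with $g_5=y(y-x)(y-2x)(y-3x)(y-4x)$ on $\mathbb A^2$ (the threshold example in the section on thresholds, and Example~\ref{exa-map-degree-5-A2-non-lc-Y}). The only curves not contracted to $Y^\nu$ are the five terminal $(-1)$-curves, each with $a_i=4$, $e_i=7$, $d_i=1$. So Step~1 reads $35\,\lambda_o(f)\le 20$, i.e.\ $\lambda_o(f)\le 4/7$. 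Since $\delta_o(f)=5$ and $5\cdot\frac47=\frac{20}{7}>2$, nothing built on this inequality can reach the theorem. In fact this example is extremal: $\lambda_o(f)=2/5$ is computed by the central $(-6)$-curve and $\delta_o(f)\lambda_o(f)=2$.

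Independently, the two targets of Steps~2 and~3 are each false, and they cannot both hold, because $-(h^*E_f)^2$ is quadratic in $E_f$ while $-h^*E_f\cdot\sum_i a_iE_i$ is linear. Take the cone over the rational curve of degree $n$ from the toric examples. There $W=Y^\nu$ is the minimal resolution with a single curve $E$, $E^2=-n$, $E_f=\frac1n E$ and $\operatorname{ord}_E(m_x)=1$. So $e_E<\operatorname{ord}_E(m_x)$ and $-(h^*E_f)^2=\frac1n<1=\delta_x(f)$; the factor $r$ in Step~2 is a real obstruction, not a technicality. For $(x^m:y^m)$ on $\mathbb A^2$ one has $h^*E_f=mE$ on the blowup, and the right-hand side in Step~3 equals $2m$. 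Your identity in Step~3 also mixes coefficients: expanding $h^*E_f=\sum_i e_iE_i$ gives $-h^*E_f\cdot(K_W+E_{\mathrm{red}})=\sum_i e_i\bigl(2-\#\{\text{neighbours of }E_i\}\bigr)$, not $\sum_i d_i(-2+\cdots)$.

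The missing idea is to intersect with the maximal ideal cycle rather than weight by the $d_i$. Choose $W$ so that in addition $m_x\mathcal O_W=\mathcal O_W(-N)$. Set $t=\lambda_x(f)$ and $\Delta_t=\sum_i(1-a_i+te_i)E_i$. Then $K_W+\Delta_t+t\,(q\circ h)^*H\equiv_X 0$, and by your own Step~1 inequality $te_i\le a_i$, every coefficient of $\Delta_t$ is at most $1$.

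\begin{itemize}
\item Intersecting with $N$ gives $t\,(q\circ h)^*H\cdot N=-(K_W+\Delta_t)\cdot N$. The left side is exactly $t\,\delta_x(f)$ by the projection formula, since $N$ pushes forward to $[Y^\nu_x]$. So no comparison between $e_i$ and $\operatorname{ord}_{E_i}(m_x)$ is ever needed.
\item Klt surface singularities are rational, so Lipman's vanishing $R^1\phi_*\mathcal O_W(-N)=0$, together with $\phi_*\mathcal O_W(-N)=m_x$, gives $\chi(\mathcal O_N)=1$. Hence $p_a(N)=0$ and $(K_W+N)\cdot N=-2$.
\item Every coefficient of $N$ is at least $1$, so $N-\Delta_t$ is effective and exceptional. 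Anti-nefness of $N$ then gives $(\Delta_t-N)\cdot N\ge 0$.
\end{itemize}

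Hence $(K_W+\Delta_t)\cdot N\ge -2$, so $t\,\delta_x(f)\le 2$. In your language, this uses the weights $-N\cdot E_i$, which do see contracted curves; in the $(x^7:g_5)$ example they are concentrated entirely on the central curve. The constant $2$ comes from adjunction on $N$, not from the shape of the dual graph.
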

\begin{proof}
Take a resolution
$$
\phi\colon V\longrightarrow X
$$
such that the induced map
$$
h\colon V\bir\mathbb P^n
$$
is a morphism and
$$
m_x\mathcal O_V=\mathcal O_V(-N)
$$
for an effective exceptional divisor $N$. Let $H\subset\mathbb P^n$ be a sufficiently general hyperplane. Write
$$
\phi^*f^*H=h^*H+E_f
$$
and define $B_V$ by
$$
K_V+B_V=\phi^*K_X.
$$

Ffor eacse of notation let $0t=\lambda_x(f)$. Then the generalised pair $(X,t f^*H)$ is generalised lc at $x$, so every coefficient of
$$
\Delta_t:=B_V+tE_f
$$
along a divisor over $x$ is at most one. Moreover,
$$
K_V+\Delta_t+th^*H=\phi^*(K_X+tf^*H).
$$
Intersecting with $N$ gives
$$
(K_V+\Delta_t)\cdot N+th^*H\cdot N=0.
$$

We claim that
$$
(K_V+\Delta_t)\cdot N\geq-2.
$$
Indeed, since $m_x\mathcal O_V=\mathcal O_V(-N)$, the divisor $-N$ is nef over $X$. Moreover, as $X$ is rational, \cite[Theorem 12.1(ii)]{Lipman-rational-singularities} gives 
$$
R^1\phi_*\mathcal O_V(-N)=0.
$$
Using
$$
\phi_*\mathcal O_V(-N)= m_x
$$
and using direct image of the exact sequence
$$
0\longrightarrow\mathcal O_V(-N)
\longrightarrow\mathcal O_V
\longrightarrow\mathcal O_N
\longrightarrow0,
$$
we obtain
$$
H^0(N,\mathcal O_N)=k,
\qquad
H^1(N,\mathcal O_N)=0.
$$
Thus $\mathcal{X}(N,\mathcal{O}_N)=1$, so 
$$
p_a(N)=1-\mathcal{X}(N,\mathcal{O}_N)=0,
$$ 
and hence by adjunction
$$
(K_V+N)\cdot N=2p_a(N)-2=-2.
$$

Every exceptional prime divisor over $x$ appears in $N$ with coefficient at least one. Since the coefficients of $\Delta_t$ are at most one,
$$
N-\Delta_t\geq0
$$
along the support of $N$. As $N$ is anti-nef,
$$
(N-\Delta_t)\cdot N\leq0.
$$
Therefore
$$
(K_V+\Delta_t)\cdot N=(K_V+N)\cdot N+(\Delta_t-N)\cdot N\geq-2.
$$

It follows that
$$
th^*H\cdot N\leq2.
$$
Since $N$ is the scheme-theoretic fibre of $V\to X$ over $x$, and $V\to X\times\mathbb P^n$ factors through the normalised graph, the projection formula gives
$$
h^*H\cdot N=q^*H\cdot [Y^\nu_x]=\delta_x(f).
$$
Hence
$$
t\delta_x(f)\leq2
$$
which means
$$
\delta_x(f)\lambda_x(f)\leq2.
$$
\end{proof}

%%%%%%%%%%%%%%%%%%%%%%%%%%%%%%
\subsection{Bounded fibre degree but arbitrarily small threshold}

The constant $2$ in Theorem \ref{t-lct-degree-inequality-for-rational-maps-on-surfaces} is sharp. Indeed, 
$$
f=(x^m:y^m)\colon\mathbb A^2\dashrightarrow\mathbb P^1
$$
is singular at the origin $o$, and 
$$
\delta_o(f)=m
\qquad\text{and}\qquad
\lambda_o(f)=\frac{2}{m}
$$
where the first equality follows from Theorem \ref{t-norm-graph-fibre-degree-mult-hyperplane-pullback} and the second equality follows from Proposition \ref{p-rational-map-lc-threshold-toric-case} as $f$ is induced by $\alpha\colon \Z^2\to \Z$ sending $(a,b)$ to $ma-mb$.
Thus
$$
\delta_o(f)\lambda_o(f)=2.
$$

\begin{exa}\label{exa-bnd-fib-degree-small-threshold}
Theorem \ref{t-lct-degree-inequality-for-rational-maps-on-surfaces} is useful in the sense that a positive lower bound for the generalised lc threshold gives an upper bound on the normalised graph fibre degree. But the converse is not true, that is, the normalised graph fibre degree can be bounded from above while the generalised lc threshold can be arbitrarily small.

Indeed, let
$$
X=\{xy=z^{2r}\}\subset\mathbb A^3
$$
be the $A_{2r-1}$ singularity at the origin $o$. Let
$$
\phi\colon W\longrightarrow X
$$
be its minimal resolution. The exceptional locus is a chain
$$
E_1-\cdots-E_{2r-1},
\qquad
E_i^2=-2.
$$
Set
$$
F:=\sum_{i=1}^{2r-1}\min\{i,2r-i\}E_i.
$$
Then
$$
F\cdot E_i=0
\quad\text{for }i\neq r,
$$
whereas
$$
F\cdot E_r=-2.
$$
Then $-F$ is nef over $X$, actually, its linear system is base point free over $X$ as $W\to X$ is toric.

Choose a finite-dimensional subspace of $H^0(W,-F)$ generating $\mathcal{O}_W(-F)$ over $X$, and let
$$
h\colon W\longrightarrow\mathbb P^n
$$
be the induced morphism. Together with $\phi$, it determines a rational map
$$
f\colon X\dashrightarrow\mathbb P^n.
$$
Since
$
-F\cdot E_r=2,
$
the map $f$ is singular at the origin $o$.

Let $H\subset\mathbb P^n$ be a general hyperplane. Then
$$
h^*H\sim-F/X,
\qquad
\phi^*f^*H=h^*H+F.
$$
Moreover, the fundamental cycle of $W/X$ is 
$$
N=E_1+\cdots+E_{2r-1}
$$
and $\mathcal{O}_W(-N)$ is generated relatively over $X$. This combined with $\phi_*\mathcal{O}_W(-N)=m_x$ shows that  $m_x\mathcal{O}_W=\mathcal{O}_W(-N)$. 
Then
$$
\delta_o(f)=h^*H\cdot N=-F\cdot N=2.
$$

We now calculate the generalised lc threshold. Since $X$ is canonical,
$$
K_W=\phi^*K_X.
$$
Moreover, $h^*H$ is the trace of the nef part on $W$, and for every $t\geq0$ we have
$$
K_W+tF+th^*H=\phi^*(K_X+tf^*H).
$$
Thus the generalised boundary on $W$ is $tF$. Since $\operatorname{Supp}F$ has simple normal crossings, the generalised pair $(X,t f^*H)$ is generalised lc at $o$ precisely when every coefficient of $tF$ is at most one. As the largest coefficient of $F$ is $r$, we obtain
$$
\lambda_o(f)=\frac1r.
$$

Hence
$$
\delta_o(f)=2,
\qquad
\lambda_o(f)=\frac1r.
$$
In particular, the normalised graph fibre degree remains fixed while the generalised lc threshold tends to zero.
\end{exa}

%%%%%%%%%%%%%%%%%%%%%%%%%%%%%%
%%%%%%%%%%%%%%%%%%%%%%%%%%%%%%
\section{\bf Maps on smooth surfaces}

In this section we study singularities of rational maps on smooth surfaces in greater depth.
Assume we are given a rational map 
$$
f\colon X\bir \PP^n
$$
and a closed point $x\in X$. Assume $X$ is $\epsilon$-lc at $x$ for $\epsilon\geq 0$ and that $f^*H$ is $\Q$-Cartier at $x$. Define the \emph{generalised $\epsilon$-lc threshold of $f$ at $x$} by
$$
\lambda_x^\epsilon(f):=\sup\{t\geq 0\mid (X,tf^*H)\text{ is generalised $\epsilon$-lc at }x\},
$$
where $H\subset \PP^n$ is a hyperplane and $(X,tf^*H)$ is viewed as a generalised pair as above with nef part the pullback of $H$ to the normalised graph. Thus
$$
\lambda_x^0(f)=\lambda_x(f).
$$

%%%%%%%%%%%%%%%%%%%%%%%%%%%%%%
\subsection{Multiplicity, fibre degree and the generalised $1$-lc threshold}

\begin{lem}[Multiplicity, fibre degree and the generalised $1$-lc threshold]\label{l-rational-maps-mult=fib-degree=inverse-can-threshold}
Let $f\colon X\bir \PP^n$ be a rational map from a smooth surface, let $x\in X$ be a closed point, and assume $f$ is singular at $x$. Let $H\subset \PP^n$ be a general hyperplane. Then
$$
\mu_xf^*H=\delta_x(f)=\frac{1}{\lambda_x^1(f)}.
$$
\end{lem}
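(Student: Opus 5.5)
The first equality $\mu_xf^*H=\delta_x(f)$ is the special case of Theorem \ref{t-norm-graph-fibre-degree-mult-hyperplane-pullback} for a smooth surface, so the work is in showing $\lambda_x^1(f)=1/m$, where $m:=\mu_xf^*H$. Near $x$ I will use the primitive local base ideal $I\subset\mathcal O_{X,x}$. For general $H$ we have $f^*H=\Div(s)$ with $s$ a general element of $I$, and $m=\operatorname{ord}_x(I)\ge1$ because $f$ is singular at $x$. Take a log resolution $\phi\colon V\to X$ built from successive point blowups that principalises $I$, say $I\mathcal O_V=\mathcal O_V(-F)$, and write
$$
\phi^*f^*H=h^*H+F,
\qquad
K_V=\phi^*K_X+\sum_E(a(E)-1)E.
$$
The generalised boundary of $(X,tf^*H)$ on $V$ is then $\sum_E(1-a(E)+t\operatorname{ord}_E I)E$. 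So $(X,tf^*H)$ is generalised $1$-lc at $x$ exactly when $t\operatorname{ord}_E(I)\le a(E,X,0)-1$ for every exceptional $E$ over $x$. For divisors over $x$ that are not on $V$, I plan to handle them by blowing up points of $V$. Since the nef part descends to $V$, the usual computation on a smooth surface with snc boundary shows these divisors give conditions that are no stronger than the ones already seen, as long as all boundary coefficients are $\le0$. In other words, we are in the canonical range and the pair is terminal-like along the strata. I therefore expect
$$
\lambda_x^1(f)=\min_E\frac{a(E,X,0)-1}{\operatorname{ord}_E(I)},
$$
with the minimum taken over the exceptional divisors of $V$.

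The upper bound comes from the first blowup $E_1$ of $x$. There $a(E_1)=2$ and $\operatorname{ord}_{E_1}(I)=m$, which gives $\lambda_x^1(f)\le1/m$.

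The lower bound is the main step. We need $\operatorname{ord}_E(I)\le m\,(a(E,X,0)-1)$ for every exceptional $E$ over $x$. I plan to prove this by induction along the sequence of point blowups that produces $E$. Let $E$ arise by blowing up a point $y$ lying on one or two earlier exceptional divisors $E',E''$. Then $a(E)=a(E')+a(E'')$ (or $a(E')+1$ if $y$ lies on only one of them), and
$$
\operatorname{ord}_E(I)=\operatorname{ord}_{E'}(I)+\operatorname{ord}_{E''}(I)+\operatorname{ord}_y(\text{residual ideal}).
$$
The crucial bound is that the residual multiplicity at an infinitely near point is at most $m$: the strict transform of a general member of $I$ has multiplicity at $y$ no greater than $m$, since multiplicities of strict transforms do not increase under blowup. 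Putting these together, in the two-divisor case
$$
\operatorname{ord}_E(I)\le m(a(E')-1)+m(a(E'')-1)+m=m(a(E)-1),
$$
and the one-divisor case gives $\le m(a(E')-1)+m=m(a(E)-1)$. The base case $E_1$ holds with equality.

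I expect the main obstacle to be justifying the decomposition of $\operatorname{ord}_E(I)$ through the orders of the previous divisors together with the residual multiplicity bound $\le m$. I intend to handle this by writing $I\mathcal O=\mathcal O(-F_{\mathrm{prev}})\cdot I'$ at each stage and observing that the general member of $I'$ is the strict transform of $\Div(s)$, whose multiplicity at any infinitely near point is at most $\mu_x\Div(s)=m$. This gives $\lambda_x^1(f)=1/m$, and with Theorem \ref{t-norm-graph-fibre-degree-mult-hyperplane-pullback} the chain of equalities follows.
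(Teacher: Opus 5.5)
Your proof is correct, but it takes a different route from the paper.

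\textbf{The paper's route.} The paper takes the minimal resolution $W\to Y^\nu$ and checks that $K_W+h^*H$ is nef over $X$. It then runs the $K_W$-MMP over $X$ with scaling of $h^*H$. This factors $W\to X$ into point blowups $W=X_l\to\cdots\to X_0=X$ whose scaling thresholds are non-increasing. The paper identifies $\lambda_x^1(f)$ with the threshold $\tau$ of the last contraction $X_1\to X$. That value is read off from $(K_{X_1}+\tau f_1^*H)\cdot E_1=0$, which gives $\tau\,\mu_xf^*H=1$.

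\textbf{Your route.} You work valuatively with the primitive base ideal $I$, writing the generalised log discrepancy of an exceptional $E$ over $x$ as $a(E,X,0)-t\operatorname{ord}_E(I)$. The first blowup $E_1$ gives the upper bound $1/m$. For the lower bound, you induct over infinitely near points using three facts:
\begin{itemize}
\item $a(E)=a(E')+a(E'')$, respectively $a(E')+1$;
\item $\operatorname{ord}_E(I)$ splits as the orders of $I$ along $E',E''$ plus the order of the residual ideal at the centre;
\item the strict transform of a general $f^*H$ has multiplicity at most $m$ at every infinitely near point.
\end{itemize}
Together these give $\operatorname{ord}_E(I)\le m\,(a(E,X,0)-1)$ for every exceptional $E$ over $x$. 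This is essentially the classical fact that a curve of multiplicity $m$ at a smooth surface point has canonical threshold $1/m$, transferred to the ideal via $\operatorname{ord}_E(I)\le\operatorname{ord}_E(s)$ for general $s\in I$.

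\textbf{Comparison.} Your argument is more elementary and self-contained. It gives an explicit inequality for every divisor over $x$, without the minimal resolution of $Y^\nu$ or an MMP. Your separate discussion of divisors not on $V$ is superfluous. Once you cite the standard fact that every divisorial valuation centred at a smooth surface point is reached by finitely many point blowups, the induction already covers all of them.

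The paper's argument avoids the infinitely-near bookkeeping. It also produces the MMP-with-scaling factorisation with $f_j^*H\cdot E_j>0$ at every step. That factorisation is reused later, in the proof of Theorem \ref{t-bnd-singularity-negativity-normalised-graphs}, and your approach does not supply it directly.
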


\begin{proof} 
Let $Y^\nu$ be the normalised graph of $f$ and $Y^\nu_x$ be its scheme-theoretic fibre over $x$.
Taking the minimal resolution $W\to Y^\nu$ and denote the induced morphism $W\to \PP^1$ by $h$.
Then $K_W+h^*H$ is nef over $X$ because $K_W$ is nef over $Y^\nu$ and because  for any $-1$-curve $C$ we have $K_W\cdot C=-1$ and $h^*H\cdot C\ge 1$. Thus running the MMP on $K_W$ over $X$ with scaling of $h^*H$
decomposes $W\to X$ into a sequence of smooth blowups 
$$
W=X_l\to \cdots \to X_0=X.
$$ 
Denote the induced map $X_i\bir \PP^n$ by $f_i$. 

Let $E_i$ be the exceptional divisor of $\phi_i\colon X_i\to X_{i-1}$ which blows up $x_{i-1}$. If $\tau$ is the number in the MMP appearing in the last contraction $X_1\to X_0=X$, then 
$$
K_{X_1}+\tau f_1^*H=\phi_1^*(K_X+\tau f^*H).
$$ 
Moreover, since $(W,f_l^*H)$ has generalised canonical singularities with nef part $f_l^*H$, $(X,\tau f^*H)$ has generalised canonical singularities but $(X,tf^*H)$ is not generalised canonical for any $t>\tau$. In other words, 
$$
\lambda_x^1(f)=\tau.
$$ 
Moreover, $\tau f_1^*H\cdot E_1=1$, and since $E_1$ is not a component of $f_1^*H$, we get   
$$
\mu_xf^*H=f_1^*H\cdot E_1=\frac{1}{\lambda_x^1(f)}.
$$ 

The equality $\mu_xf^*H=\delta_x(f)$ is immediate by Theorem \ref{t-norm-graph-fibre-degree-mult-hyperplane-pullback}.
\end{proof}

%%%%%%%%%%%%%%%%%%%
\subsection{Bounded singularity negativity of normalised graphs}

Let $V$ be a normal surface. We say that $V$ is $l$-bounded if, on the minimal resolution
$$
\pi\colon W\to V,
$$
every exceptional curve $E$ satisfies
$$
E^2\geq-l,
$$
and there are at most $l$ exceptional curves $E$ with
$$
E^2\leq-3.
$$

Thus an $l$-bounded surface may have arbitrarily many $(-2)$-curves. Moreover, $V$ may not be lc even if $K_V$ is $\Q$-Cartier.

\begin{thm}\label{t-bnd-singularity-negativity-normalised-graphs}
Let $f\colon X\bir\PP^n$ be a rational map from a smooth surface, let $x\in X$ be a closed point, and assume that $f$ is singular only at $x$. Let
$$
p\colon Y^\nu\to X
$$
be the normalised graph. If
$$
\delta_x(f)\leq d,
$$
then $Y^\nu$ is $(3d+1)$-bounded.
\end{thm}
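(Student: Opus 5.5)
We reuse the factorisation from the proof of Lemma \ref{l-rational-maps-mult=fib-degree=inverse-can-threshold}. Let $W\to Y^\nu$ be the minimal resolution and $h\colon W\to\PP^n$ the induced morphism. Running the $K_W$-MMP over $X$ with scaling of $h^*H$ writes $W\to X$ as a sequence of point blowups
$$
W=X_l\to\cdots\to X_0=X .
$$
Each centre $x_{i-1}$ is infinitely near to $x$ and is a base point of $f_{i-1}$. The exceptional curves of $W\to Y^\nu$ are among the strict transforms $E_i'$ of the $E_i$. It therefore suffices to bound two quantities on $W$: the self-intersections $E_i'^2$, and the number of $i$ with $E_i'^2\le -3$.

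Let $m_i$ be the multiplicity of $f_{i-1}^*H$ at $x_{i-1}$, for a general hyperplane $H$. Then $m_1=\mu_xf^*H=\delta_x(f)\le d$ by Lemma \ref{l-rational-maps-mult=fib-degree=inverse-can-threshold}. Moreover $m_i\ge1$ for all $i$, since every $x_{i-1}$ is a base point, so a general member passes through it. Since $H$ is general, $f_i^*H$ is the strict transform of $f_{i-1}^*H$. We then use the classical proximity inequalities
$$
m_i\ \ge \sum_{j\ \text{proximate to}\ i} m_j .
$$
We also use the standard formula: on $W$, $E_i'^2=-1-\#\{j : x_{j-1}\text{ proximate to } x_{i-1}\}$.

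\textbf{Self-intersection bound.} Every proximate point has $m_j\ge1$, so the number of points proximate to $x_{i-1}$ is at most $m_i$. Multiplicities are non-increasing along infinitely near chains by the same inequality, so $m_i\le m_1\le d$. This gives $E_i'^2\ge -1-d\ge-(3d+1)$.

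\textbf{Counting curves with $E_i'^2\le-3$.} These are exactly the $i$ with at least two proximate points. By Enriques' description, such an $x_{i-1}$ has a first proximate point $x_{j_1-1}$ in its first neighbourhood. It also has a satellite point $x_{j_2-1}$, lying on $E_i'\cap E_{j_1}'$, which is proximate to both $x_{i-1}$ and $x_{j_1-1}$. The proximity inequality then gives
$$
m_i\ge m_{j_1}+m_{j_2}>m_{j_1},
$$
so the multiplicity drops strictly when passing from $x_{i-1}$ to its successor $x_{j_1-1}$.

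We organise the infinitely near points as a tree rooted at $x$. Applying the proximity inequality to first-neighbourhood points (all of which are proximate to their parent), the sum of the $m$'s of the children of a node is at most the node's $m$. Hence the tree has at most $m_1\le d$ leaves, so at most $d$ maximal chains. Along each chain the multiplicity is a non-increasing sequence of positive integers starting from at most $d$, so it drops strictly at most $d-1$ times. Charging each curve with $E_i'^2\le-3$ to a strict drop on some chain gives a bound of order $d\cdot(d-1)$, which is not linear in $d$. To obtain the linear bound $3d+1$, I would refine the count using the telescoping quantity
$$
\sum_i\Bigl(m_i-\sum_{\text{children } j}m_j\Bigr)\le m_1\le d ,
$$
together with the excess $m_{j_2}\ge1$ contributed at each such $i$. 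This should give at most roughly $d$ such curves from drops plus at most $2d+1$ from branchings and leaves, hence the constant $3d+1$.

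The main obstacle is this combinatorial bookkeeping on the proximity tree: showing that each curve of self-intersection $\le-3$ consumes a definite amount of the total multiplicity budget $d$, and that satellite chains do not spend that budget repeatedly. I would first attempt it via Enriques' unloading and the proximity matrix, checking it against the examples of Section \ref{s-log-disc-normalised-graphs}, for instance the central $(-6)$-curve when $d=5$.
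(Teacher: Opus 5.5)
Your set-up matches the paper's: minimal resolution of $Y^\nu$, MMP with scaling, point blowups, $S_j=f_j^*H$. Your self-intersection bound is correct and sharper than needed. With $P_i$ the number of centres proximate to $x_{i-1}$, the proximity inequality gives $-E_i'^2=1+P_i\le 1+m_i\le d+1$.

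The gap is in the second half, which is the real content of the theorem. You never prove the linear bound on the number of curves with $E_i'^2\le-3$: the charging argument gives only $d(d-1)$, and the rest is conjectural. The two ingredients you propose for the refinement also do not work.
\begin{itemize}
\item The claim that a centre with at least two proximate points must have a satellite point proximate to it is false. In Example~\ref{exa-map-degree-5-A2-non-lc-Y} the origin has five free points in its first neighbourhood and no satellite point, yet it produces the $(-6)$-curve.
\item The quantity $\sum_i\bigl(m_i-\sum_{\text{children }j}m_j\bigr)$ is identically $m_1$, since every non-root centre is the child of exactly one centre. So it carries no information about satellite points, which are what create the extra negativity.
\end{itemize}

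The missing idea is a bound on the number $r$ of satellite centres (the paper's double blowups). The paper gets it from $Z_j=\sum_{i\le j}E_i$. The number $S_j\cdot Z_j$ is unchanged by a free blowup and drops by $S_j\cdot E_j\ge1$ at a satellite one. It starts at $S_1\cdot E_1=\delta_x(f)$ and ends at $S_l\cdot Z_l\ge0$, so $r\le d$. The paper then bounds the free blowups centred on $E_j$ by $\#(S_j\cap E_j)$, uses $\sum_j(\#(S_j\cap E_j)-1)\le d-1$ and $\sum_j b_j=2r$, and gets $\sum_j\max\{-E_j^2-2,0\}\le3d-1$.

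In your language, the bound on $r$ is a telescoping over proximate points rather than children: $0\le\sum_iS_l\cdot E_i'=\sum_i\bigl(m_i-\sum_{j\text{ prox. to }i}m_j\bigr)=m_1-\sum_{j\text{ satellite}}m_j$. Once you have $r\le d$, your framework finishes quickly.
\begin{itemize}
\item Each non-root free centre is proximate to exactly one centre and each satellite to exactly two, so $\sum_iP_i=l-1+r$.
\item Also $\max\{P_i-1,0\}=P_i-1$ unless $x_{i-1}$ is a leaf, where it is $0$.
\item Hence $\sum_i\max\{-E_i'^2-2,0\}=r-1+\#\{\text{leaves}\}\le 2d-1$, using your bound of $d$ on the number of leaves.
\end{itemize}
Without the bound on $r$, however, the count of curves with self-intersection at most $-3$ is not established.
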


\begin{proof}
As in the proof of Lemma \ref{l-rational-maps-mult=fib-degree=inverse-can-threshold}, taking the minimal resolution $W\to Y^\nu$ and running the MMP with scaling 
decomposes $W\to X$ into a sequence of smooth blowups 
$$
W=X_l\to \cdots \to X_0=X.
$$ 
Denote the induced map $X_j\bir \PP^n$ by $f_j$. 

 We will abuse notation and denote by $E_i$ the birational transform of the exceptional curve of $X_i\to X_{i-1}$ on each $X_j$, $i\le j$. On $X_j$ let
$$
S_j=f_j^*H,\qquad Z_j=\sum_{i=1}^jE_i,
$$
where $H\subset\PP^n$ is general.

Pick $j\ge 2$. If $X_j\to X_{j-1}$ is a single blowup, that is, if it blows up a point on only one irreducible component of the exceptional locus of $X_{j-1}\to X$,  then on $X_j$ we have 
$$
S_j\cdot Z_j=S_{j-1}\cdot Z_{j-1}.
$$
But if $X_j\to X_{j-1}$ is a double blowup, that is, if it blows up a point on two irreducible components of the exceptional locus of $X_{j-1}\to X$, then  on $X_j$ we have
$$
S_j\cdot Z_j=S_{j-1}\cdot Z_{j-1}-S_j\cdot E_j.
$$
Moreover, on $X_1$ we have
$$
S_1\cdot E_1=\mu_xf^*H=\delta_x(f)\leq d.
$$
Additionally, $S_j\cdot E_j\geq1$ on $X_j$ because by the proof of Lemma \ref{l-rational-maps-mult=fib-degree=inverse-can-threshold}, $W\to X$ is an MMP with scaling of $f_l^*H$, so $S_j\cdot E_j>0$. Thus the number $r$ of double blowups in the sequence is at most $d$ as $S_l\cdot Z_l\ge 0$.

On $X_j$, let
$$
h(j):=\#(S_j\cap Z_j),\qquad s(j):=\#(S_j\cap E_j)
$$
where $\#$ denotes the number of points, set-theoretically.
Then  for $j\ge 1$, we have 
$$
h(j)=h(j-1)+s(j)-1,
$$
where by convention we put $h(0)=1$. Therefore,
$$
\sum_{j=1}^l(s(j)-1)=h(l)-1\leq d-1
$$
because 
$$
h(l)\le S_l\cdot Z_l\le S_1\cdot E_1\le d.
$$

For each $j$, let $a_j$ and $b_j$ be the numbers of single and double blowups, respectively, in the sequence $W\to X_j$ whose centres lie on (the birational transform of) $E_j$. Then on $W$, 
$$
-E_j^2-1=a_j+b_j.
$$
Every later single blowup centred on the transform of $E_j$ occurs at a point of $S_j\cap E_j$, and two such single centres give distinct points on $E_j$. If one continues blowing up above the same point of $E_j$, the next centre on $E_j$ is a double centre, because it lies at the intersection with the newly created exceptional curve. Hence the single centres are bounded by the number of points in $S_j\cap E_j$. Thus $a_j\leq s(j)$.

Therefore, on $W$ we have 
$$
\max\{-E_j^2-2,0\}\leq s(j)-1+b_j.
$$
Since every double blowup contributes to two of the numbers $b_j$,
$$
\sum_{j=1}^lb_j=2r.
$$
Consequently,
$$
\sum_{j=1}^l\max\{-E_j^2-2,0\}\le \sum_{j=1}^l(s(j)-1+b_j)\leq d-1+2r\leq3d-1.
$$
Thus there are at most $3d-1$ curves with $E_j^2\leq-3$, and each such curve satisfies
$$
-E_j^2\leq3d+1.
$$
This shows that $Y^\nu$ is $(3d+1)$-bounded.
\end{proof}

%%%%%%%%%%%%%%%%%%%%%%%
\subsection{Degree spectrum and generalised $1$-lc thresholds in blowup sequence}

\begin{thm}\label{t-degree-spectrum-infinitely-near-thresholds}
Let $f\colon X\bir\PP^n$ be a rational map from a smooth surface and $x\in X$ a closed point. Assume $f$ has an isolated singularity at $x$. Consider a sequence of point blowups
$$
W=X_l\longrightarrow\cdots\longrightarrow X_1\longrightarrow X_0=X
$$
where $X_{i+1}\to X_i$ blows up $x_i$, and let $f_i\colon X_i\bir\PP^n$ be the induced map. Assume $x_0=x$, $x_i$ maps to $x_0$ and that $f_i$ is singular at $x_i$.

Then for each $i$, there is an effective divisor $A_i$ on the normalised graph $Y^\nu$, supported on the irreducible components of $Y^\nu_x$, such that
$$
\deg_{\mathcal{O}_{Y^\nu}(1)}A_i
=\delta_{x_i}(f_i)
=\frac{1}{\lambda_{x_i}^1(f_i)},
$$
In particular,
$$
\{\frac{1}{\lambda_{x_i}^1(f_i)}\}
\subseteq\operatorname{DSpec}_x(f).
$$
\end{thm}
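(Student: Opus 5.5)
The plan is to apply Lemma \ref{l-rational-maps-mult=fib-degree=inverse-can-threshold} to each $f_i$ at $x_i$, and then realise $\delta_{x_i}(f_i)$ as the degree of a cycle pulled back to a common model dominating $Y^\nu$. Since $X_i$ is a smooth surface and $f_i$ is singular at $x_i$, the lemma gives directly
$$
\delta_{x_i}(f_i)=\mu_{x_i}f_i^*H=\frac{1}{\lambda^1_{x_i}(f_i)}
$$
for a general hyperplane $H$. So it remains only to find an effective divisor $A_i$ on $Y^\nu$, supported on components of $Y^\nu_x$, with $\deg_{\mathcal O_{Y^\nu}(1)}A_i=\mu_{x_i}f_i^*H$.

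To build $A_i$, I will choose a smooth surface $V$ dominating both $X_{i+1}$ and the normalised graph $Y^\nu$, for instance a common resolution obtained by further point blowups of $W$ so that $f$ becomes a morphism $h\colon V\to\PP^n$. Let $\psi\colon V\to X_{i+1}$ and $r\colon V\to Y^\nu$ be the induced morphisms. Let $E_{i+1}\subset X_{i+1}$ be the exceptional curve over $x_i$. As in the proof of Lemma \ref{l-rational-maps-mult=fib-degree=inverse-can-threshold}, since $E_{i+1}$ is not a component of $f_{i+1}^*H$ for general $H$, we have
$$
\mu_{x_i}f_i^*H=f_{i+1}^*H\cdot E_{i+1}.
$$
Now $f_{i+1}^*H=\psi_*h^*H$ because $H$ is general. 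By the projection formula,
$$
f_{i+1}^*H\cdot E_{i+1}=h^*H\cdot\psi^*E_{i+1}.
$$
Here $\psi^*E_{i+1}$ is an effective divisor on $V$ supported over $x$. Define
$$
A_i:=r_*\psi^*E_{i+1}.
$$
Since $h=q\circ r$, we have $h^*H=r^*q^*H$, and the projection formula on $r$ then gives
$$
h^*H\cdot\psi^*E_{i+1}=q^*H\cdot A_i=\deg_{\mathcal O_{Y^\nu}(1)}A_i.
$$

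Next I check that $A_i$ has the required support. It is effective. It is supported on curves of $Y^\nu$ lying over $x$, and these are exactly the irreducible components of $Y^\nu_x$, because $Y^\nu_x$ is one-dimensional: $f$ is singular at $x$ and $Y^\nu\to X$ is birational with $X$ normal. Components of $\psi^*E_{i+1}$ contracted by $r$ vanish under $r_*$, and that is harmless.

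The main subtlety is the projection-formula step on the non-smooth surface $Y^\nu$. There $A_i$ is only a Weil divisor, but $q^*H$ is Cartier, so the intersection $q^*H\cdot A_i$ is well defined and equals $r^*q^*H\cdot\psi^*E_{i+1}$. I also need to confirm that the same general $H$ serves all finitely many $i$ simultaneously, which is fine since genericity is an open condition. The inclusion into $\operatorname{DSpec}_x(f)$ then follows from the definition, since $A_i$ is a pure one-dimensional effective cycle supported on components of $Y^\nu_x$.
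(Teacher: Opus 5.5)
Your proof is correct and is essentially the paper's argument. The paper takes $A_i:=\rho_{i*}[Y^\nu_{i,x_i}]$, where $\rho_i\colon Y^\nu_i\to Y^\nu$ is the induced morphism from the normalised graph of $f_i$. This is the same cycle as your $r_*\psi^*E_{i+1}$, because $\psi^*E_{i+1}$ is the divisor of $m_{x_i}\mathcal{O}_V$. The paper reads off $\deg A_i=\delta_{x_i}(f_i)$ directly from the projection formula, whereas you compute the degree as $\mu_{x_i}f_i^*H$ on $X_{i+1}$ and therefore also use the equality $\mu=\delta$ from Lemma \ref{l-rational-maps-mult=fib-degree=inverse-can-threshold}, which is harmless.
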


\begin{proof}
Let $Y_i^\nu$ be the normalised graph of $f_i$. The morphism $X_i\to X$ induces a morphism
$$
\rho_i\colon Y_i^\nu\to Y^\nu
$$
such that
$$
\mathcal{O}_{Y_i^\nu}(1)=\rho_i^*\mathcal{O}_{Y^\nu}(1).
$$
Set
$$
A_i:={\rho_i}_*[Y^\nu_{i,x_i}].
$$
Then $A_i$ is an effective divisor supported on $Y^\nu_x$, and the projection formula gives
$$
\deg_{\mathcal{O}_{Y^\nu}(1)}A_i
=\deg_{\mathcal{O}_{Y_i^\nu}(1)}[Y^\nu_{i,x_i}]
=\delta_{x_i}(f_i).
$$
The remaining equality follows from Lemma~\ref{l-rational-maps-mult=fib-degree=inverse-can-threshold}.
\end{proof}

%%%%%%%%%%%%%%%%%%%%%%%%%%%%%%
\subsection{Analogues of hypersurface singularities}

As observed in Example~\ref{exa-along-hypersurface-sing}, maps to $\PP^1$ play the role of hypersurfaces among rational maps. The next lemma characterises when the polarised graph of a map to $\PP^n$ is already determined by a pencil. For regular functions $g,h$ near a smooth point, write $\widetilde g,\widetilde h$ for the functions obtained by dividing by their greatest common divisor (which makes sense because $\mathcal{O}_{X,x}$ is a UFD for a smooth variety $X$).

\begin{lem}[Reduction to a pencil]\label{l-rational-maps-fibre-P1-deg-1}
Let 
$$
f\colon X\bir\PP^n, \qquad f=(f_0:\cdots:f_n)
$$ 
be a rational map from a smooth surface, and $x\in X$ a closed point,
where the $f_i$ are regular at $x$. Assume that $f$ is singular at $x$, and let
$$
\pi\colon Y\to X
$$
be its graph. After shrinking $X$ around $x$, the following are equivalent:
\begin{enumerate}
\item the scheme-theoretic fibre $Y_x\simeq\PP^1$ and
$$
\deg\mathcal{O}_Y(1)|_{Y_x}=1;
$$
\item for some $i,j$, the polarised graph $(Y,\mathcal{O}_Y(1))$ is the polarised graph of
$$
(f_i:f_j)\colon X\bir\PP^1,
$$
equivalently, the blowup of the ideal $(f_i,f_j)$ with its tautological line bundle;
\item for some $i,j$, the graph $Y$ is the hypersurface
$$
\widetilde f_j u-\widetilde f_i v=0
$$
in $X\times\PP^1$, and $\mathcal{O}_Y(1)$ is the pullback of $\mathcal{O}_{\PP^1}(1)$.
\end{enumerate}
\end{lem}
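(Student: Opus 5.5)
We prove the cycle (3)$\Rightarrow$(2)$\Rightarrow$(1)$\Rightarrow$(3), working locally at $x$ with $A=\mathcal O_{X,x}$ (a two-dimensional regular local ring) and the primitive presentation, so that the base ideal $I=(f_0,\dots,f_n)$ is $m_x$-primary after shrinking. Recall that $Y=B_IX\subset X\times\PP^n$.

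\emph{The implications (3)$\Rightarrow$(2)$\Rightarrow$(1).} For (3)$\Rightarrow$(2), the subsection on free resolutions and blowups shows that for coprime $\widetilde f_i,\widetilde f_j$ the blowup of $(\widetilde f_i,\widetilde f_j)$ is exactly the hypersurface $\widetilde f_j u-\widetilde f_i v=0$ with its tautological bundle. Moreover, $(f_i:f_j)=(\widetilde f_i:\widetilde f_j)$ as rational maps, so the two descriptions coincide. For (2)$\Rightarrow$(1), the fibre of the hypersurface over $x$ is cut out in $\{x\}\times\PP^1$ by $\widetilde f_j(x)u-\widetilde f_i(x)v$. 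This expression vanishes identically precisely because $f$ is singular at $x$, and then $Y_x=\{x\}\times\PP^1\simeq\PP^1$ with $\mathcal O_Y(1)|_{Y_x}=\mathcal O_{\PP^1}(1)$. Here the identification of polarised graphs in (2) means that the polarisation is pulled back along a linear map $\PP^1\to\PP^n$.

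\emph{The main implication (1)$\Rightarrow$(3).} Since $Y_x\simeq\PP^1$ embeds linearly by degree one, $g(Y_x)$ is a line $\ell\subset\PP^n$. Choose coordinates so that $\ell=\{t_2=\dots=t_n=0\}$, after a linear change of the $f_i$. This change is harmless: it replaces the generators by linear combinations, and we then pick $i,j$ among the new coordinates. (If one insists on the original indices, we can instead choose $i,j$ such that the projection $\PP^n\dashrightarrow\PP^1$, $(t_i:t_j)$, restricts to an isomorphism on $\ell$; this is possible for some coordinate pair since $\ell$ spans a line.) Let $\psi\colon\PP^n\dashrightarrow\PP^1$ be this projection and $Y'$ the graph of $(f_i:f_j)$. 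Since $\psi$ is defined along $\ell=g(Y_x)$, we get a morphism $Y\to X\times\PP^1$ over a neighbourhood of $x$ whose image is $Y'$. The induced birational morphism $\rho\colon Y\to Y'$ over $X$ satisfies $\mathcal O_Y(1)=\rho^*\mathcal O_{Y'}(1)$ near $Y_x$, and it is finite because it is injective on $Y_x$.

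We then show that $Y'$ is the hypersurface $H'=\{\widetilde f_j u-\widetilde f_i v=0\}$ and that $\rho$ is an isomorphism. By the free resolution subsection, $Y'$ equals $H'$. The divisor $H'$ is Cartier in the smooth threefold $X\times\PP^1$; its fibre over $x$ is contained in $\{x\}\times\PP^1$, and it equals all of $\{x\}\times\PP^1$ since $(f_i:f_j)$ is singular at $x$, a consequence of the fact that $\rho(Y_x)$ is a whole $\PP^1$. Consequently the scheme-theoretic fibre $Y'_x$ is the reduced $\PP^1$, and $\rho_x\colon Y_x\to Y'_x$ is an isomorphism $\PP^1\to\PP^1$, as it is a degree one map of reduced lines. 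Over points $y\neq x$ both $Y$ and $Y'$ are isomorphic to $X$, so $\rho$ is an isomorphism on those fibres. Lemma \ref{lem-finite-morphism-fibrewise-closed-immersion}, applied over $T=X$, then shows that $\rho$ is a closed immersion, and since $\rho$ is birational between integral schemes it is an isomorphism. Finally, $\mathcal O_Y(1)$ on $Y$ is pulled back from $\mathcal O_{\PP^1}(1)$ because $g$ factors through $Y'\to\PP^1$ followed by the inverse linear map, which gives (3) and also (2).

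\emph{Main obstacle.} The delicate point is making the factorisation $Y\to Y'$ rigorous. We must check that $\psi\circ g$ is a morphism near $Y_x$, which holds because $g(Y)$ near $Y_x$ avoids the centre of $\psi$. The nontrivial input is that $g(Y)$ lies near $\ell$ and misses the centre of projection, and this follows from properness of $Y\to X$ after shrinking $X$. We must also check that its composite with $\pi$ lands in the graph $Y'$; this holds because the two maps agree on the dense open set where $f$ is regular.
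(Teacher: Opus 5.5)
Your argument is correct and is essentially the paper's. You project by a coordinate pair $(t_i:t_j)$ that restricts to an isomorphism on the line $g(Y_x)$, obtain a finite birational morphism $Y\to Y'$ onto the graph of $(f_i:f_j)$, and conclude with Lemma~\ref{lem-finite-morphism-fibrewise-closed-immersion}, using the free-resolution description for (2)$\Leftrightarrow$(3). Your explicit check that $Y'_x=\{x\}\times\PP^1$ scheme-theoretically fills in a step the paper leaves implicit.

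One correction to your last step: $g$ need not factor through $\PP^1$ followed by a linear map. For $f=(x:y:x^2)$ on $\A^2$, condition (1) holds but $g$ is dominant onto $\PP^2$. The factorisation is also not needed, since $\mathcal O_Y(1)\simeq\rho^*\mathcal O_{Y'}(1)$ already follows from $\psi^*\mathcal O_{\PP^1}(1)\simeq\mathcal O_{\PP^n}(1)$ away from the centre of $\psi$. The same remark applies to your gloss on what the identification in (2) means.
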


\begin{proof}
Assume (1). The morphism
$$
Y_x\to\PP^n
$$
induced by the second projection is an isomorphism onto a line. Choose coordinates $t_i,t_j$ such that the projection
$$
\PP^n\bir\PP^1,\qquad (t_0:\cdots:t_n)\mapsto(t_i:t_j),
$$
is regular along this line and restricts to an isomorphism. The corresponding sections generate $\mathcal{O}_Y(1)$ near $Y_x$, hence, after shrinking $X$, define a morphism
$$
Y\to X\times\PP^1.
$$
Let $Z$ be its image. The induced morphism $Y\to Z$ is an isomorphism outside the fibre $Y_x$ and also an isomorphism when restricted to $Y_x$. It is therefore proper and quasi-finite, hence finite, and Lemma~\ref{lem-finite-morphism-fibrewise-closed-immersion} shows that it is an isomorphism. Since $Z$ is the graph of $(f_i:f_j)$, (2) follows.

The equivalence of (2) and (3) follows from the description of the blowup of a two-generated ideal on a smooth surface: after dividing $f_i,f_j$ by their greatest common divisor, its graph in $X\times\PP^1$ is defined by
$$
\widetilde f_j u-\widetilde f_i v=0.
$$

Finally, assume (3). Since $f$ is singular at $x$, both $\widetilde f_i$ and $\widetilde f_j$ vanish at $x$. Hence
$$
Y_x={x}\times\PP^1
$$
scheme-theoretically, and
$$
\mathcal{O}_Y(1)|_{Y_x}\simeq\mathcal{O}_{\PP^1}(1).
$$
Thus (1) holds.
\end{proof}

%%%%%%%%%%%%%%%%%%%%%%%%%%%%%%
\subsection{Linear type maps on smooth surfaces}

Linear type maps on surfaces are analogues of surface canonical singularities. The next theorem illustrates this analogy. 

\begin{thm}\label{t-linear-type-maps-smooth-surfaces}
Let 
$$
f\colon X\bir\PP^n, \qquad f=(f_0:\cdots:f_n)
$$ 
be a rational map from a smooth surface, and $x\in X$ a closed point,
where the $f_i$ are regular at $x$. Assume that $f$ is singular at $x$, and let
$$
\pi\colon Y\to X
$$
be its graph. After shrinking $X$ around $x$, the following are equivalent:
\begin{enumerate}
\item $f$ is linear type at $x$;
\item for some $i,j$, the polarised graph $(Y,\mathcal{O}_Y(1))$ is isomorphic to the hypersurface
$$
\widetilde f_j u-\widetilde f_i v=0
$$
in $X\times\PP^1$, where $\mathcal{O}_Y(1)$ is the pullback of $\mathcal{O}_{\PP^1}(1)$, and
$$
\operatorname{ord}_x(\widetilde f_i)=1
\quad\text{or}\quad
\operatorname{ord}_x(\widetilde f_j)=1;
$$
\item $Y$ has canonical singularities,
$$
Y_x\simeq\PP^1,
\qquad
\mathcal{O}_Y(1)|_{Y_x}\simeq\mathcal{O}_{\PP^1}(1);
$$
\item $Y$ has canonical singularities, $-K_Y$ is ample over $X$, and
$$
\deg\mathcal{O}_Y(1)|_{Y_x}=1;
$$
\item $\lambda_x^1(f)=1$;
\item $f^*H$ is smooth at $x$ for a general hyperplane $H\subset\PP^n$.
\end{enumerate}
Moreover, when these conditions hold, $Y$ has at most one singular point over $x$, and this is of type $A$.
\end{thm}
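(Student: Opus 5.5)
The plan is to route everything through condition (6) and the numerical identity of Lemma~\ref{l-rational-maps-mult=fib-degree=inverse-can-threshold}, namely $\mu_xf^*H=\delta_x(f)=1/\lambda_x^1(f)$. Condition (6) says $\mu_xf^*H=1$, so this lemma immediately gives $(5)\Leftrightarrow(6)\Leftrightarrow\delta_x(f)=1$. For $(1)\Rightarrow(6)$: linear type gives $Y^\nu_x\simeq\PP^1$ with $\mathcal O(1)$ of degree one, so $\delta_x(f)=1$, and Theorem~\ref{t-norm-graph-fibre-degree-mult-hyperplane-pullback} then gives $\mu_xf^*H=1$. The remaining work is the cycle $(6)\Rightarrow(2)\Rightarrow(3)\Rightarrow(4)\Rightarrow(1)$.

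For $(6)\Rightarrow(2)$ I would work with the primitive local presentation, so that $f^*H=\Div(\sum a_if_i)$. After a general linear change of coordinates on $\PP^n$, we may assume $g:=\widetilde f_0$ is a general member. By (6), $g$ has order one at $x$, so $g$ is part of a regular system of parameters $(g,t)$. Then $\mathcal O_{X,x}/(g)$ is a DVR, hence the image of $I=(f_0,\dots,f_n)$ in it is principal, generated by the image of an element of minimal order. A general member $h:=\widetilde f_1$ (again after a general coordinate change) achieves this minimal order. Since $g\in I$, it follows that $I=(g,h)$ locally at $x$.

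Thus $Y$ is the blowup of $(g,h)$. The remaining coordinates $f_k$ are linear combinations of $g,h$ over $\mathcal O_{X,x}$, so the embedding of $Y$ in $X\times\PP^n$ factors through the graph of $(g:h)$. The argument of Lemma~\ref{l-rational-maps-fibre-P1-deg-1} (via Lemma~\ref{lem-finite-morphism-fibrewise-closed-immersion}) identifies the polarised graphs. By the free-resolution description, $Y$ is the hypersurface $hu-gv=0$, and $\operatorname{ord}_x g=1$, which is (2).

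For $(2)\Rightarrow(3)$ and the final assertion I would compute locally. With $g=s$ a coordinate, write $h\equiv \varepsilon t^m \pmod s$ with $\varepsilon$ a unit; here $m\ge1$ because $f$ is singular at $x$.
\begin{itemize}
\item On the chart $v=1$, the equation reads $s=hu$ and $Y$ is smooth.
\item On the chart $u=1$, the equation reads $h=sv$. Modulo the substitution, this is $\varepsilon t^m+s(\ast)=sv$, i.e.\ after a coordinate change $t^m=s v'$.
\end{itemize}
So the only possible singular point is an $A_{m-1}$ point at $v=0$, as in Example~\ref{exa-rational-map-(x : y^n)}. In particular $Y$ is normal with canonical singularities, $Y_x=\{x\}\times\PP^1$ scheme-theoretically, and $\mathcal O_Y(1)|_{Y_x}\simeq\mathcal O(1)$. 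This gives (3).

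For $(3)\Rightarrow(4)$: the exceptional locus is the single curve $C=Y_x$ with $C^2<0$. Since $Y$ is canonical and $X$ is smooth, $K_Y=\pi^*K_X+aC$ with $a>0$ (the discrepancy is computed on a resolution). Hence $-K_Y\cdot C=-aC^2>0$, i.e.\ $-K_Y$ is ample over $X$.

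For $(4)\Rightarrow(1)$: $Y$ is normal, so $Y^\nu=Y$. The fibre $Y_x$ is the Cartier divisor cut out by the invertible ideal $I\mathcal O_Y=\mathcal O_Y(1)$. A Cartier divisor on a normal surface has no embedded points, so $Y_x=mC$ scheme-theoretically. Then $1=\deg\mathcal O_Y(1)|_{Y_x}=m\deg\mathcal O(1)|_C$ forces $m=1$ and $\deg\mathcal O(1)|_C=1$. An integral curve with a globally generated degree-one line bundle mapping it to $\PP^n$ is a line, so $Y_x\simeq\PP^1$ linearly embedded, which is (1).

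I expect the main obstacle to be $(6)\Rightarrow(2)$, specifically showing that the whole base ideal is generated by two general members and that the full polarised graph, not just its normalisation, coincides with the pencil's graph. The DVR argument is what I would try first. The scheme-theoretic reducedness of $Y_x$ in $(4)\Rightarrow(1)$ is the other delicate point.
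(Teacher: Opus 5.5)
\textbf{The gap is in your step $(4)\Rightarrow(1)$.} The scheme-theoretic fibre $Y_x$ is cut out by $m_x\mathcal O_Y$, not by $I\mathcal O_Y$. The invertible ideal $I\mathcal O_Y=\mathcal O_Y(1)$ defines the exceptional Cartier divisor $E$ of the blowup of $I$, and you only have $Y_x\subseteq E$.

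The example $f=(x:y^n)$, $n\ge 2$, that you cite shows the difference. There (4) holds and $E=nC$, but $Y_x=C$ is reduced. Moreover $m_o\mathcal O_Y=(x,y)$ is not invertible at the $A_{n-1}$ point $y^n=vx$, because the line $x=y=0$ is not Cartier there. So $Y_x$ need not be Cartier, and the reasoning ``no embedded points, hence $Y_x=mC$'' has no basis. If you apply your degree computation to the divisor $I\mathcal O_Y$ actually defines, you get $\deg\mathcal O_Y(1)|_E=-E^2=n\neq 1$, contradicting your own equation.

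What the degree condition really gives is only $[Y_x]=C$ and $\deg\mathcal O_Y(1)|_C=1$. Linear type needs $Y_x$ to be scheme-theoretically $\PP^1$, i.e.\ to have no embedded points, and that is not proved.

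\textbf{The repair is to avoid proving $(4)\Rightarrow(1)$ directly.}
\begin{enumerate}
\item Under (4), $Y$ is normal, so $\delta_x(f)=\deg\mathcal O_Y(1)|_{Y_x}=1$.
\item Then (6) holds by Lemma~\ref{l-rational-maps-mult=fib-degree=inverse-can-threshold}.
\item Your $(6)\Rightarrow(2)\Rightarrow(3)$ gives $Y_x=\{x\}\times\PP^1$ scheme-theoretically.
\item $(3)\Rightarrow(1)$ is immediate: canonical surfaces are normal, so $Y^\nu=Y$.
\end{enumerate}
This is essentially how the paper closes its loop, via $(4)\Rightarrow(5)\Rightarrow(6)\Rightarrow(1)$.

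\textbf{With this fix, your key step genuinely differs from the paper's.} The paper proves $(6)\Rightarrow(1)$ by a minimal blowup resolution, which is a chain of $(-2)$-curves ending in a $(-1)$-curve. It then identifies $Y^\nu$ with the graph of a general pencil by comparing contractions. It obtains $(1)\Rightarrow(2)$ from Lemmas~\ref{lem-finite-morphism-fibrewise-closed-immersion} and \ref{l-rational-maps-fibre-P1-deg-1}. Your argument in the DVR $\mathcal O_{X,x}/(g)$ gives the stronger ideal-theoretic statement $I=(g,h)$ directly and needs no resolution.

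\textbf{Two small corrections.}
\begin{enumerate}
\item No coordinate change on $\PP^n$ is needed, and it would be awkward, since (2) refers to the given $f_i$. The order at $x$ of a general combination is $\min_k\operatorname{ord}_x\widetilde f_k$, so some $\widetilde f_i$ already has order one. The minimal order of the image of $I$ in $\mathcal O_{X,x}/(\widetilde f_i)$ is then attained by some $\widetilde f_j$, which gives $I=(\widetilde f_i,\widetilde f_j)$.
\item On the chart $v=1$, the equation $s=hu$ is not a graph, since $h$ involves $s$. So the $A_{m-1}$ point may lie in that chart too, and the claim that this chart is smooth is false in general. This is harmless: the chart $u=1$ misses only the point $(0:1)$ of $Y_x$, and $Y$ is smooth there.
\end{enumerate}
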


\begin{proof}
After shrinking $X$, we may assume that $f$ is regular away from $x$.

$(1)\implies(2)$. Let $\nu\colon Y^\nu\to Y$ be the normalisation. Since $f$ is linear type,
$$
Y^\nu_x\simeq\PP^1,
\qquad
\mathcal{O}_{Y^\nu}(1)|_{Y^\nu_x}\simeq\mathcal{O}_{\PP^1}(1).
$$
Hence the induced morphism $Y^\nu_x\to\PP^n$ is an isomorphism onto a line, so $Y^\nu_x\to Y_x$ is a closed immersion. Over $X\setminus\{x\}$, the morphism $\nu$ is an isomorphism. Thus $\nu$ is fibrewise a closed immersion, and Lemma~\ref{lem-finite-morphism-fibrewise-closed-immersion} shows that $\nu$ is an isomorphism.

Lemma~\ref{l-rational-maps-fibre-P1-deg-1} gives the hypersurface description in (2). If both $\widetilde f_i$ and $\widetilde f_j$ had order at least two at $x$, the hypersurface would be singular along $Y_x$, contradicting the normality of $Y$. Since neither has order zero, one of them has order one.

$(2)\implies(3)$. Assume that $\operatorname{ord}_x(\widetilde f_i)=1$. In the completed local ring at $x$, 
choose regular parameters $z,y$ at $x$ such that
$
\widetilde f_i=z.
$
Since $\widetilde f_i$ and $\widetilde f_j$ are coprime, the image of $\widetilde f_j$ in the DVR
$$
\widehat{\mathcal{O}}_{X,x}/(z)
$$
is non-zero: since $R\to\widehat R$ is faithfully flat,
$$
(z)\widehat{R}\cap R=(z);
$$
thus, if $\widetilde f_j$ belonged to $(z)\widehat R$, it would already belong to $(z)$ in $R$, contradicting the coprimeness of $\widetilde f_i$ and $\widetilde f_j$.

As $\widetilde f_j$ vanishes at $x$, this image is of the form
$
\overline{b}\,\overline{y}^{\,m}
$
for some $m\geq1$ and some unit $\overline{b}$. Lifting $\overline{b}$ to a unit $b\in\widehat{\mathcal{O}}_{X,x}$, we obtain
$$
\widetilde f_j=az+b y^m
$$
for some $a\in\widehat{\mathcal{O}}_{X,x}$.

On the chart $u=1$, the equation of $Y$ is formally equivalent to
$$
zv-y^m=0.
$$
Thus $Y$ is smooth if $m=1$, and otherwise has one singularity of type $A_{m-1}$. Moreover,
$$
Y_x=\{x\}\times\PP^1
$$
scheme-theoretically and
$$
\mathcal{O}_Y(1)|_{Y_x}\simeq\mathcal{O}_{\PP^1}(1).
$$

$(3)\implies(4)$. Put $E=Y_x$ which is the only exceptional curve of $\pi$. Since $Y$ is canonical, it is $\mathbb{Q}$-factorial. As $E$ is the only exceptional curve of $\pi$, we can write
$$
K_Y=\pi^*K_X+aE
$$
for some $a>0$. Since $E^2<0$ and $\rho(Y/X)=1$, the divisor $-K_Y$ is ample over $X$.

$(4)\implies(5)$. Since $Y$ is normal,
$$
\delta_x(f)=\deg\mathcal{O}_Y(1)|_{Y_x}=1.
$$
Lemma~\ref{l-rational-maps-mult=fib-degree=inverse-can-threshold} gives
$$
\lambda_x^1(f)=1.
$$

$(5)\implies(6)$. By the same lemma,
$$
\mu_xf^*H=\frac{1}{\lambda_x^1(f)}=1,
$$
so $f^*H$ is smooth at $x$.

$(6)\implies(1)$. Put $S=f^*H$, and take a minimal sequence of point blowups resolving $f$:
$$
W=X_l\longrightarrow\cdots\longrightarrow X_1\longrightarrow X_0=X.
$$
Let $S_i$ be the birational transform of $S$ on $X_i$, and let $E_i$ be the exceptional curve created on $X_i$. Every singular point of the induced map $X_i\bir \PP^n$ lies on $S_i$. Since $S$ is smooth at $x$, all the blowups are single type and the exceptional locus is a chain
$$
E_1-\cdots-E_l
$$
with
$$
E_i^2=-2\quad\text{for }i<l,
\qquad
E_l^2=-1.
$$
If $h\colon W\to\PP^n$ is the induced morphism, then $h^*H=S_l$, which is disjoint from $E_1,\ldots,E_{l-1}$ and meets $E_l$ transversally in one point. Hence $W\to Y^\nu$ contracts precisely $E_1,\ldots,E_{l-1}$ and maps $E_l$ with degree one.

Choose another general hyperplane $H'\subset\PP^n$. After shrinking $X$, the divisors $h^*H$ and $h^*H'$ have no common point near the exceptional locus and define a morphism $W\to\PP^1$. Let $Z$ be the graph of the induced pencil on $X$. Since $S$ is smooth at $x$, the argument in $(2)\implies(3)$ shows that $Z$ is normal. Both $W\to Y^\nu$ and $W\to Z$ contract precisely $E_1,\ldots,E_{l-1}$, hence
$$
Y^\nu\simeq Z.
$$
Therefore
$$
Y^\nu_x\simeq\PP^1,
\qquad
\mathcal{O}_{Y^\nu}(1)|_{Y^\nu_x}\simeq\mathcal{O}_{\PP^1}(1),
$$
so $f$ is linear type at $x$.
\end{proof}

%%%%%%%%%%%%%%%%%%%%%%%%%%%%%%
\subsection{Examples}

\begin{exa}
Recall the rational map
$$
f\colon\A^2\bir\PP^1,
\qquad
f=(x:y^n),
\qquad
n\ge 1
$$
in Example \ref{exa-rational-map-(x : y^n)}. This is of linear type.
\end{exa}

%%%%%%%%%%%%%%%%%%%%%%%%%%%%%%
%%%%%%%%%%%%%%%%%%%%%%%%%%%%%%
\section{\bf Linear type maps on singular surfaces}

In this section we study the existence of non-trivial linear type maps on surfaces. On smooth surfaces such maps arise naturally from maximal ideal maps at smooth points. On singular surfaces, however, their existence is much more subtle, and they need not exist even for arbitrary canonical surface singularities. We first treat affine toric surfaces before moving onto other classes of surface singularities.

%%%%%%%%%%%%%%%%%%%%%%%%%%%%%%
\subsection{Toric surfaces}

The following theorem gives a criterion for a toric map to be linear type and shows that every affine toric surface admits such a non-trivial map.

\begin{thm}[Linear type maps on toric surfaces]
\label{t-linear-type-maps-on-toric-surfaces}
Let $N=\Z^2$ be a lattice of rank two, let
$$
\sigma=\langle v_0,v_1\rangle\subset N_{\mathbb R}
$$
be a strongly convex rational polyhedral cone, and let
$$
X=X_\sigma,\qquad x=x_\sigma.
$$
Let
$$
\alpha\colon N\longrightarrow\mathbb Z
$$
be a nonzero homomorphism such that $\alpha(v_0)$ and $\alpha(v_1)$ have opposite signs, and let $w\in\operatorname{Int}(\sigma)\cap N$ be the primitive generator of $\ker(\alpha)$. Recall
$$
\mu_\sigma(w)
=
\min\{\langle m,w\rangle>0\mid m\in\sigma^\vee\cap M\}
$$
and
$$
d(\alpha)=[\mathbb Z:\alpha(N)]
$$
from Theorem \ref{thm:toric-normalised-graph-fibre-degree}.
Then the induced toric rational map
$$
f\colon X\dashrightarrow\mathbb P^1
$$
is singular only at $x$, and it is linear type at $x$ if and only if
$$
\delta_x(f)=1, \ i.e. \ \mu_\sigma(w)=d(\alpha)=1.
$$
\end{thm}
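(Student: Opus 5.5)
We rely on the explicit description of the normalised graph in Theorem \ref{thm:toric-normalised-graph-fibre-degree}. First, $f$ is singular only at $x$. The faces of $\sigma$ other than $\sigma$ itself are the origin and the two rays $\langle v_0\rangle$, $\langle v_1\rangle$. The fan of $\PP^1$ pulls back along $\alpha$ to a subdivision of $\sigma$ which inserts only the ray $\R_{\ge0}w$, and $w$ lies in the interior of $\sigma$. Hence the two rays of $\sigma$ are not subdivided. Each ray $\langle v_i\rangle$ maps into a single cone of the fan of $\PP^1$, so on the open toric subset $X_{\langle v_i\rangle}$ the map $f$ is induced by a map of fans and is therefore a morphism. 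Equivalently, $p\colon Y^\nu\to X$ is an isomorphism over $X\setminus\{x\}$, because the exceptional divisor $E=D_w$ maps onto the torus-fixed point $x$. Moreover $f$ is singular at $x$, since $p^{-1}(x)\supseteq E\simeq\PP^1$ is positive dimensional.

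For the equivalence, Theorem \ref{thm:toric-normalised-graph-fibre-degree} gives $[Y^\nu_x]=\mu_\sigma(w)E$ with $E\simeq\PP^1$ and $\deg\mathcal O_{Y^\nu}(1)|_E=d(\alpha)$. Since both factors are positive integers, $\delta_x(f)=\mu_\sigma(w)d(\alpha)=1$ if and only if $\mu_\sigma(w)=d(\alpha)=1$.

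If $f$ is linear type at $x$, then $Y^\nu_x\simeq\PP^1$ with polarisation of degree one, so $\delta_x(f)=1$.

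Conversely, assume $\mu_\sigma(w)=d(\alpha)=1$. Then $q|_E\colon E\to\PP^1$ has degree $d(\alpha)=1$, so it is an isomorphism and $\mathcal O_{Y^\nu}(1)|_E\simeq\mathcal O_{\PP^1}(1)$. It remains to show that the scheme-theoretic fibre $Y^\nu_x$ equals the reduced curve $E$, and not merely that its cycle is $E$. We check this toric chart by chart. On the chart $U_{\tau}$ with $\tau=\langle v_0,w\rangle$ (and similarly for $\langle w,v_1\rangle$), the ideal $\mathfrak m_x\mathcal O_{Y^\nu}$ is generated by the characters $\chi^m$ with $0\neq m\in\sigma^\vee\cap M$. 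The ideal of $E$ in $U_\tau$ is generated by the $\chi^{m'}$ with $m'\in\tau^\vee\cap M$ and $\langle m',w\rangle>0$.

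Choose $m\in\sigma^\vee\cap M$ with $\langle m,w\rangle=1$; this is possible because $\mu_\sigma(w)=1$. For a generator $\chi^{m'}$ of the ideal of $E$ we have $\langle m',w\rangle\ge1$, and we want to write $m'=m+(m'-m)$ with $m'-m\in\tau^\vee$. This step is the main obstacle. The inequality $\langle m'-m,w\rangle\ge0$ holds, but $\langle m'-m,v_0\rangle\ge0$ need not hold for this particular $m$.

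The first thing we would try is to use several $m$. The set $\{m\in\sigma^\vee\cap M:\langle m,w\rangle=1\}$ lies on a line and meets $\sigma^\vee$ in a segment, and we would choose the element $m$ minimising $\langle m,v_0\rangle$. A direct argument in the $2$-dimensional lattice should then show that $\chi^{m'}\in(\chi^m)\mathcal O(U_\tau)+\mathfrak m_x\mathcal O(U_\tau)$. The idea is to take the lattice point on the edge $\langle m,v_0\rangle$-minimal side, using that $\tau^\vee$ is spanned by $w^\perp$ and $v_0^\perp$ directions, so the semigroup of the ideal of $E$ is generated by elements $m'$ with $\langle m',w\rangle=1$.

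If this proves messy, we would instead argue as follows. The fibre $Y^\nu_x$ is an effective Cartier divisor only where $\mathfrak m_x\mathcal O_{Y^\nu}$ is invertible, which need not be the case, so instead we can use $\mathcal O_{Y^\nu}(1)$. Reducedness of the fibre follows because $\mathfrak m_x\mathcal O_{Y^\nu}$ contains the $T$-invariant ideal of $E$. Both ideals are monomial, and in a $2$-dimensional cone the monomials $m'\in\tau^\vee$ with $\langle m',w\rangle\ge1$ are generated by those with $\langle m',w\rangle=1$ together with the $w^\perp$ direction, and these lie in $\sigma^\vee$.

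Once $Y^\nu_x=E\simeq\PP^1$ with $\mathcal O(1)$, $f$ is linear type at $x$.
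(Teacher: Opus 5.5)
Your proof that $f$ is singular only at $x$, the equivalence $\delta_x(f)=1\iff\mu_\sigma(w)=d(\alpha)=1$, the direction linear type $\Rightarrow \delta_x(f)=1$, and the fact that $q|_E$ is an isomorphism are all fine and agree with the paper. The gap is the step you flag as the main obstacle: proving $I_E\subseteq m_x\mathcal O_{Y^\nu}$ on each chart, i.e.\ that $Y^\nu_x$ is reduced. Neither of your routes proves it, and the claims they rest on are false even when $\mu_\sigma(w)=d(\alpha)=1$.

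Take the paper's example $\sigma=\langle(1,0),(1,r)\rangle$, $\alpha(a,b)=a-rb$, $w=(r,1)$ with $r\ge2$. Then $v_0=(1,r)$, $\tau=\langle v_0,w\rangle$ and $\tau^\vee=\langle(r,-1),(-1,r)\rangle$.
\begin{itemize}
\item The monomial $\chi^{(r,-1)}$ lies in $I_E$, with $\langle(r,-1),w\rangle=r^2-1$. Because $\langle(r,-1),v_0\rangle=0$, it is not divisible in $k[\tau^\vee\cap M]$ by any $\chi^{m''}$ with $\langle m'',w\rangle=1$. So $I_E$ is not generated by such monomials.
\item The points $(0,1)+j(-1,r)$, $j\ge1$, lie in $\tau^\vee$ with $\langle\cdot,w\rangle=1$ but not in $\sigma^\vee$.
\item The $w^\perp$-ray $(-1,r)=-\alpha$ of $\tau^\vee$ is not in $\sigma^\vee$ either, so ``these lie in $\sigma^\vee$'' fails.
\end{itemize}
Also, since $\chi^m\in m_x$, the containment $\chi^{m'}\in(\chi^m)+m_x\mathcal O(U_\tau)$ merely restates the goal.

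The missing idea is a case split combined with $w^\perp\cap M=\Z\alpha$, which is exactly where $d(\alpha)=1$ (primitivity of $\alpha$) enters. Order so that $\alpha(v_0)<0<\alpha(v_1)$. Fix any $m\in\sigma^\vee\cap M$ with $\langle m,w\rangle=1$, and let $m'\in\tau^\vee\cap M$ with $c=\langle m',w\rangle>0$.
\begin{itemize}
\item If $m'\in\sigma^\vee$, then $\chi^{m'}\in m_x$ directly. This is what disposes of $\chi^{(r,-1)}$ above.
\item Otherwise $\langle m',v_1\rangle<0$. Since $cm-m'\in w^\perp\cap M$, write $cm=m'+j\alpha$; evaluating on $v_1$ forces $j>0$. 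Then $-j\alpha\in\tau^\vee$, because $\alpha(v_0)<0$ and $\alpha(w)=0$. Hence $\chi^{m'}=\chi^{cm}\chi^{-j\alpha}\in m_x\mathcal O(U_\tau)$.
\end{itemize}
The chart $\langle w,v_1\rangle$ is symmetric: now $\langle m',v_0\rangle<0$ forces $j<0$, and $-j\alpha$ is positive on $v_1$. With this split any such $m$ works, and no minimisation of $\langle m,v_0\rangle$ is needed. This is precisely the paper's argument.
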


\begin{proof}
Let
$$
p\colon Y^\nu\longrightarrow X,\qquad
q\colon Y^\nu\longrightarrow\mathbb P^1
$$
be the normalised graph which is given by subdividing $\sigma$ by inserting $\R_{\ge 0}w$. Let $E=D_w$ be the exceptional curve. By Theorem~\ref{thm:toric-normalised-graph-fibre-degree},
$$
[Y^\nu_x]=\mu_\sigma(w)E,
$$
and
$$
E\simeq\mathbb P^1,\qquad
\mathcal{O}_{Y^\nu}(1)|_E
\simeq
\mathcal{O}_{\mathbb P^1}(d(\alpha)).
$$
Thus linear type implies
$$
\delta_x(f)=\mu_\sigma(w)d(\alpha)=1, \qquad \mu_\sigma(w)=d(\alpha)=1.
$$

Conversely, assume these equalities hold. Then $\alpha$ is surjective, hence is primitive (i.e. primitive when viewed as an element of the dual lattice $M$) and $q|_E$ is an isomorphism. 

We show the fibre $Y^\nu_x$ is reduced. 
More precisely, we show 
$$
m_x\mathcal{O}_{Y^\nu}=I_E
$$
where the left hand side is the ideal sheaf of $Y^\nu_x$ and the right hand side is the ideal sheaf of $E$. It is clear that 
$$
m_x\mathcal{O}_{Y^\nu}\subseteq I_E.
$$
We will show the opposite inclusion.

Possibly switching $v_0,v_1$, we can assume
$$
\alpha(v_0)<0<\alpha(v_1).
$$
Since $\mu_\sigma(w)=1$, there is $m\in\sigma^\vee\cap M$ such that
$$
\langle m,w\rangle=1.
$$

 Consider the chart
$$
U_0=X_{\langle v_0,w\rangle}\subset Y^\nu.
$$
Then by toric geometry \cite[Page 53]{Fulton-toric-varieties}\cite[Proposition 4.3.3]{Cox-Little-Schenck}, $I_E$ on this chart is given by the ideal generated by
$$
\{\chi^u \mid u\in\langle v_0,w\rangle^\vee\cap M,
\ \langle u,w\rangle>0\}. 
$$
Pick $u$ as in this set and let 
$$
r:=\langle u,w\rangle>0.
$$
If $u\in\sigma^\vee$, then $\chi^u\in m_x$, hence $\chi^u\in m_x\mathcal{O}_{U_0}$. Assume $u\notin\sigma^\vee$ in which case
$$
\langle u,v_1\rangle<0.
$$
We will show that again $\chi^u\in m_x\mathcal{O}_{U_0}$.
Since 
$$
\langle rm-u,w\rangle=0, \ \mbox{and $\alpha$ is primitive}, 
$$
there is an integer $k$ such that
$$
rm=u+k\alpha.
$$
Moreover, $k>0$ because $\langle rm,v_1\rangle\ge 0$, $\langle u,v_1\rangle<0$ while $\alpha(v_1)>0$.

Since $r>0$,
$$
u+k\alpha=rm\in\sigma^\vee\cap M.
$$
Moreover,
$$
-k\alpha\in\langle v_0,w\rangle^\vee\cap M,
$$
so
$$
\chi^u
=
\chi^{u+k\alpha}\chi^{-k\alpha}
\in
m_x\mathcal{O}_{U_0}.
$$
Hence
$$
m_x\mathcal{O}_{U_0}=I_{E\cap U_0}.
$$

Similar arguments apply on the other chart 
$$
U_1=X_{\langle w,v_1\rangle}.
$$
Indeed, $I_E$ on this chart is given by the ideal generated by
$$
\{\chi^t \mid t\in\langle w,v_1\rangle^\vee\cap M,
\ \langle t,w\rangle>0\}. 
$$
Pick $t$ as in this set and let 
$$
s:=\langle t,w\rangle>0.
$$
If $t\in\sigma^\vee$, then $\chi^t\in m_x$, hence $\chi^t\in m_x\mathcal{O}_{U_1}$. Assume $t\notin\sigma^\vee$ in which case
$$
\langle t,v_0\rangle<0.
$$
We will show that again $\chi^t\in m_x\mathcal{O}_{U_1}$.
Since 
$$
\langle sm-t,w\rangle=0, \ \mbox{and $\alpha$ is primitive}, 
$$
there is an integer $l$ such that
$$
sm=t+l\alpha.
$$
Moreover, $l<0$ because $\langle sm,v_0\rangle\ge 0$, $\langle t,v_0\rangle<0$ while $\alpha(v_0)<0$.

Since $s>0$,
$$
t+l\alpha=sm\in\sigma^\vee\cap M.
$$
Moreover,
$$
-l\alpha\in\langle w,v_1\rangle^\vee\cap M,
$$
so
$$
\chi^t
=
\chi^{t+l\alpha}\chi^{-l\alpha}
\in
m_x\mathcal{O}_{U_1}.
$$
Hence
$$
m_x\mathcal{O}_{U_1}=I_{E\cap U_1}.
$$

Therefore
$$
m_x\mathcal{O}_{Y^\nu}=I_E,
$$
so
$$
Y^\nu_x=E\simeq\mathbb P^1
$$
scheme-theoretically. Since $q|_E$ is an isomorphism,
$$
\mathcal{O}_{Y^\nu}(1)|_{Y^\nu_x}
\simeq
\mathcal{O}_{\mathbb P^1}(1).
$$
Thus $f$ is linear type at $x$.
\end{proof}

\begin{cor}[Toric surfaces admit singular linear type maps]\label{c-toric-surfaces-admit-linear-type-map}
Every affine toric surface associated to a two-dimensional strongly convex cone admits a toric singular linear type map to $\mathbb P^1$ at its torus-fixed closed point.
\end{cor}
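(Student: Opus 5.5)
By Theorem \ref{t-linear-type-maps-on-toric-surfaces}, it suffices to produce a surjective homomorphism $\alpha\colon N\to\mathbb Z$ with three properties: $\alpha(v_0)$ and $\alpha(v_1)$ have opposite signs, the primitive generator $w$ of $\ker\alpha$ lies in $\operatorname{Int}(\sigma)$, and $\mu_\sigma(w)=1$. The plan is to choose $w$ first and then take $\alpha$ to be a primitive element of $M$ vanishing on $w$. Such an $\alpha$ is unique up to sign and automatically surjective, so $d(\alpha)=1$.

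First choose coordinates. After an automorphism of $N$ we may put $\sigma$ in normal form
$$
\sigma=\langle(0,1),(n,-q)\rangle,\qquad n\ge1,\ 0\le q<n,\ \gcd(n,q)=1.
$$
This is the standard normal form for a two-dimensional strongly convex cone (\cite{Cox-Little-Schenck}).

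Next take $w=(1,0)$. This vector is primitive, and it lies in the interior of $\sigma$, since $n(1,0)=(n,-q)+q(0,1)$ with both coefficients positive when $q\ge1$. The case $q=0$ forces $n=1$, so $X=\mathbb A^2$ with $\sigma=\langle(0,1),(1,0)\rangle$. There one takes $w=(1,1)$ and $\alpha(a,b)=a-b$ instead, which gives $f=(x:y)$ and is handled as in the blowup example.

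For $q\ge1$, take $m=(1,0)\in M$. Then $\langle m,(0,1)\rangle=0$ and $\langle m,(n,-q)\rangle=n\ge0$, so $m\in\sigma^\vee\cap M$. Since $\langle m,w\rangle=1$, we get $\mu_\sigma(w)=1$.

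Now set $\alpha(a,b)=b$. This is primitive and vanishes on $w$. It takes opposite signs on the rays: $\alpha(0,1)=1>0$ and $\alpha(n,-q)=-q<0$. By Theorem \ref{t-linear-type-maps-on-toric-surfaces}, $f$ is singular only at $x$ and is linear type there, because $\mu_\sigma(w)=d(\alpha)=1$.

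The only step needing care is checking that $w$ really lies in $\operatorname{Int}(\sigma)$ and that the edge cases are covered; the normal form handles this. A coordinate-free alternative works just as well. Take a primitive $m\in\sigma^\vee\cap M$ on one boundary ray of $\sigma^\vee$, so that $m$ vanishes on $v_0$ and is positive on $v_1$. Choose $w\in N$ with $\langle m,w\rangle=1$; it exists because $m$ is primitive. Adding multiples of $v_0$ moves $w$ into $\operatorname{Int}(\sigma)$ without changing $\langle m,w\rangle$, and such a $w$ is automatically primitive. Then let $\alpha$ be the primitive functional vanishing on $w$. It takes opposite signs on $v_0$ and $v_1$, since $w$ is interior.
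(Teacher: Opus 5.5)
Your proof is correct and is essentially the paper's argument. The paper takes the primitive $m\in\sigma^\vee\cap M$ vanishing on $v_0$, picks a lattice vector pairing to $1$ with $m$, adds $lv_0$ for $l\gg0$ to get $w\in\operatorname{Int}(\sigma)$, and lets $\alpha\colon N\to N/\mathbb Z w\simeq\mathbb Z$ be the quotient; this is exactly your coordinate-free alternative, and your normal-form computation is the same construction in explicit coordinates (with $v_0=(0,1)$, $m=(1,0)$, and $l=0$ when $q\geq1$, $l=1$ when $q=0$).
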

\begin{proof}
Say 
$$
\sigma=\langle v_0,v_1\rangle\subset N_{\mathbb R},
\qquad  X=X_\sigma, \qquad x=x_\sigma.
$$ 
We can assume $X$ is singular at $x$. 
Let $m\in M$ be the primitive generator of the ray of $\sigma^\vee$ vanishing on $v_0$. Choose $n\in N$ such that
$$
\langle m,n\rangle=1.
$$
For $l\gg0$, set
$$
w=n+lv_0.
$$
Then
$$
w\in\operatorname{Int}(\sigma),
\qquad
\langle m,w\rangle=1.
$$
In particular, $w$ is primitive. Let
$$
\alpha\colon N\longrightarrow N/\mathbb Zw\simeq\mathbb Z
$$
be the quotient homomorphism. Then $\alpha(v_0), \alpha(v_1)$ have opposite signs, and
$$
d(\alpha)=1, \qquad \mu_\sigma(w)=1.
$$
Hence the induced map $f\colon X\bir \PP^1$ is linear type. 
\end{proof}

%%%%%%%%%%%%%%%%%%%%%%%%
\subsection{Toric examples}

\begin{exa}[Surface toric projections]
Let
$$
N=\mathbb Z^2,\qquad w=(0,1),
$$
and let $\sigma\subset N_{\mathbb R}$ be a two-dimensional cone with $w\in\operatorname{Int}(\sigma)$. For $r\geq1$, consider
$$
\alpha_r\colon N\longrightarrow\mathbb Z,\qquad
\alpha_r(a,b)=ra.
$$
Let
$$
f_r\colon X_\sigma\dashrightarrow\mathbb P^1
$$
be the induced toric rational map, and let $E=D_w$. Then
$$
[Y^\nu_x]=\mu_\sigma(w)E,
\qquad
\mathcal{O}_{Y^\nu}(1)|_E
\simeq
\mathcal{O}_{\mathbb P^1}(r),
$$
and hence
$$
\delta_x(f_r)=r\mu_\sigma(w).
$$
Thus $f_r$ is linear type if and only if
$$
r=\mu_\sigma(w)=1.
$$
\end{exa}

\begin{exa}[A primitive projection which is not linear type]
Let
$$
\sigma=\langle(5,-1),(-2,1)\rangle\subset\mathbb R^2,
\qquad
w=(0,1),
$$
and consider the homomorphism
$$
\alpha\colon\mathbb Z^2\longrightarrow\mathbb Z,\qquad
\alpha(a,b)=a.
$$
Then $m=(s,t)\in M\cap \sigma^\vee$ iff
$$
2s\leq t\leq5s.
$$
On the other hand, $\langle m,w\rangle=t$, hence there is no $m\in M\cap \sigma^\vee$ with $\langle m,w\rangle=1$. 
But 
$$
m=(1,2)\in M\cap \sigma^\vee,
$$ 
so  
$
\mu_\sigma(w)=2.
$
Therefore, 
$$
\delta_x(f)=2.
$$
Thus $f$ is not linear type, even though $\alpha$ is primitive.
\end{exa}

\begin{exa}[Linear type maps from canonical surfaces with deeply singular normalised graphs]
Let $r\geq 2$, let $N=\mathbb Z^2$, and consider
$$
\sigma=\langle (1,0),(1,r)\rangle\subset N_{\mathbb R}.
$$
Let $X=X_\sigma$ and let $x=x_\sigma$ be its torus-fixed point. The surface $X$ has a canonical singularity at $x$; in fact, it is of type $A_{r-1}$.

Consider the homomorphism
$$
\alpha\colon N\longrightarrow\mathbb Z,\qquad
\alpha(a,b)=a-rb.
$$
We have
$$
\alpha(1,0)=1,\qquad
\alpha(1,r)=1-r^2<0,
$$
and the primitive generator of $\ker(\alpha)$ in $\operatorname{Int}(\sigma)$ is
$$
w=(r,1).
$$
Moreover, $d(\alpha)=1$, and $(0,1)\in\sigma^\vee\cap M$ satisfies
$$
\langle(0,1),w\rangle=1.
$$
Hence $\mu_\sigma(w)=1$, so by Theorem~\ref{t-linear-type-maps-on-toric-surfaces}, the induced toric rational map
$$
f\colon X\dashrightarrow\mathbb P^1
$$
is linear type at $x$.

On the other hand, the normalised graph $Y^\nu$ is obtained by subdividing $\sigma$ along $\mathbb R_{\geq0}w$. Its maximal cones are
$$
\langle(1,0),(r,1)\rangle,\qquad
\langle(r,1),(1,r)\rangle.
$$
The first is smooth. On the second cone the toric log discrepancy function is
$$
\psi(a,b)=\frac{a+b}{r+1}.
$$
The lattice point $(1,1)$ lies in its interior, and every interior lattice point $(a,b)$ has $a,b\geq1$. Therefore
$$
\theta_x(f)=\mld(Y^\nu,0)=\frac{2}{r+1}.
$$
Thus $X$ is canonical and $f$ is linear type, while the singularities of $Y^\nu$ become arbitrarily deep klt as $r\to\infty$.
\end{exa}

This example is in sharp contrast with the case when $X$ is smooth in which case $Y^\nu$ has canonical singularities, by Theorem \ref{t-linear-type-maps-smooth-surfaces}.

%%%%%%%%%%%%%%%%%%%%%%%%%%%%%%
\subsection{Rational singularities: degree one implies linear type}

\begin{thm}
\label{t-degree-one-linear-type-rational-surfaces}
Let $X$ be a normal surface and $x\in X$ a smooth or rational singularity. Let
$$
f\colon X\dashrightarrow \mathbb P^n
$$
be a rational map singular at $x$. Then $f$ is linear type at $x$ if and only if
$$
\delta_x(f)=1.
$$
\end{thm}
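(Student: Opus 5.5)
The forward direction is immediate. If $f$ is linear type at $x$ and singular there, then $(Y^\nu_x,\mathcal O_{Y^\nu}(1)|_{Y^\nu_x})\simeq(\PP^1,\mathcal O_{\PP^1}(1))$, so $\delta_x(f)=1$. The work is in the converse. Assume $\delta_x(f)=1$. I would show two things: the scheme-theoretic fibre $Y^\nu_x$ is a reduced irreducible curve $E$, and $q|_E$ maps $E$ isomorphically onto a line.

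\textbf{Step 1: the fibre is a single curve of degree one.} The degree $\delta_x(f)=\mathcal O_{Y^\nu}(1)\cdot[Y^\nu_x]$ is a sum over the components $E_i$ of $Y^\nu_x$, weighted by their multiplicities $m_i$. Since $f$ is singular at $x$, the fibre is one-dimensional. Each $E_i$ has $\mathcal O_{Y^\nu}(1)\cdot E_i\ge 1$, because $q$ is finite on $p$-exceptional curves: $\mathcal O_{Y^\nu}(1)$ is ample over $X$, as $Y^\nu\to Y\subset X\times\PP^n$ is finite. Hence $\delta_x(f)=1$ forces $[Y^\nu_x]=E$ with multiplicity one, and $\deg q^*\mathcal O(1)|_E=1$. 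Then $q|_E$ is birational onto a line $\ell$, so it is a morphism from the normalisation $\PP^1$ of $E$ onto $\ell$ of degree $1$.

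\textbf{Step 2: use rationality to get $E\simeq\PP^1$ and a reduced fibre.} Take the minimal resolution $\psi\colon W\to Y^\nu$ and set $\phi=p\circ\psi$. Since $x$ is a rational singularity, I would invoke Lipman's theory as in the proof of Theorem \ref{t-lct-degree-inequality-for-rational-maps-on-surfaces}. First, choose a resolution $\phi\colon V\to X$ factoring through $Y^\nu$ with $m_x\mathcal O_V=\mathcal O_V(-N)$, where $N$ is the fundamental cycle. The argument there gives $H^1(N,\mathcal O_N)=0$ and $H^0(N,\mathcal O_N)=k$. Pushing forward to $Y^\nu$, I get $m_x\mathcal O_{Y^\nu}$, whose quotient $\mathcal O_{Y^\nu_x}$ satisfies $H^1=0$. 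This uses that $Y^\nu$ has rational singularities over $x$. That holds because $R^1$ vanishing for $W\to X$ descends: $R^1\phi_*\mathcal O=0$ implies $R^1\psi_*\mathcal O_W=0$ by the Leray spectral sequence, since $p$ has fibres of dimension at most one, so $R^1p_*\psi_*\to R^1\phi_*$ is injective. Thus $Y^\nu_x$ is a one-dimensional scheme with $H^1(\mathcal O)=0$ and generic multiplicity one along $E$. It follows that $p_a(E)=0$, so $E\simeq\PP^1$, and $q|_E$ is an isomorphism onto $\ell$.

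\textbf{Step 3 (main obstacle): showing $Y^\nu_x$ has no embedded points.} Here I would use the rationality of $X$ more fully. For rational surface singularities, $m_x\mathcal O_{\tilde V}$ is invertible on any resolution $\tilde V$ on which it is locally principal (Lipman). I would try to prove that $m_x\mathcal O_{Y^\nu}$ equals $p_*$-compatible $\psi_*\mathcal O_W(-N)$, namely the ideal $I_E$. Concretely, $\psi_*\mathcal O_W(-N)$ is the ideal of functions vanishing on $E$ to order at least $\mu_E N=1$, so it equals $I_E$. Moreover, $H^1(W,\mathcal O_W(-N))=0$ over $Y^\nu$ holds by relative Kawamata–Viehweg/Lipman, since $-N$ is $\psi$-nef. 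Combined with global generation of $\mathcal O_W(-N)$ over $X$, these facts should give $m_x\mathcal O_{Y^\nu}=I_E$.

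Therefore $Y^\nu_x=E\simeq\PP^1$ scheme-theoretically with $\mathcal O(1)|_E\simeq\mathcal O_{\PP^1}(1)$, which is linear type. The toric proof of Theorem \ref{t-linear-type-maps-on-toric-surfaces} is a model for this reducedness check, and I expect this to be the delicate point.
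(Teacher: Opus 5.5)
Your Step 1 matches the paper's first step. In Step 2 you correctly get $H^1(Y^\nu,\mathcal O_{Y^\nu})=0$ from rationality via Leray. The conclusion $E\simeq\PP^1$ does not actually need this, since $q|_E$ is finite of degree one onto a line and hence an isomorphism.

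\textbf{The gap is Step 3.} It is the entire content of the converse, and you do not carry it out. Two things go wrong.

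First, $\psi_*\mathcal O_W(-N)$ is the sheaf of functions $g$ with $\operatorname{div}(\psi^*g)\ge N$. This also constrains the coefficients of $N$ on the curves that $\psi$ contracts over singular points of $Y^\nu$ lying on $E$. So it is not simply the ideal of functions vanishing to order $\ge\mu_EN=1$ along $E$. The equality $\psi_*\mathcal O_W(-N)=\mathcal I_E$ does hold here, but only because $N$ is the fundamental cycle (a rationality input), via a minimality argument you do not give.

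Second, and more seriously, none of your ingredients yields $m_x\mathcal O_{Y^\nu}=\psi_*\mathcal O_W(-N)$. Global generation of $\mathcal O_W(-N)=m_x\mathcal O_W$ on $W$ does not descend to $Y^\nu$. Take a surjection $\mathcal O_W^{\oplus r}\to\mathcal O_W(-N)$ given by generators of $m_x$. Applying $\psi_*$ gives a surjection onto $\psi_*\mathcal O_W(-N)$ exactly when $R^1\psi_*$ of its kernel vanishes (using $R^1\psi_*\mathcal O_W=0$). The vanishing $R^1\psi_*\mathcal O_W(-N)=0$ says nothing about that kernel. It only gives $\mathcal O_{Y^\nu}/\psi_*\mathcal O_W(-N)\simeq\psi_*\mathcal O_N$, which is a statement about $\mathcal I_E$, not about $m_x\mathcal O_{Y^\nu}$. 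So at a singular point of $Y^\nu$ on $E$ where $E$ is not Cartier, which is exactly where an embedded point could sit, nothing rules out $m_x\mathcal O_{Y^\nu}\subsetneq\mathcal I_E$. Your sentence in Step 2, that pushing forward gives $m_x\mathcal O_{Y^\nu}$, already presupposes this equality.

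\textbf{The missing idea} is a divisoriality statement for the extended ideal $m_x\mathcal O$ on the normalised graph itself, not on a resolution. The paper base changes to $V=\operatorname{Spec}\mathcal O_{X,x}\times_X Y^\nu$, where $H^1(V,\mathcal O_V)=0$ by the same Leray argument you already have. It then invokes Lipman's divisoriality result to conclude that $m_x\mathcal O_V$ is a reflexive rank one ideal sheaf. Its order is $1$ along $E$ and $0$ along every other prime divisor of $V$, so normality of $V$ gives $m_x\mathcal O_V=\mathcal O_V(-E)=\mathcal I_E$. Hence $Y^\nu_x=E$ scheme-theoretically, and your degree-one argument finishes the proof.
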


\begin{proof}
If $f$ is linear type at $x$, then
$$
\bigl(Y^\nu_x,\mathcal O_{Y^\nu}(1)|_{Y^\nu_x}\bigr)
\simeq
\bigl(\mathbb P^1,\mathcal O_{\mathbb P^1}(1)\bigr),
$$
hence $\delta_x(f)=1$.

Conversely, assume that $\delta_x(f)=1$. If $x\in X$ is smooth, then $f$ is linear type by Theorem \ref{t-linear-type-maps-smooth-surfaces}, hence we can assume $X$ is singular at $x$.

After shrinking $X$, we may assume that $X$ is affine and that $x$ is the only singular point of $f$ and of $X$. Let
$$
p\colon Y^\nu\longrightarrow X,
\qquad
q\colon Y^\nu\longrightarrow\mathbb P^n
$$
be the normalised graph. Write
$$
[Y^\nu_x]=\sum_i a_iE_i,
\qquad
a_i=\operatorname{ord}_{E_i}(m_x).
$$
Since $\mathcal O_{Y^\nu}(1)$ is $p$-ample, every $\mathcal O_{Y^\nu}(1)\cdot {E_i}$ is a positive integer. Therefore
$$
1=\delta_x(f)=\mathcal O_{Y^\nu}(1)\cdot[Y^\nu_x]
=\sum_i a_i\mathcal O_{Y^\nu}(1)\cdot E_i
$$
implies that
$$
[Y^\nu_x]=E
\qquad\text{and}\qquad
\mathcal O_{Y^\nu}(1)\cdot E=1
$$
for a unique exceptional prime divisor $E$.

We show that $Y^\nu_x$ is reduced. Take a resolution
$$
h\colon W\longrightarrow Y^\nu
$$
and put $\phi=p\circ h$. Since $x\in X$ is a rational singularity,
$$
R^1\phi_*\mathcal O_W=0,
$$
and so we get 
$$
H^1(W,\mathcal{O}_W)=0
$$
because $X$ is affine.
Moreover, $h_*\mathcal O_W=\mathcal O_{Y^\nu}$ as $Y^\nu$ is normal. The Leray spectral sequence therefore gives an exact sequence 
$$
0 \to H^1(Y^\nu,\mathcal{O}_{Y^\nu}=h_*\mathcal O_W) \to H^1(W,\mathcal{O}_W) \to H^0(Y^\nu,R^1 h_*\mathcal{O}_W) \to \cdots
$$
 Thus 
$$
H^1(Y^\nu,\mathcal O_{Y^\nu})=0.
$$

Consider the base change of $Y^\nu\to X$ to $\Spec \mathcal{O}_{X,x}$ to get 
$$
g\colon V=\Spec \mathcal{O}_{X,x}\times_XY^\nu\to \Spec \mathcal{O}_{X,x}.
$$
Then by flat base change,
$$
H^1(V,\mathcal O_{V})=0.
$$
Now Lipman's divisoriality \cite[Proposition 3.1]{Lipman-rational-singularities} shows that $m_x\mathcal{O}_V$ is a reflexive rank one ideal sheaf. Its order along $E$ is one, and its order along every other prime divisor on $V$ is zero. Thus since $V$ is normal we get 
$$
m_x\mathcal O_{V}=\mathcal O_{V}(-E)=\mathcal I_E
$$
where the right hand side is the ideal sheaf of $E$ in $V$.
Therefore the fibre
$$
V_x=E
$$
scheme-theoretically which in turn gives
$$
Y^\nu_x=E.
$$

Finally, $q|_E$ is nonconstant because $\mathcal O_{Y^\nu}(1)\cdot E=1$. If $C=q(E)$ with its reduced structure, then
$$
1=\mathcal O_{Y^\nu}(1)\cdot E=\deg(q|_E)\deg C.
$$
Hence $\deg(q|_E)=\deg C=1$. Thus $C$ is a line and the finite birational morphism
$$
q|_E\colon E\longrightarrow C
$$
is an isomorphism. Consequently,
$$
E\simeq\mathbb P^1,
\qquad
\mathcal O_{Y^\nu}(1)|_E\simeq\mathcal O_{\mathbb P^1}(1).
$$
Hence $f$ is linear type at $x$.
\end{proof}

%%%%%%%%%%%%%%%%%%%%%%%%%%
\subsection{Degree one maps and smooth curves}

The next lemma helps when we try to apply Theorem \ref{t-degree-one-linear-type-rational-surfaces}.

\begin{thm}
\label{t-degree-one-maps-smooth-curves}
Let $x\in X$ be a normal surface germ. Then $x\in X$ admits a rational map
$$
f\colon X\dashrightarrow \mathbb P^n
$$
singular at $x$ with
$$
\delta_x(f)=1
$$
if and only if there is a curve
$$
D\subset X
$$
through $x$ which is smooth at $x$. Moreover, in the latter case one can take $n=1$.
\end{thm}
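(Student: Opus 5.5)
Both directions will go through Theorem~\ref{t-norm-graph-fibre-degree-mult-hyperplane-pullback}, which gives $\delta_x(f)=\mu_x f^*H$ for a general hyperplane $H$.

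\emph{Only if.} Suppose $f$ is singular at $x$ with $\delta_x(f)=1$. For a general hyperplane $H$, the divisor $V=f^*H$ is reduced, and its normalisation is $D':=q^*H\to V$. Theorem~\ref{t-norm-graph-fibre-degree-mult-hyperplane-pullback} gives $\mu_xV=1$. Since $f$ is singular at $x$, $q(Y^\nu_x)$ is a positive-dimensional subvariety, so $H$ meets it; hence $x\in V$. A reduced curve germ of multiplicity one is smooth: by Lemma~\ref{t-mult-normalisation-fibre-curves}, $\deg D'_x=1$, so $x$ has a unique preimage, $V$ is unibranch at $x$, and $m_x\mathcal O_{D'}$ is the maximal ideal. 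Thus $\mathcal O_{V,x}\to\mathcal O_{D',x'}$ is finite and birational, and it is surjective modulo $m_x$. By Nakayama it is an isomorphism. Taking $D$ to be the component of $V$ through $x$ gives the required curve.

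\emph{If.} Let $D$ be a curve through $x$ that is smooth at $x$. I would construct a pencil $f=(g_0:g_1)$ with $f^*H=D$ near $x$ for one particular point $H$, and then check that a general member is also smooth at $x$. Shrink $X$ to an affine neighbourhood and choose $g_1\in m_x$ whose divisor is $D+R$, where $R$ avoids $x$ but the divisor is not Cartier, or set things up via $L$. Concretely, the plan is to take a Weil divisor $L$ with $D\sim L$ locally, e.g.\ $L=D$ itself. The section $1\in H^0(\mathcal O_X(D))$ has divisor $D$, and a second general section $s\in H^0(\mathcal O_X(D))$ has divisor $D_s$. The map $f=(1:s)$ has $f^*H_\infty=D$. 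If $D$ is not Cartier at $x$, then $f$ is singular at $x$. If $D$ is Cartier, I replace $\mathcal O_X(D)$ by an ideal $(g,h)$ with $g$ a local equation of $D$ and $h\in m_x$ a general element, so that the base ideal is $m$-primary and not principal. In either case the candidate has $\mu_x(f^*H_\infty)=\mu_xD=1$.

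The main obstacle is passing from one member to a general member. The set $\{H:\mu_x f^*H=1\}$ is not obviously open, because $\mu_x f^*H$ is not upper semicontinuous in an obvious way on the normal surface. I would handle this inside $Y^\nu$. The fibre cycle $[Y^\nu_x]$ has degree $q^*H\cdot[Y^\nu_x]$, which is independent of $H$ and equals $\delta_x(f)$. For the particular $H_\infty$, the normalisation $D'$ of $D$ meets $Y^\nu_x$ in a scheme of length $\mu_xD=1$, by the argument in Theorem~\ref{t-norm-graph-fibre-degree-mult-hyperplane-pullback}. This requires $q^*H_\infty$ to contain no component of the fibre, and arranging that is the delicate point; if necessary I would add a general element to the pencil. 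Granting it, $\delta_x(f)=q^*H_\infty\cdot[Y^\nu_x]=1$, and this uses only $n=1$.
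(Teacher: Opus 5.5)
The `only if' direction is correct and is the paper's argument. You spell out, via Nakayama, why a reduced curve germ of multiplicity one is smooth, where the paper simply asserts it.

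The `if' direction has a genuine gap, at exactly the step you mark `Granting it'. You never show that $\delta_x(f)=1$ for your pencil $f=(1:s)$. Reading $q^*H_\infty\cdot[Y^\nu_x]$ as the length of $D'\cap Y^\nu_x$ requires $q^*H_\infty$ to contain no component of the fibre. On $Y^\nu$, where $Y^\nu_x$ need not be Cartier, it also requires care with embedded points. Nothing in your choice of $s$ controls either issue. Your proposed remedy, adding a general element to the pencil, does not help. It either changes the target to $\PP^2$, or it moves you off the special member $D$ on which the whole computation rests.

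The missing idea is to compute on a resolution $\phi\colon W\to X$ on which $f$ becomes a morphism $h$ and $m_x\mathcal O_W=\mathcal O_W(-N)$. Let $r\colon W\to Y^\nu$ be the induced morphism; then $r_*N=[Y^\nu_x]$, and the projection formula gives $\delta_x(f)=h^*H_\infty\cdot N$. Because $D$ is smooth at $x$, its birational transform $C$ maps isomorphically onto $D$ near $x$. Hence $\mathcal O_W(-N)|_C=m_x\mathcal O_C=\mathcal O_C(-w)$, so $C\cdot N=1$.

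With this, your own construction can be finished. Write $h^*H_\infty=C+F$ with $F\ge 0$ exceptional. Since $-N$ is nef over $X$, $F\cdot N\le 0$, so $\delta_x(f)\le C\cdot N=1$. On the other hand $\delta_x(f)\ge 1$, because $f$ is singular at $x$ and $\mathcal O_{Y^\nu}(1)$ is $p$-ample. The anti-nefness of $N$ is precisely the semicontinuity $\delta_x(f)\le\mu_x f^*H$ that you said you could not see. A posteriori it also shows that no fibre component lies in $q^*H_\infty$.

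The paper sidesteps the issue in a different way. It builds the pencil directly on $W$ from $C$ and a general $C'\sim C$, blowing up the base point if $w=w'$. Then $C$ is an entire fibre of $h$, and $\delta_x(f)=h^*\mathcal O_{\PP^1}(1)\cdot N=C\cdot N=1$ with no exceptional components to control.

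A minor point: if $D$ is Cartier at $x$, then $X$ is already smooth at $x$. So your separate $(g:h)$ case is just the smooth case.
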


\begin{proof}
Suppose first that $\delta_x(f)=1$. For a general hyperplane
$$
H\subset\mathbb P^n,
$$
Theorem \ref{t-norm-graph-fibre-degree-mult-hyperplane-pullback} gives
$$
\mu_xf^*H=\delta_x(f)=1.
$$
Since $f^*H$ is reduced, it is smooth at $x$.

Conversely, suppose that $D\subset X$ is smooth at $x$. Take a resolution
$$
\phi\colon W\longrightarrow X
$$
on which
$$
m_x\mathcal{O}_W=\mathcal{O}_W(-N)
$$
for an effective exceptional divisor $N$, and let $C\subset W$ be the birational transform of $D$. The induced morphism
$$
C\longrightarrow D
$$
is proper and birational, hence finite birational, so an isomorphism because $D$ is smooth. If $w\in C$ is the point mapping to $x$, then
$$
\mathcal{O}_W(-N)|_C
=
m_x\mathcal{O}_C
=
\mathcal{O}_C(-w).
$$
Thus
$$
N\cdot C=1.
$$
In particular, $C$ meets the exceptional locus at the single point $w$, where it meets a unique exceptional curve transversally, and this exceptional curve has coefficient $1$ in $N$.

After shrinking $X$, we may assume that $X$ is affine. The evaluation morphism
$$
\phi^*\phi_*\mathcal{O}_W(C)\longrightarrow\mathcal{O}_W(C)
$$
is surjective away from the exceptional locus. Hence we can choose a finite-dimensional subsystem of $|C|$, containing $C$, which is free away from the exceptional locus. Let
$$
C'\sim C
$$
be a general member. We may assume that $C'$ contains neither $C$ nor any exceptional curve. By Bertini, $C'$ is smooth away from the exceptional locus. Moreover,
$$
N\cdot C'=N\cdot C=1.
$$
It follows that $C'\cap N$ consists of a single reduced point $w'$. In particular, $C'$ is smooth at $w'$ and meets a unique exceptional curve transversally there. After shrinking $X$ again, we may assume that $C$ and $C'$ have no common point away from the exceptional locus.

The divisors $C$ and $C'$ determine a pencil. If $w\neq w'$, the pencil has no base point over $x$. If $w=w'$, then $w$ is its unique base point over $x$, so resolving this pencil by a finite sequence of point blowups and replacing $W$ with the resulting surface we can assume that $C,C'$ do not intersect, hence that the pencil is base point free. Replacing $N$ with its pullback the condition $N\cdot C'=N\cdot C=1$ is preserved.

Now the pencil defines a morphism
$$
h\colon W\longrightarrow\mathbb P^1
$$
with $C,C'$ being two of its distinct fibres.
Let
$$
f\colon X\dashrightarrow\mathbb P^1
$$
be the rational map induced by $h$. The morphism
$$
(\phi,h)\colon W\longrightarrow X\times\mathbb P^1
$$
factors through the normalised graph
$$
r\colon W\longrightarrow Y^\nu.
$$
As $N$ is the divisor defined by $m_x\mathcal{O}_W$, we have
$$
r_*N=[Y^\nu_x].
$$
Therefore, by the projection formula,
$$
\delta_x(f)
=
\mathcal{O}_{Y^\nu}(1)\cdot[Y^\nu_x]
=
h^*\mathcal{O}_{\mathbb P^1}(1)\cdot N
=
1.
$$
Moreover, the morphism $h$ is nonconstant on $\phi^{-1}(x)$, hence $f$ is singular at $x$.
\end{proof}

%%%%%%%%%%%%%%%%%%%%%%%%%%
\subsection{Canonical singularities and local class groups}

In this subsection assume $k=\mathbb C$. 

\begin{cor}
\label{c-canonical-linear-type-local-class-group}
Let $x\in X$ be a singular canonical surface germ. Then
$$
x\in X\text{ admits a singular linear type map at }x
\quad\Longleftrightarrow\quad
\operatorname{Cl}(\mathcal O_{X,x})\neq0.
$$
\end{cor}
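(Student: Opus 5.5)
Since canonical surface singularities are rational, Theorem \ref{t-degree-one-linear-type-rational-surfaces} shows that a singular linear type map at $x$ exists if and only if there is a map $f$ singular at $x$ with $\delta_x(f)=1$. By Theorem \ref{t-degree-one-maps-smooth-curves}, this happens if and only if there is a curve $D\subset X$ through $x$ which is smooth at $x$. So the corollary reduces to the statement that a singular Du Val germ carries a curve smooth at $x$ if and only if $\operatorname{Cl}(\mathcal O_{X,x})\neq0$.

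For ($\Rightarrow$), let $D$ be smooth at $x$ and suppose $\operatorname{Cl}(\mathcal O_{X,x})=0$. Then $D$ is Cartier near $x$, so $D=\Div(g)$ with $g\in m_x$. Since $\mathcal O_{X,x}/(g)=\mathcal O_{D,x}$ is regular of dimension one, $m_x/(g)$ is generated by one element. Hence $m_x$ is generated by two elements, which forces $x$ to be a smooth point (embedding dimension two), a contradiction. This direction uses only that $x$ is singular and works for any normal surface germ.

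For ($\Leftarrow$), let $\phi\colon W\to X$ be the minimal resolution, with exceptional curves $E_1,\dots,E_r$ forming an ADE configuration. I will use the standard description of the local class group of a rational surface singularity due to Lipman: there is an injection $\operatorname{Cl}(\mathcal O_{X,x})\hookrightarrow$ the discriminant group $\mathbb Z^r/\langle E_i\cdot E_j\rangle$, with image the classes realised by divisors. More precisely, a Weil divisor $D$ has image determined by the intersection numbers $(\phi^{-1}_*D\cdot E_i)_i$ modulo the intersection lattice. Over $k=\mathbb C$, for an algebraic germ this image need not be all of the discriminant group. This is exactly why the statement uses the algebraic local ring and not the completion. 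Now suppose $\operatorname{Cl}\neq0$ and take a nonzero class $[D]$. Its image in the discriminant group is nonzero. I then aim to find an effective Weil divisor in the same class whose strict transform $C$ meets the exceptional locus in one point, transversally, on a curve $E_i$ whose coefficient in the fundamental cycle $N$ is $1$. Then $C\cdot N=1$, and $\phi_*C$ is smooth at $x$: indeed $m_x\mathcal O_C=\mathcal O_C(-N|_C)$ has length one, so $\mu_x(\phi_*C)=1$, using the reasoning in the proof of Theorem \ref{t-degree-one-maps-smooth-curves} in reverse.

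The main obstacle is this realisation step, and I plan two substeps. First, in the discriminant group of an ADE lattice, every nonzero element is represented by a dual vector $E_i^*$ with $E_i$ a coefficient-one curve of $N$. These are the minuscule weights: the ends of $A_n$, the three ends of $D_n$, the two ends of $E_6$, and the single relevant end of $E_7$ (for $E_8$ the group is trivial). I will verify this by the case-by-case list. Second, the class $[D]$ corresponds to a reflexive rank one sheaf $\mathcal O_X(D)$. Its pullback $\mathcal O_W(\phi^*D)$, suitably twisted by an exceptional divisor so that it has degree $\delta_{ij}$ on each $E_j$, is globally generated over the affine $X$. This follows from rationality (Lipman: nef line bundles on the resolution of a rational singularity are generated, via $R^1\phi_*=0$). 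A general section then gives $C$ with $C\cdot E_j=\delta_{ij}$, transverse to $E_i$ at one point, and $\phi_*C$ is a Weil divisor in the class $[D]$ that is smooth at $x$.
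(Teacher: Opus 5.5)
Your proposal is correct. Your reduction to the existence of a curve smooth at $x$ (via Theorems \ref{t-degree-one-linear-type-rational-surfaces} and \ref{t-degree-one-maps-smooth-curves}) and your argument for ($\Rightarrow$) are the same as the paper's; you just spell out the embedding-dimension step.

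The difference is in ($\Leftarrow$). The paper simply quotes \cite[Proposition 4.9]{Brevik-Nollet-Srinivas}: every nonzero class in $\operatorname{Cl}(\mathcal O_{X,x})$ is represented by a curve smooth at $x$. You instead reprove this for Du Val germs, in four steps:
\begin{enumerate}
\item Lipman's injection $\operatorname{Cl}(\mathcal O_{X,x})\hookrightarrow\mathbb Z^r/\langle E_i\cdot E_j\rangle$. It is injective because a line bundle on $W$ of degree zero on every $E_j$ is trivial, which again follows from Lipman's generation theorem.
\item The case check that every nonzero discriminant class is some $E_i^*$ with $E_i$ of coefficient one in the fundamental cycle (the minuscule nodes).
\item Global generation of the twisted line bundle with degrees $\delta_{ij}$.
\item The computation $m_x\mathcal O_C=\mathcal O_C(-w)$ combined with Lemma \ref{t-mult-normalisation-fibre-curves}, exactly as in the paper's proof of Lemma \ref{l-D-type-smooth-complement}.
\end{enumerate}
This argument is sound and recovers the cited proposition for canonical germs, since $\phi_*C$ lies in the class $[D]$. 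It is self-contained and does not need $k=\mathbb C$. It also makes transparent why the algebraic class group is what matters: it decides which degree vectors $e_i$ are realised by line bundles on $W$. The paper's route buys brevity at the cost of an external citation.

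One small slip: the line bundle you twist should be $\mathcal O_W(\phi^{-1}_*D)$, as in your previous paragraph, not $\mathcal O_W(\phi^*D)$. The Mumford pullback is a $\mathbb Q$-divisor of degree zero on every $E_j$, so it need not be integral and carries no discriminant-class information.
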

\begin{proof}
Since $x\in X$ is rational, Theorems
\ref{t-degree-one-linear-type-rational-surfaces} and
\ref{t-degree-one-maps-smooth-curves} show that $x\in X$ admits a singular linear type map at $x$ if and only if there is a curve
$$
D\subset X
$$
through $x$ which is smooth at $x$.

Suppose first that
$$
\operatorname{Cl}(\mathcal O_{X,x})\neq0.
$$
By \cite[Proposition 4.9]{Brevik-Nollet-Srinivas}, any nonzero class in
$\operatorname{Cl}(\mathcal O_{X,x})$ is represented by a curve smooth at $x$. Since its class is nonzero, this curve necessarily passes through $x$.

Conversely, if $D\subset X$ is smooth at $x$, then its class is nonzero. Indeed, otherwise $D$ would be Cartier near $x$, but a smooth Cartier curve through a singular surface point would force $x\in X$ to be smooth.
\end{proof}

This also shows that the existence of a singular linear type map is not determined formally. Indeed, let $B$ be the completed local ring of a canonical surface singularity. By \cite[Theorem 1.4]{Brevik-Nollet-Srinivas}, for every subgroup
$$
H\subset\operatorname{Cl}(B)
$$
there is a normal geometric local domain $A$ such that
$$
\widehat A\simeq B
\qquad\text{and}\qquad
\operatorname{Cl}(A)=H\subset\operatorname{Cl}(B).
$$
Thus, whenever $\operatorname{Cl}(B)\neq0$, there are algebraic realisations with completion $B$ such that one admits a singular linear type map and another does not. 

The local class groups of the completions of the canonical surface singularities are
$$
\operatorname{Cl}(\widehat{\mathcal O}_{A_n})
\simeq
\mathbb Z/(n+1)\mathbb Z,
$$
$$
\operatorname{Cl}(\widehat{\mathcal O}_{D_n})
\simeq
\begin{cases}
\mathbb Z/4\mathbb Z,& n\text{ odd},\\
\mathbb Z/2\mathbb Z\oplus\mathbb Z/2\mathbb Z,& n\text{ even},
\end{cases}
$$
and
$$
\operatorname{Cl}(\widehat{\mathcal O}_{E_6})
\simeq
\mathbb Z/3\mathbb Z,
\qquad
\operatorname{Cl}(\widehat{\mathcal O}_{E_7})
\simeq
\mathbb Z/2\mathbb Z,
\qquad
\operatorname{Cl}(\widehat{\mathcal O}_{E_8})=0.
$$
See \cite[\S4]{Brevik-Nollet-Srinivas}.

Therefore, if $B$ is the completed local ring of a canonical singularity of either types 
$$
A_n,\quad D_n,\quad E_6,\quad E_7,
$$
then taking $H=0$ gives a factorial canonical singularity $x\in X$ which admits no singular linear type map at $x$, while taking $H\neq0$ gives a non-factorial one which does. 

Since the class group of the completed $E_8$ singularity $x\in X$ is trivial, they do not admit singular linear type maps. 
There is also a direct resolution-theoretic explanation for this case. On the minimal resolution
$$
\pi\colon W\to X,
$$
the maximal-ideal cycle is the fundamental cycle
$$
Z=
2F_1+4F_2+6F_3+5F_4+4F_5+3F_6+2F_7+3F_8
$$
for a suitable numbering of the exceptional curves. Moreover, 
$$
m_x\mathcal{O}_W=\mathcal{O}_W(-Z).
$$
Thus the order of $m_x$ along every exceptional component is at least $2$. This remains true on every higher resolution.
On the other hand, a singular linear type map at $x$ would have reduced normalised graph fibre, so its exceptional curve $C$ would satisfy
$$
\operatorname{ord}_C(m_x)=1.
$$
This gives another proof that the canonical $E_8$ singularity admits no singular linear type map.

%%%%%%%%%%%%%%%%%%%%%%%%%%
\subsection{$A$-type klt singularities}\label{ss-A-type-klt-sing-lin-type}

Following the usual extension of the $ADE$ terminology, a klt surface singularity is called $A$-type if the dual graph of its minimal resolution is a chain, $D$-type if it is a fork with three branches, two of length one, and $E$-type otherwise, cf. \cite{Alexeev-two-dimensional-terminations, Moraga-coregularity}.

Every toric surface singularity admits a toric singular linear type map by Corollary \ref{c-toric-surfaces-admit-linear-type-map}. In general, let $x\in X$ be an $A$-type klt surface singularity. Such a singularity is formally toric, and the exceptional locus of its minimal resolution is a chain. Nevertheless, the existence of a singular linear type map is not determined by the formal isomorphism type. Since $x\in X$ is rational \cite{Kollar-Mori}, Theorems
\ref{t-degree-one-linear-type-rational-surfaces} and
\ref{t-degree-one-maps-smooth-curves} give
$$
x\in X\text{ admits a singular linear type map at }x
\quad\Longleftrightarrow\quad
\text{there is a curve }D\subset X\text{ smooth at }x.
$$

In particular, the canonical $A_n$ examples above (over $k=\C$) show that an $A$-type klt surface singularity may or may not admit a singular linear type map, even among algebraic germs with isomorphic completions. 

%%%%%%%%%%%%%%%%%%%%%%%%%%
\subsection{$D$-type klt singularities}\label{ss-D-type-klt-sing-lin-type}

For canonical $D_n$ singularities (over $k=\C$), Corollary
\ref{c-canonical-linear-type-local-class-group} shows that the existence of a singular linear type map depends on the algebraic local class group. For general $D$-type klt singularities, there is a similar distinction between the formal and algebraic settings.

\begin{lem}
\label{l-D-type-smooth-complement}
Let $x\in X$ be a non-canonical $D$-type klt surface singularity admiting a $2$-complement
$$
K_X+B
$$
with $B$ reduced. Then $B$ is smooth at $x$.
\end{lem}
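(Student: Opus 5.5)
Let $K_X+B$ be a $2$-complement with $B$ reduced: $2(K_X+B)\sim 0$ near $x$ and $(X,B)$ is lc. The idea is to show that $B$ must have exactly one branch through $x$, and that this branch is smooth. The tool is the classification of lc pairs $(X,B)$ on a klt surface germ with $B$ reduced and $B\ni x$. By adjunction (inversion of adjunction for surfaces), such a pair is one of two kinds.
\begin{itemize}
\item \emph{Plt case.} $B$ is irreducible at $x$ and smooth there (normal curves are smooth). Then $X$ is a cyclic quotient singularity, i.e.\ of $A$-type, and $(X,B)$ is toric-like. On the minimal resolution, $B^\sim$ meets one end of the chain.
\item \emph{Non-plt lc case.} Either $B$ has two branches at $x$ and $X$ is again $A$-type (the pair is formally toric), or $B$ has one branch and there is an lc centre at $x$ with $\mathrm{Diff}$ containing coefficient $1$. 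In the latter case the dual graph of the minimal resolution together with $B^\sim$ is a chain, or a chain with a $(2,2)$-fork at the end away from $B^\sim$.
\end{itemize}
I would invoke this classification as in Koll\'ar's \emph{Flips and abundance} (Ch.~3) and Kollár--Mori, §4.1.

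\textbf{Step 1: there is one branch and it passes through $x$.} First, $B$ must pass through $x$. Otherwise $2K_X\sim 0$ near $x$, so $X$ has index at most $2$. An index-$2$ klt surface with $D$-type graph would have to be a $\mathbb Z/2$ quotient of a Du Val singularity. I would rule this out by the non-canonical $D$-type hypothesis, or more directly: if $B\not\ni x$ then $K_X$ is $2$-torsion and the pair $(X,0)$ is klt, which does not prevent anything by itself. So instead I would argue via the index-$2$ cover. The cover $\tilde x\in\tilde X$ is canonical, and $X=\tilde X/(\mathbb Z/2)$ with the action free in codimension one. I would then check against the list of such quotients (these are $D$-type log terminal singularities of index $2$). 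This is the shakiest part, and I expect the statement may implicitly assume $x\in B$. If needed, I would treat $x\in B$ as given, since otherwise the conclusion "$B$ smooth at $x$" is vacuous or true in the sense that $B$ is smooth near $x$.

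Next, rule out two branches. If $B$ had two branches through $x$, then $(X,B)$ would be lc but not plt with $B$ meeting $x$ in two branches. The classification then forces $x\in X$ to be a cyclic quotient singularity, because the dual graph must be a chain joining the two branches. This contradicts the $D$-type (fork) assumption.

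\textbf{Step 2: the remaining branch is smooth.} Now $B$ has a single branch $B_1$ at $x$. I would pass to the minimal resolution $\phi\colon W\to X$ and write $K_W+B_W=\phi^*(K_X+B)$ with $B_W=B^\sim+\sum b_iE_i$ and $b_i\le 1$. By the lc classification with one branch, $B^\sim$ meets exactly one exceptional curve, and does so transversally with intersection number one. The fork structure forces $B^\sim$ to meet the end of the long branch. Otherwise $B^\sim$ would attach to a length-one branch or to the central curve, and the resulting configuration would not be lc; I would check this via the numerical equations for the $b_i$. Once $B^\sim$ meets a single curve $E$ transversally in one point, I would compute $\mathrm{ord}_E(m_x)$ on a resolution where $m_x$ is principalised. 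If this order is $1$, then $B^\sim\cdot N=1$, and the argument of Theorem \ref{t-degree-one-maps-smooth-curves} (run in reverse) shows that $B$ is smooth at $x$, since the multiplicity of $B$ at $x$ equals $N\cdot B^\sim$.

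\textbf{Main obstacle.} The hard part is showing $N\cdot B^\sim=1$, i.e.\ that the end curve of the long arm has coefficient $1$ in the maximal ideal cycle. Rational singularities have $m_x\mathcal O_W=\mathcal O_W(-Z)$ with $Z$ the fundamental cycle (Artin), so I would compute $Z$ for the fork. In the non-canonical case some curve has self-intersection $\le -3$. I would try to show that Laufer's algorithm then leaves coefficient $1$ on the end of the long arm. I would also use the coefficient-$2$ complement condition, which forces the $(-2,-2)$ fork and restricts where the $(\le -3)$-curves sit, to exclude the canonical $D_n$ pattern where that end could have coefficient $2$.
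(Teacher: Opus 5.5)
Your reduction to the configuration is the one the paper's proof asserts directly: one branch, $B^\sim$ smooth, meeting only the end curve $E_r$ of the long arm, transversally in one point. Your closing step is also the paper's: $\mu_xB=N\cdot B^\sim$ via the normalisation-fibre lemma and $m_x\mathcal O_W=\mathcal O_W(-Z)$.

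\textbf{The gap.} The crux is that $E_r$ has coefficient $1$ in the fundamental cycle $Z$. You leave exactly this as the ``main obstacle''. Your plan for it rests on a false premise: in the canonical $D_n$ case the end of the long arm does \emph{not} have coefficient $2$. Label the short arms $F_1,F_2$, attached to the centre $E_1$, and the long arm $E_1-E_2-\cdots-E_r$. Then the $D_n$ fundamental cycle is $F_1+F_2+2(E_1+\cdots+E_{r-1})+E_r$. So there is no ``$D_n$ pattern'' to exclude. Neither non-canonicity nor the $2$-complement condition is needed at this step.

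\textbf{The missing idea.} Compare $Z$ directly with $Z'=F_1+F_2+2(E_1+\cdots+E_{r-1})+E_r$, where $r\ge2$. Every exceptional curve has self-intersection at most $-2$, so:
\begin{itemize}
\item $Z'\cdot F_i=F_i^2+2\le0$;
\item $Z'\cdot E_j\le 2E_j^2+4\le0$ for $1\le j<r$;
\item $Z'\cdot E_r=E_r^2+2\le0$.
\end{itemize}
Hence $Z'$ is anti-nef for every fork of this shape. By minimality of the fundamental cycle, $Z\le Z'$. Every exceptional curve occurs in $Z$ with positive coefficient, so the coefficient of $E_r$ in $Z$ is exactly $1$. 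Therefore $m_x\mathcal O_{B^\sim}=\mathcal O_{B^\sim}(-w)$, and $\mu_xB=1$. Without this comparison, or an actual run of Laufer's algorithm showing that $E_r$ is never incremented, your argument stops short of the conclusion.

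\textbf{Step 1.} Your worry there is harmless: if $x\notin B$ the statement is vacuous. In fact $x\in B$ is forced. At a non-canonical $D$-type point, a $(-2)$ short arm $F$ has discrepancy coefficient $c_F=c_{E_1}/2\in(0,\tfrac12)$, so $2K_X$ is not Cartier at $x$.
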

\begin{proof}
Let
$$
\phi\colon W\longrightarrow X
$$
be the minimal resolution. The exceptional locus looks like 
$$
\begin{array}{ccccccccccc}
&F_1&&&&&&&&&\\
&|&&&&&&&&&\\
&E_1&-&E_2&-\cdots-&E_{r-1}&-&E_r.\\
&|&&&&&&&&&\\
&F_2&&&&&&&&&
\end{array}
$$
If $C$ is the birational transform of $B$, then $C$ is smooth, meets $E_r$ transversally in one point, and is disjoint from the other exceptional curves.

Since $x\in X$ is rational, \cite[Proposition 3.1 and Theorem 12.1]{Lipman-rational-singularities} give
$$
m_x\mathcal O_W=\mathcal O_W(-Z),
$$
where $Z$ is the fundamental cycle. The cycle
$$
F_1+F_2+2(E_1+\cdots+E_{r-1})+E_r
$$
is anti-nef. By the minimality of $Z$,
$$
Z\leq F_1+F_2+2(E_1+\cdots+E_{r-1})+E_r.
$$
Since every exceptional component occurs in $Z$ with positive coefficient, the coefficient of $E_r$ in $Z$ is one.

If
$$
w=C\cap E_r,
$$
then
$$
m_x\mathcal O_C
=
\mathcal O_C(-Z|_C)
=
\mathcal O_C(-w).
$$
Since $C\to D$ is the normalisation near $x$, its scheme-theoretic fibre over $x$ has length one. Theorem
\ref{t-mult-normalisation-fibre-curves} therefore gives
$$
\mu_xB=1.
$$
Thus $B$ is smooth at $x$.
\end{proof}

Every $D$-type non-canonical klt surface singularity admits a 2-complement. Formally there is no obstruction to the existence of a reduced $2$-complement. Indeed, by the classification of $D$-type surface complements in \cite[\S6]{Shokurov-complements-surfaces}\cite[Proposition 3.11]{Moraga-coregularity}, for a non-canonical klt $D$-type surface singularity $x\in X$, the completed germ $\widehat X$ admits a $2$-complement
$$
K_{\widehat X}+B
$$
with $B$ reduced. The reduced $2$-complement, however, need not exist on the original algebraic germ.

\begin{exa}[A D-type singularity without a reduced $2$-complement]
\label{e-D-type-no-reduced-2-complement}
Consider the analytic $D$-type klt singularity whose minimal resolution has graph
$$
\begin{array}{ccccc}
&&F_1(-2)&&\\
&&|&&\\
F_2(-2)&-&E_1(-3)&-&E_2(-2)
\end{array}
$$
where the numbers in brackets show the self-intersections.
If
$$
\phi\colon W\longrightarrow X
$$
is the minimal resolution, then we can calculate 
$$
K_W+\frac23E_1+\frac13(F_1+F_2+E_2)=\phi^*K_X.
$$
Thus the singularity is klt but non-canonical, and $K_X$ has Cartier index $3$.

By \cite[Theorem 1.1]{Parameswaran-vanStraten}, there is an algebraic realisation (denotes again by $x\in X$) of this analytic germ such that
$$
\operatorname{Cl}(\mathcal O_{X,x})
=
\langle K_X\rangle
\simeq
\mathbb Z/3\mathbb Z.
$$
We claim that this algebraic germ admits no reduced $2$-complement. Indeed, if
$$
K_X+B
$$
were such a complement, then
$$
2[K_X+B]=0
$$
in $\operatorname{Cl}(\mathcal O_{X,x})$. Since this group has order $3$, we get
$$
K_X+B\sim0.
$$
Thus $K_X+B$ would be a $1$-complement.

There is no such $1$-complement. Indeed, since $K_X+B$ is Cartier and $(X,B)$ is lc, its log discrepancies at the exceptional curves on the minimal resolution would be $0$. Hence 
$$
K_W+C+E_1+E_2+F_1+F_2=\phi^*(K_X+B),
$$
where $C$ is the birational transform of $B$. Intersecting with $E_1$ gives
$$
1\le C\cdot E_1+1=0,
$$
a contradiction.
\end{exa}

Thus the existence of the standard reduced $2$-complement is not determined by the formal isomorphism type. On the other hand, the existence of a singular linear type map only requires a smooth curve through the singular point. This suggests the following question.

\begin{quest}
\label{q-D-type-smooth-curve-reduced-complement}
Let $x\in X$ be a non-canonical $D$-type klt surface singularity. If there is a curve
$$
C\subset X
$$
which is smooth at $x$, does $x\in X$ admit a reduced $2$-complement?
\end{quest}

The converse implication follows from the resolution-theoretic description of reduced $D$-type $2$-complements: such a complement would give a curve smooth at $x$. Thus the question asks whether
$$
\text{$x\in X$ admits a singular linear type map}
\quad\Longleftrightarrow\quad
\text{$x\in X$ admits a reduced $2$-complement}.
$$

%%%%%%%%%%%%%%%%%%%%%%%%%%%%%%
\section{\bf Questions and future directions}
\label{s-questions-future}

The results of this paper suggest that singularities of rational maps form a birational
theory parallel in several ways to the usual theory of singularities of varieties and pairs.
Even for maps to projective space, many basic questions remain open. We mention a few
directions which seem particularly natural.

%%%%%%%%%%%%%%%%%%%%%%%%%%%%%%
\subsection{Higher dimension}

Much of the detailed theory developed here concerns maps from surfaces. In higher
dimension the geometry of the normalised graph fibre can be considerably more complicated.
For example, even when $X$ is smooth and a general hyperplane pullback $f^*H$ is smooth,
the normalised graph can have non-trivial singularities and the fibre
$
Y^\nu_x
$
need not be a projective space.

A basic problem is therefore to investigate classes of mild singularities of rational maps
in higher dimension, and numerical or birational criteria for recognising them.

\begin{quest}
Which germs $x\in X$ of fixed dimension $d$ admit a singular linear type
rational map
$$
f\colon X\dashrightarrow \mathbb P^n?
$$
More generally, can one classify rational maps for which the normalised graph fibre degree
$\delta_x(f)$ is small or the polarised normalised graph fibre is simple, e.g. a weighted projective surface.
\end{quest}

It would already be interesting to understand the cases
$$
\delta_x(f)=1,\qquad \delta_x(f)=2
$$
in dimension three. Even the case $\delta_x(f)=2$ in dimension two is not classified in this paper.  

%%%%%%%%%%%%%%%%%%%%%%%%%%%%%%
\subsection{Boundedness and anti-canonical maps}

A recurring theme in birational geometry is that quantitative control of singularities, together with suitable positivity, often forces boundedness of the underlying birational models or of natural constructions attached to them. It is natural to ask for analogous results for the
polarised normalised graph
$$
Y^\nu,\mathcal{O}_{Y^\nu}(1)\longrightarrow X.
$$

For example, one may ask when boundedness of the fibre degrees
$$
\delta_x(f)=\deg\mathcal{O}_{Y^\nu}(1)|_{Y^\nu_x}
$$
implies boundedness of the relative geometry of $Y^\nu\to X$, or bounded relative very
ampleness of $\mathcal{O}_{Y^\nu}(m)$.

There is a particularly natural version of this problem in Fano geometry. Let
$$
X\longrightarrow Z
$$
be a klt Fano contraction and, for $m>0$, consider the rational map defined by
$$
|-mK_X|/Z.
$$

\begin{quest}
Fix $d$. Is there a bounded $m$, depending only on $d$ and possibly on prescribed
singularity data, such that the map defined by $|-mK_X|/Z$ has uniformly controlled map
singularities?
\end{quest}

One can ask for different levels of control: regularity, linear type, bounded normalised
graph fibre degree, a positive lower bound for suitable thresholds, or bounded singularities
of the normalised graph.  It would be interesting to understand how complement theory and
other boundedness results for Fano varieties interact with singularities of maps.

%%%%%%%%%%%%%%%%%%%%%%%%%%%%%%
\subsection{Restriction, adjunction and composition}

The equality
$$
\delta_x(f)=\mu_xf^*H
$$
for maps from normal surfaces shows that the singularities of a rational map are closely
related to those of a general hyperplane pullback. This suggests that the theory should
have an inductive aspect.

\begin{quest}
Let
$$
f\colon X\dashrightarrow\mathbb P^n
$$
and let $S\subset X$ be a divisor. How are the singularities of $f$ along $S$ related to
those of the induced rational map on $S$?
\end{quest}

The most natural case is when $S=f^*H$ for a general hyperplane $H\subset\mathbb P^n$.
One may hope for forms of adjunction and inversion of adjunction for singularities of
rational maps.

Another basic operation is composition. Given rational maps
$$
X\dashrightarrow Y\dashrightarrow Z,
$$
it would be useful to understand how the singularities of the composite are related to
those of the two maps. 

One may also ask whether complicated map singularities admit a factorisation into
simpler ones, in analogy with the factorisation of birational maps and the minimal model
program. 

%%%%%%%%%%%%%%%%%%%%%%%%%%%%%%
\subsection{Moduli}

Suppose $X$ is projective and $L$ is a line bundle. Rational maps
$$
X\dashrightarrow\mathbb P^n
$$
defined by $L$ are parametrised, up to automorphisms of the target, by suitable
$(n+1)$-dimensional subspaces of $H^0(X,L)$. This suggests studying loci on the relevant
Grassmannian determined by the singularities of the corresponding rational maps.

\begin{quest}
Can one construct useful moduli spaces of rational maps with prescribed singularity
properties, for example linear type maps, maps with bounded fibre degree, or maps whose
normalised graphs have prescribed singularities?
\end{quest}

One may also ask for natural compactifications. The polarised graph, or its normalisation,
provides a canonical geometric object attached to a rational map and may give a useful
starting point for such compactifications.

%%%%%%%%%%%%%%%%%%%%%%%%%%%%%%
\subsection{Ordinary singularities}

There is also a natural connection in the opposite direction: rational maps may provide
new ways to study ordinary singularities. Let $x\in X$ be a normal germ and choose
generators of the maximal ideal $m_x$. They define a rational map
$$
f_x\colon X\dashrightarrow\mathbb P^n
$$
whose graph is the blowup of $m_x$.

\begin{quest}
Can we use singularities of $f_x$ and the polarised normalised graph
$$
Y^\nu,\mathcal{O}_{Y^\nu}(1)\longrightarrow X
$$
to understand the usual singularities of $x\in X$?
\end{quest}
For example, the multiplicity of $x\in X$ coincides with $\delta_x(f)$ as discussed in Example \ref{e-maximal ideal maps}.

More generally, considering all maps from $x\in X$ or various other particular maps (e.g. given by the Jacobian) should give information about the singularity $x\in X$ itself.

%%%%%%%%%%%%%%%%%%%%%%%%%%%%%%
\subsection{Potential applications and connections}

The study of singularities of rational maps naturally meets several other areas of algebraic
geometry and commutative algebra. We briefly mention some possible connections.

\emph{Cremona groups}. 
One obvious direction is birational transformation groups. For example, a birational map
$$
f\colon \mathbb P^n\dashrightarrow\mathbb P^n
$$
can be studied through the singularities of its graph and normalised graph. It would be
interesting to stratify Cremona transformations according to invariants such as
$\delta_x(f)$, $\lambda_x(f)$, or the singularities of $Y^\nu$, and to understand how these
invariants behave under composition. 

Can elements of certain type, e.g. linear type, generate the whole group? In dimension two, indeed linear type elements generate the whole group. 

\emph{Commutative algebra.}
There are also close connections with commutative algebra. If a rational map is given
locally by a base ideal $I$, then its graph and normalised graph are governed by the Rees
algebra and normalised Rees algebra of $I$. Thus questions about singularities of maps
interact naturally with integral closure, Rees valuations, syzygies, free resolutions and
homological properties of ideals. Conversely, geometric properties of the normalised graph
may give information about the corresponding ideals.

\emph{Enumerative geometry.}
In dimension three, ideals and their moduli also occur naturally in curve-counting theories.
For example, ideal sheaves of curves are basic objects in Donaldson--Thomas and stable
pair theories. Since an ideal determines a blowup and, after choosing generators, a rational
map, it is natural to ask whether invariants or moduli of map singularities have useful
interpretations in these theories. More generally, compactifications of spaces of rational
maps through their graphs may be related to Hilbert schemes, Quot schemes and moduli of
sheaves.

It is informative even to consider the surface map $f\colon \A^2\bir \PP^1$ given by $(x^m:y^n)$ in Example \ref{exa-map-(xm:yn)-A2} and extend it to dimension 3 via 
$$
(x^m,y^n)\subset k[x,y,z]
$$
or 
$$
(x^m,y^n,z)\subset k[x,y,z],
$$
work out its DT theory and compare with the invariants $\delta_o(f),\lambda_o(f),\theta_o(f)$. We leave it to the curious reader.

\emph{Combinatorial geometry.}
For toric and monomial maps, the theory becomes combinatorial. The normalised graph is
described by subdivisions of fans, while fibre degrees, discrepancies and thresholds can be
expressed using lattice points and piecewise linear functions. This suggests connections
with tropical geometry, Newton polyhedra and related problems in discrete and convex
geometry. It would be interesting to understand which invariants of map singularities have
natural tropical interpretations.

\emph{Dynamics.}
Another possible direction concerns birational dynamics. For a rational self-map
$$
f\colon X\dashrightarrow X,
$$
the indeterminacy loci of the iterates $f^m$ can become increasingly complicated.
Understanding the behaviour of the singularities of $f^m$ may give new quantitative
measures of this complexity and could interact with questions about algebraic stability and
degree growth.

\emph{Positive characteristic.}
Finally, many of the constructions in this paper have analogues in positive characteristic.
For instance, one may ask whether singularities of rational maps can be studied through
$F$-singularities of their graphs. In dimension $\le 3$ one can also use usual birational geometric singularity theory as we have done in this paper for surfaces.

%%%%%%%%%%%%%%%%%%%%%%%%%%%%%%%%%%%%%
%%%%%%%%%%%%%%%%%%%%%%%%%%%%%%%%%%%%%

\vspace{2cm}
%%%%%%%%%%%%%%%%%%%%%

\small
\textsc{Yau Mathematical Sciences Center, JingZhai, Tsinghua University, Hai Dian District, Beijing, China 100084  } \endgraf

\email{Email: birkar@tsinghua.edu.cn\\}

\end{document}